\newtheorem{lemma}{Lemma}[section]
\newtheorem{theorem}{Theorem}[section]
\newtheorem{proposition}{Proposition}[section]
\numberwithin{equation}{section}
\newcommand{\be}{\begin{equation}}
\newcommand{\bm}{\begin{multline}}
\newcommand{\ee}{\end{equation}}
\newcommand{\dis}{\displaystyle}
\newcommand{\R}{\mathbb{R}}
\renewcommand{\S}{\mathbb{S}}
\newcommand{\T}{\mathbb{T}}
\newcommand{\CF}{\mathcal{F}}
\newcommand{\ga}{\gamma}
\newcommand{\de}{\delta}
\newcommand{\si}{\sigma}
\newcommand{\Ga}{\Gamma}
\begin{document}

\title[An $L^1_k\cap L^p_k$ approach for the non-cutoff Boltzmann equation in $\R^3$]{An $L^1_k\cap L^p_k$ approach for the non-cutoff Boltzmann equation in $\R^3$}

\author[R.-J. Duan]{Renjun Duan}
\address[RJD]{Department of Mathematics, The Chinese University of Hong Kong,
Shatin, Hong Kong, P.R.~China}
\email{rjduan@math.cuhk.edu.hk}

\author[S. Sakamoto]{Shota Sakamoto}
\address[SS]{Department of Mathematics,
Tokyo Institute of Technology,
Tokyo, 980-0856, Japan}
\email{sakamoto.s.aj@m.titech.ac.jp}

\author[Y. Ueda]{Yoshihiro Ueda}
\address[YU]{Graduate School of Maritime Sciences,
Kobe University,
Kobe, 658-0022, Japan}
\email{ueda@maritime.kobe-u.ac.jp}

\begin{abstract}
In the paper, we develop an $L^1_k\cap L^p_k$ approach to construct global solutions to the Cauchy problem on the non-cutoff Boltzmann equation near equilibrium in $\R^3$. In particular, only smallness of $\|\CF_x{f}_0\|_{L^1\cap L^p (\R^3_k;L^2(\R^3_v))}$ with $3/2<p\leq \infty$ is imposed on initial data $f_0(x,v)$, where $\CF_x{f}_0(k,v)$ is the Fourier transform in space variable. This provides the first result on the global existence of such low-regularity solutions without relying on Sobolev embedding $H^2(\R^3_x)\subset L^\infty(\R^3_x)$ in case of the whole space.    Different from the use of sufficiently smooth Sobolev spaces in those classical results \cite{GS} and \cite{AMUXY-2012-JFA}, there is a crucial difference between the torus case and the whole space case for low regularity solutions under consideration. In fact, for the former, it is enough to take the only $L^1_k$ norm corresponding to the Weiner space as studied in \cite{DLSS}. In contrast, for the latter, the extra interplay with the $L^p_k$ norm plays a vital role in controlling the nonlinear collision term due to the degenerate dissipation of the macroscopic component. Indeed, the propagation of $L^p_k$ norm helps gain an almost optimal decay rate 
$ (1+t)^{-\frac{3}{2} (1-\frac{1}{p})_+}$ of the $L^1_k$ norm via the time-weighted energy estimates in the spirit of the idea of \cite{KNN} and in turn, 
this is necessarily used for establishing the global existence.
\end{abstract}

\date{\today}
\subjclass[2020]{35Q20}
\keywords{Boltzmann equation, angular non-cutoff, low regularity solutions, global existence, large time behavior}

\maketitle

\thispagestyle{empty}
\setcounter{tocdepth}{3}
\tableofcontents

\section{Introduction}

We consider the Cauchy problem on the non-cutoff Boltzmann equation in the whole space 
\begin{align}\label{eq: Boltzmann}
\begin{cases}
\partial_t F+v\cdot \nabla_x F=Q(F,F), 
\\
F(0,x,v)=F_0(x,v).
\end{cases}
\end{align}
Here the unknown $F=F(t,x,v)\geq 0$ denotes the density distribution function of gas particles with position  $x\in \mathbb{R}^3$ and velocity $v\in \mathbb{R}^3$ at time $t\geq 0$ and $F_0(x,v)$ is the given initial data. The bilinear collision operator $Q$ is defined as
\begin{align*}
Q(G,F)(v)=\int_{\R^3}\int_{\S^2}B(v-u,\si)
\left[G(u')F(v')-G(u)F(v)\right]\,d\si
d u,
\end{align*}
where the velocity pairs $(v,u)$ and $(v',u')$  satisfy
\begin{align*}
v'=\frac{v+u}{2}+\frac{|v-u|}{2}\sigma,\
u'=\frac{v+u}{2}-\frac{|v-u|}{2}\sigma,\quad \sigma\in \mathbb{S}^2.
\end{align*}
We assume that the collision kernel $B$ takes the form
\begin{equation}\label{ass.ck}
\left\{\begin{aligned}
B(v-u,\sigma)&=\vert v-u\vert^\gamma b(\cos\theta),\ & \gamma\in (-3,1],\\
\cos \theta&=\frac{v-u}{\vert v-u\vert}\cdot \sigma, \ & \theta \in (0,\pi/2],\\
\sin \theta b(\cos\theta)&\simeq \theta^{-1-2s}\ \text{as }\theta\to 0,
\ & 0<s<1.
\end{aligned}\right.
\end{equation}
We note that $Q$ has been symmetrized in a standard way such that $B$ is supported in $0<\theta\leq \pi/2$, cf.~\cite{Gla, Vil98}.
In this paper, we are concerned with the hard and moderately soft potential cases with a restriction
\begin{align}\label{ass.add}
\gamma>\max\{ -3, -3/2-2s\},
\end{align}
that is needed only in the use of a technical trilinear estimate \eqref{ineq:Trilinear}, cf.~\cite[Theorem 1.2]{AMUXY-KRM-2013}.

Let $\mu=\mu(v):=(2\pi)^{-3/2}e^{-\vert v\vert^2/2}$ be a normalized global Maxwellian as a reference equilibrium. We insert $F(t,x,v)=\mu+\mu^{1/2}f(t,x,v)$ into \eqref{eq: Boltzmann} and derive the reformulated Cauchy problem on $f=f(t,x,v)$ as follows
\begin{align}\label{BE}
\begin{cases}
\partial_t f+v\cdot \nabla_x f+Lf=\Gamma(f,f),\\
f(0,x,v)=f_0(x,v),
\end{cases}
\end{align}
where the linearized collision operator $L$ and the nonlinear operator $\Gamma$ are respectively given by 
\begin{align}
Lf&=-\mu^{-1/2}[Q(\mu, \mu^{1/2}f)+ Q(\mu^{1/2}f, \mu)],
\label{LinearOp}\\
\Gamma(f,f)&=\mu^{-1/2}Q(\mu^{1/2}f, \mu^{1/2}f).\notag
\end{align}

Let us define some norms to describe a preceding result on a solution of \eqref{BE} constructed in terms of the Wiener space when the space domain is a torus, and also introduce some other norms to further present our main results of this paper afterward. First, let  $\Vert \cdot \Vert_{L^p}$ denote the usual $L^p$-norm.
We also use a weighted $L^2$ space endowed with the norm
\begin{align*}
\Vert f\Vert_{L^2_\alpha}^2=\int_{\mathbb{R}^3} \vert \langle v\rangle^\alpha f(v)\vert^2 dv,\quad 
 \langle v\rangle:=\sqrt{1+|v|^2},
\end{align*}
for $\alpha\in\mathbb{R}$. The Bochner spaces such as $L^p_t L^q_v$ are endowed with the norm $\Vert \Vert \cdot \Vert_{L^q_v} \Vert_{L^p_t}$. Let $\Vert \cdot \Vert_{L^2_{v,D}}$ be the energy dissipation norm defined by \cite{AMUXY-2011-CMP}, which takes the form
\begin{align*}
\Vert f \Vert_{L^2_{v,D}}^2&=\iiint_{\mathbb{R}^3\times \mathbb{R}^3\times \mathbb{S}^2} B \mu(u) (f(v')-f(v))^2 dvdu d\sigma \\
&+\iiint_{\mathbb{R}^3\times \mathbb{R}^3\times \mathbb{S}^2} B f(u)^2 (\mu(v')^{1/2}-\mu(v)^{1/2})^2 dvdu d\sigma.
\end{align*}
By $L^p_k$ it means that we take the $L^p$-norm on the Fourier side. Especially we use the following norms
\begin{align*}
\Vert f\Vert_{L^1_k L^\infty_T L^2_v} &=\int_{\hat{\Omega}_k} \sup_{0\le t\le T} \|\hat{f}(t,k,\cdot)\|_{L^2_v} dk,\\
\Vert f\Vert_{L^1_k L^2_T L^2_{v,D}} &=\int_{\hat{\Omega}_k}  \Big(\int^T_0   \Vert \hat{f}(t,k,\cdot)\Vert_{L^2_{v,D}}^2  dt\Big)^{1/2} dk,
\end{align*}
for any $T>0$,  where we have defined the usual Fourier transform $\hat{f}(t,k,v)=\mathcal{F}_x [f(t,\cdot,v)](k)$ with $\hat{\Omega}_k=\mathbb{R}^3_k$ if $\Omega=\mathbb{R}^3_x$ while $\hat{\Omega}_k=\mathbb{Z}^3_k$ if $\Omega=\mathbb{T}^3$, and $dk$ denotes the corresponding Lebesgue measure on each space. Note that in the case of a torus,  $L^1_k$ is the so-called Wiener space, usually denoted by $A(\mathbb{T}^3)$ in the literature, cf.~\cite[page 200]{Gra}.

For the case, $\Omega=\mathbb{T}^3$, the first and second authors with their collaborators have obtained the unique global existence of a solution with an exponential convergence rate. To be precise, we state part of the corresponding results as follows.

\begin{proposition}[\cite{DLSS}]\label{DLSS Existence}
Let $\Omega=\mathbb{T}^3$.  
Let a weight function $w_{q}$ be defined as
\begin{align*}
w_{q}(v)
=
\begin{cases}
1 \quad &(\gamma+2s\ge 0),\\
\displaystyle \exp\Big( \frac{q}{4}\langle v\rangle\Big),\ q>0 \quad & (\gamma+2s<0,\ \gamma>\max\{-3, -3/2-2s\}).
\end{cases}
\end{align*}
Assume that $f_0(x,v)$ satisfies the conservation laws of mass, 
momentum, 
and energy, namely, $\int_{\T^3}\int_{\R^3}\phi(v)f_0(x,v)dvdx=0$ for $\phi(v)=1,v,|v|^2$. 
Then there is $\epsilon_0>0$  such that if $F_0(x,v)=\mu+\mu^{\frac{1}{2}}f_0(x,v)\ge 0$ and
\begin{align*}
\Vert  w_q  f_0\Vert_{L^1_k L^2_v} \le \epsilon_0,
\end{align*}
then there exists a unique global mild solution $f(t,x,v)$, $t>0$, $x\in \mathbb{T}^3$, $v\in \mathbb{R}^3$ to the Cauchy problem 
\eqref{BE}, satisfying that  $F(t,x,v)=\mu+\mu^{\frac{1}{2}}f(t,x,v)\ge 0$ and
\begin{align*}
\| w_q f\|_{L^1_kL^\infty_TL^2_v}+\| w_q f\|_{L^1_kL^2_TL^2_{v,D}}\lesssim
\Vert  w_q f_0\Vert_{L^1_k L^2_v}
\end{align*}
for any $T>0$.
Moreover, let
\begin{align*}
\kappa=
\begin{cases}
1&(\ga+2s\geq 0),\\
\dis \frac{1}{1+|\ga+2s|} &(\gamma+2s<0,\ \gamma>\max\{-3, -3/2-2s\}),
\end{cases}
\end{align*}
then there is $\lambda>0$ such that the solution also enjoys the time decay estimate
\begin{align*}
\Vert w_q f(t)\Vert_{L^1_k L^2_v} \lesssim e^{-\lambda t^\kappa} \Vert w_q  f_0\Vert_{L^1_k L^2_v}
\end{align*}
for any $t\geq 0$. 
\end{proposition}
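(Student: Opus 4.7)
The plan is to work entirely on the Fourier side in $x$ and estimate mode by mode. Applying $\CF_x$ to \eqref{BE} yields $\pa_t \hat f + i(v\cdot k)\hat f + L\hat f = \widehat{\Gamma(f,f)}(k)$, where the nonlinearity becomes a convolution in $k\in\Z^3$. Because $\Omega=\T^3$ and mass, momentum, and energy are conserved, $\hat f(t,0,\cdot)\perp \ker L$, so the full spectral gap of $L$ is available at the zero mode. I would construct solutions by the linear iteration $\pa_t \hat f^{n+1} + i(v\cdot k)\hat f^{n+1} + L\hat f^{n+1} = \widehat{\Gamma(f^n,f^n)}$, prove a uniform a priori bound in the norm $\|w_q\cdot\|_{L^1_kL^\infty_TL^2_v\cap L^1_kL^2_TL^2_{v,D}}$, and pass to the limit; uniqueness and the bound $F\ge 0$ follow by the standard difference estimate and a mild formulation argument respectively.

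The key step is the mode-wise weighted energy identity. Testing the Fourier-transformed equation against $w_q^2\overline{\hat f(k)}$ in $L^2_v$, the transport term $i(v\cdot k)$ vanishes by skew-adjointness, and the weighted coercivity of $L$ (available in the regimes \eqref{ass.add}; see \cite{AMUXY-2012-JFA}) produces
\begin{align*}
\tfrac12 \tfrac{d}{dt}\|w_q\hat f(k)\|_{L^2_v}^2 + c_0\|w_q\hat f(k)\|_{L^2_{v,D}}^2 \le C\|\mathbf{P}\hat f(k)\|_{L^2_v}^2 + \big|(w_q^2\widehat{\Gamma(f,f)}(k),\hat f(k))_{L^2_v}\big|,
\end{align*}
where $\mathbf{P}$ is the $L^2_v$-projection onto $\ker L$. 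The degenerate macroscopic term $\|\mathbf{P}\hat f(k)\|_{L^2_v}^2$ is recovered by a Kawashima-type interactive-energy functional built from the moment equations for $\hat f$, producing an equivalent Lyapunov quantity $\CE(k)\simeq \|w_q\hat f(k)\|_{L^2_v}^2$ whose dissipation $\CD(k)\gtrsim \|w_q\hat f(k)\|_{L^2_{v,D}}^2$ absorbs the macroscopic loss. The crucial point, which I view as the main obstacle, is to design the correction so that its cross terms remain controllable by $\|w_q\hat f(k)\|_{L^2_v}$ without extra factors of $|k|$ or derivatives that would break integrability in $L^1_k$; on the torus, coercivity at $k=0$ together with a Poincar\'e-type argument at $k\ne 0$ handles this issue cleanly.

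For the nonlinearity, the trilinear estimate \cite[Theorem 1.2]{AMUXY-KRM-2013}, valid under \eqref{ass.add}, gives $|(w_q^2\Gamma(g,h),\phi)_{L^2_v}|\lesssim \|w_q g\|_{L^2_v}\|w_q h\|_{L^2_{v,D}}\|w_q\phi\|_{L^2_{v,D}}$ plus a symmetric term. After taking the square root of the mode-wise inequality, integrating in $t$ over $[0,T]$, and integrating in $k$, the convolution $\widehat{\Gamma(f,f)}(k)=\sum_\ell \Gamma(\hat f(k-\ell),\hat f(\ell))$ combines with Young's inequality $L^1_k\ast L^1_k\hookrightarrow L^1_k$ and Cauchy--Schwarz in $t$ to yield
\begin{align*}
\|w_qf\|_{L^1_kL^\infty_TL^2_v}+\|w_qf\|_{L^1_kL^2_TL^2_{v,D}} \le C\|w_qf_0\|_{L^1_kL^2_v}+C\|w_qf\|_{L^1_kL^\infty_TL^2_v}\|w_qf\|_{L^1_kL^2_TL^2_{v,D}}.
\end{align*}
A standard continuity argument under the smallness of $\|w_q f_0\|_{L^1_kL^2_v}$ closes the a priori bound and delivers the global existence and the claimed norm bound.

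For the decay estimate, when $\gamma+2s\ge 0$ the weighted coercivity yields $\CD(k)\gtrsim \CE(k)$ (the spectral gap of $L$ at $k=0$ is combined with phase mixing from $i(v\cdot k)$ at $k\ne 0$), giving exponential decay per mode and hence $\kappa=1$. When $\gamma+2s<0$, this comparison degenerates as $\CD(k)\gtrsim \langle v\rangle^{\gamma+2s}\CE(k)$; the exponential weight $w_q=\exp(\tfrac{q}{4}\langle v\rangle)$ then permits a Caflisch--Strain splitting in $v$ at a level $R(t)$, so that the low-velocity part contributes $R(t)^{\gamma+2s}\CE(k)$ while the high-velocity remainder decays like $e^{-\tfrac{q}{4}R(t)}\CE(k)$. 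Optimizing $R(t)\simeq t^{\kappa/(1-\kappa)}$ produces the sub-exponential rate $e^{-\lambda t^\kappa}$ with $\kappa=1/(1+|\gamma+2s|)$, and integrating this mode-wise estimate in $k\in\Z^3$ gives the stated $L^1_kL^2_v$ decay.
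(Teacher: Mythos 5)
Your sketch is essentially the proof strategy of the cited source: note that the paper itself does not prove Proposition \ref{DLSS Existence} (it is quoted verbatim from \cite{DLSS}), but your route --- mode-wise weighted energy estimates on the Fourier side, recovery of the macroscopic dissipation via a Kawashima-type compensator built from the fluid moment system (with $k=0$ handled by the conservation laws and $|k|\ge 1$ by the torus/Poincar\'e structure), the trilinear estimate of \cite{AMUXY-KRM-2013} combined with $L^1_k\ast L^1_k\hookrightarrow L^1_k$, and the Strain--Guo velocity splitting for the sub-exponential rate $\kappa=1/(1+|\gamma+2s|)$ --- coincides with the argument of \cite{DLSS} and with the analogous machinery this paper develops in Sections 3--4 and the Appendix for the whole-space case. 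The only cosmetic slips are the description of the $k\neq 0$ mechanism as ``phase mixing'' (it is the compensator plus $|k|\ge 1$) and the exponent in the splitting level $R(t)$ (it should be $R(t)\simeq t^{\kappa}$ rather than $t^{\kappa/(1-\kappa)}$), neither of which affects the stated conclusion.
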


In this paper, we are interested in the case $\Omega=\mathbb{R}^3$. Recall that if $\Omega=\mathbb{T}^3$, one can first obtain the first order macro dissipation over the low-frequency domain and then apply the Poincar\'e inequality to recover the zero-order macro dissipation, 
which is crucially used to control the macro part of nonlinear terms, so that we can close the $L^1_k$ estimates by themselves. However, this is evidently not true if $\Omega=\mathbb{R}^3$. Therefore we have to find out an alternative way to close the a priori estimates on the nonlinear term for such low regularity solutions without using any explicit space derivative.

In order to overcome this difficulty, we are strongly motivated by Kawashima-Nishibata-Nishikawa \cite{KNN}, where the authors developed a new $L^p$ $(1\leq p\leq \infty)$ energy method for multi-dimensional viscous conservation laws in $\R^d$ with $d\geq 1$ for the initial data in $L^1\cap L^p$ that is even not necessarily small. Their energy method is useful enough to derive the optimal decay estimates of solutions in the $W^{1,p}$ space for the Cauchy problem.
Moreover, one advantage of this method is that for the $L^p$-$L^1$ time decay estimates one does not need any explicit representation of the semigroup via the linearized Fourier analysis but rather uses the time-weighted method with the help of interpolation inequalities. 

To incorporate the above idea into the study of our problem, we first derive the uniform $L^1_k\cap L^p_k$ estimates under the a priori assumption of both smallness and time decay of the macro part. Note that the time decay of the macro part should be strong enough to assure the convergence of the $L^2_T$ norm. Then it suffices to derive the time decay of solutions to close the a priori assumption. Since we will work on the Fourier side, the interpolation inequalities we use for these estimates are given in terms of the Fourier variable $k$. We develop those interpolation inequalities both in time-frequency double variables for hard potentials and in time-frequency-velocity triple variables for soft potentials.
These estimates indeed prove that the a priori assumption can be verified if initial data is sufficiently small in terms of the $L^1_k\cap L^p_k$ norm. This is our strategy in this paper and more details will be given at the end of Section \ref{sec2}.

Before we introduce our main results in the next section, let us carry out a short review on the Boltzmann equation in the perturbation framework near global Maxwellians. For brevity, we mainly focus on only the time-decay of solutions. The readers may find a detailed review on global existence problems of such equations in the recent work \cite{DLSS}. In particular, we only mention the first results on the non-cutoff Boltzmann equation independently by Gressman-Strain \cite{GS} and AMUXY \cite{AMUXY-2011-CMP,AMUXY-2012-JFA}. 

Along with the existence problem, the decay rate of solutions to kinetic equations, such as the Boltzmann equation, is also a long-standing problem of which we cannot explain all the related literature here. Therefore we will briefly review some of them. Readers may find works on decay rate for the angular cut-off case in \cite{Gla} and reference therein.

Ukai~\cite{Uk} first proved the decay of a solution (alongside its existence) on the torus. In that paper, spectral analysis is the key method. Decay on the torus is also shown in, including related equations,  \cite{Ca-1980, GS-TTSP-1999, DV,SG-CPDE,SG-08-ARMA, GS, BS-JFA-2013, DLSS}. For this case, the $H$-theorem is anticipated to dominate the system in a box, and thus solutions are shown to have the  (sub-)exponential decay to Maxwellians. 

On the whole space, the decay rate is expected to be worse than that on the torus because dispersivity coming from $\partial_t + v\cdot \nabla_x$ is dominant, and the optimal decay of linearized equations is a heat-like one. We list some of the results on the cut-off case first. Kawashima~\cite{K} obtained the optimal decay in $L^2_vH^\ell_x$, where $H^\ell$ is the standard $L^2$-based Sobolev space, by using the compensation function, constructed by the observation of thirteen moments. This method is improved by \cite{D, DSt} to use the macro-micro decomposition instead, which is now standard. Ukai-Yang~\cite{UY} showed the optimal decay in the space $L^2\cap L^\infty_\beta$ without any derivative,  and in \cite{DUYZ} the linearized Boltzmann equation with time-dependent force is studied. However, these are for the hard potential case, and to our best knowledge, in an early stage of the study, only Ukai-Asano \cite{UA} considered the moderately soft potential case $-1<\gamma\le 0$ on the whole space, where the semigroup method is used. The first and second authors in \cite{DSa} improved the result in \cite{UA} to include the full soft potential case $-3<\gamma< 0$, where they deduced decay rate in a Chemin-Lerner type space $\tilde{L}^\infty_T \tilde{L}^2_v (B^{3/2}_x)$. We remark that the slight modifications to  \cite{DSa} could be made to replace the Besov space $B^{3/2}_x$ by $L^\infty_x$ by using Guo's techniques as in \cite{Guo-soft,Guo-bd} and \cite{DHWY}. 

For the non-cutoff Boltzmann equation on the whole space, AMUXY~\cite{AMUXY-ARMA-2011} first proved the decay rate $(1+t)^{-1}$, but this was not optimal. Then Strain~\cite{Str12} proved the optimal decay of $L^2_v L^r_x$ norm with $r\ge 2$ for the soft potential case in $\mathbb{R}^3$. 
We note that, together with the $L^1_k \cap L^p_k$-interplay technique by \cite{KNN}, the splitting of frequency in the Fourier side by \cite{Str12} is also an important method we will employ.
Later \cite{SS} showed the $L^r_xL^2_v$-decay of the solution obtained by \cite{GS, Str12} for $2\le r\le \infty$ if initial data is (not only in the solution space) bounded in a negative Besov space $\dot{B}^{-q,\infty}_2L^2_v$, and in a similar spirit \cite{LZ-JDE-2021} proved the optimal decay of the solution for the cutoff case found in \cite{DLX-2016}.

In the end, we give a remark on the global existence of low-regularity solutions for the non-cutoff Boltzmann equation in the whole space. By low-regularity, it means that one does not expect to make use of the Sobolev embedding $H^2(\R^3_x)\subset L^\infty(\R^3_x)$ to obtain the $L^\infty$ bounds via the high-order derivative estimates. This is really hard for the angular non-cutoff potentials due to the velocity diffusion property of collision operators. We mention the only known results \cite{GHWO,KGH} for the Landau equation in torus and general bounded domains and \cite{AMSY20} for the non-cutoff Boltzmann equation in the torus. Here, \cite{AMSY20} used a De Giorgi argument for the weighted $L^2\cap L^\infty$ estimates on solutions with polynomial tails in large velocity as in \cite{GMM}. However, it is still a big problem to extend those results to the case when the space domain is unbounded for which the exponential decay structure of the equation is no longer valid. In this sense, the current work provides the first attempt to treat the low-regularity solutions for the whole space problem on the non-cutoff Boltzmann equation.

\section{Main results}\label{sec2}

Based on the $L^1_k$-$L^p_k$ interplay of  energy estimates, our goal is to show that, under similar assumptions as in Theorem \ref{DLSS Existence} except $\Omega=\mathbb{R}^3$, a low-regularity global solution to \eqref{BE} uniquely exists and it enjoys almost optimal polynomial time decay estimates.

In the following, $\mathbf{P}$ is an orthogonal projection from $L^2_v$ to $\ker L$, which takes the form
\begin{align*}
\mathbf{P}f(v)=[a+b\cdot v + c(\vert v\vert^2-3)]\mu^{1/2}(v),
\end{align*}
and $\vert \nabla_x \vert/\langle \nabla_x \rangle$ is a Fourier multiplier with the symbol $\vert k\vert/\sqrt{1+\vert k\vert^2}$.

Our main theorems are twofold. The first one is concerned with the global existence and large-time behavior of solutions for the hard potential case.

\begin{theorem}[Hard potentials]\label{thm: MainHard}
Assume \eqref{ass.ck} and \eqref{ass.add}. Let $\Omega=\mathbb{R}^3$, $\gamma+2s\ge 0$, $3/2<p\le \infty$ and 
\begin{align*}
\sigma=3\Big(1-\frac{1}{p}\Big) -2\varepsilon
\end{align*}
with $\varepsilon>0$ arbitrary small. 
There is $\epsilon_1>0$  such that if $F_0(x,v)=\mu+\mu^{\frac{1}{2}}f_0(x,v)\ge 0$ and
\begin{align*}
\Vert  f_0\Vert_{L^1_k L^2_v} +\Vert  f_0\Vert_{L^p_k L^2_v} \le \epsilon_1,
\end{align*}
then the Cauchy problem \eqref{BE} admits a unique global mild solution $f(t,x,v)$, $t>0$, $x\in \mathbb{R}^3$, $v\in \mathbb{R}^3$, which satisfies $F(t,x,v)=\mu+\mu^{\frac{1}{2}}f(t,x,v)\ge 0$ as well as the following estimates:
\begin{multline}
\Vert  (1+t)^{\frac{\sigma}{2}} f \Vert_{L^1_k L^\infty_T L^2_v} 
+\Vert   (1+t)^{\frac{\sigma}{2}} (\mathbf{I}-\mathbf{P}) f \Vert_{L^1_k L^2_T L^2_{v,D}}\\
+\Big \Vert (1+t)^{\frac{\sigma}{2}} \frac{\vert \nabla_x \vert}{\langle \nabla_x \rangle} (a,b,c) \Big\Vert_{L^1_k L^2_T}
\leq C \|f_0\|_{L^1_kL^2_v}+C  \Vert f_0 \Vert_{L^p_k L^2_v}, \label{thm.hc.c1}
\end{multline}
and 
\begin{equation}
\Vert f\Vert_{L^p_k L^\infty_T L^2_v} 
+\Vert (\mathbf{I}-\mathbf{P}) f\Vert_{L^p_k L^2_T L^2_{v,D}} 
+ \Big\Vert \frac{\vert \nabla_x \vert}{\langle \nabla_x \rangle } (a,b,c)\Big\Vert_{L^p_k L^2_T}
\le C \Vert f_0 \Vert_{L^p_k L^2_v}, 
\label{thm.hc.c2}
\end{equation}
for any $T>0$. Here $C$ is independent of $T$. 
\end{theorem}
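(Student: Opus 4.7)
The plan is to work on the spatial Fourier side and set up a continuity argument in the solution space defined by the norms appearing on the left of \eqref{thm.hc.c1}--\eqref{thm.hc.c2}. After Fourier transform in $x$, equation \eqref{BE} reduces, for each $k$, to
\begin{equation*}
\pa_t \hat{f} + \rmi(v\cdot k)\hat{f} + L\hat f = \widehat{\Gamma(f,f)}(k,v),
\end{equation*}
a transport--dissipation ODE in $L^2_v$ whose nonlinear right-hand side is a convolution in $k$. The starting point is the $L^2_v$ energy identity together with the coercivity $\langle L\hat f,\hat f\rangle\gtrsim\|(\FI-\FP)\hat f\|_{L^2_{v,D}}^2$; since $\gamma+2s\ge 0$, this already controls $\|(\FI-\FP)\hat f\|_{L^2_v}^2$ as well. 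To recover the macro piece $(a,b,c)(t,k)$, I would follow the macro--micro framework used for the torus case in \cite{DLSS}, constructing auxiliary interaction functionals from the first thirteen fluid moments. In the whole-space setting these functionals yield only the low-frequency part $\tfrac{|k|^2}{1+|k|^2}|(a,b,c)(t,k)|^2$ of the macro dissipation---exactly the degeneracy reflected by the multiplier $|\nabla_x|/\langle\nabla_x\rangle$ in the statement.

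Combining these pieces gives a pointwise-in-$k$ energy--dissipation inequality
\begin{equation*}
\tfrac{d}{dt}\CE(t,k) + \CD(t,k) \lesssim \bigl|\bigl\langle \widehat{\Gamma(f,f)},\hat f\bigr\rangle_{L^2_v}\bigr|,
\end{equation*}
with $\CE\sim\|\hat f\|_{L^2_v}^2$ and $\CD\sim\|(\FI-\FP)\hat f\|_{L^2_{v,D}}^2 + \tfrac{|k|^2}{1+|k|^2}|(a,b,c)|^2$. Taking square roots, Minkowski, and integrating in $k$ against either the $L^1_k$ or $L^p_k$ norm gives the energy identity at the level of the ambient space. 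The nonlinear convolution is handled by Young's inequality in $k$ together with the trilinear estimate $|\langle\Gamma(f,g),h\rangle_{L^2_v}|\lesssim\|f\|_{L^2_v}\|g\|_{L^2_{v,D}}\|h\|_{L^2_{v,D}}$ from \cite[Theorem~1.2]{AMUXY-KRM-2013}, which is the only place where \eqref{ass.add} enters. For the $L^p_k$ version, a routine absorption under smallness of $\|f_0\|_{L^1_k L^2_v}+\|f_0\|_{L^p_k L^2_v}$ yields \eqref{thm.hc.c2} uniformly in $T$.

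The new ingredient is the decay estimate \eqref{thm.hc.c1}. Adapting \cite{KNN}, I would run a time-weighted energy estimate: multiplying the pointwise inequality by $(1+t)^\sigma$ and integrating produces a reservoir term $\sigma\int_0^T(1+t)^{\sigma-1}\CE(t,k)\,dt$ that must be controlled by $\CD$. The mechanism is a low/high frequency split: on $|k|\le R$ the weight is absorbed using the already-propagated $L^p_k L^\infty_T L^2_v$ bound via H\"older,
\begin{equation*}
\int_{|k|\le R}\|\hat f\|_{L^2_v}\,dk \lesssim R^{3(1-1/p)}\,\|f\|_{L^p_k L^2_v},
\end{equation*}
while on $|k|\ge R$ the factor $\tfrac{|k|^2}{1+|k|^2}$ in the macro dissipation is uniformly bounded below, so $\CD\gtrsim\CE$ and the reservoir is absorbed. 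Optimizing $R=R(t)$ produces the polynomial rate $\sigma=3(1-1/p)-2\vps$, with the loss $-2\vps$ buffering the endpoint logarithms. Integrating against $L^1_k$ and handling the time-weighted nonlinear convolution by Young's inequality (one factor in the uniform $L^p_k$ norm from \eqref{thm.hc.c2}, the other in the time-weighted $L^1_k$ decay norm) closes \eqref{thm.hc.c1}.

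The main obstacle I foresee is precisely the coupling between the two estimates. Because the zero-order part of $(a,b,c)$ at $k=0$ is not directly controlled by any dissipation in the whole space---no Poincar\'e is available---the $L^1_k$ bound cannot close by itself, unlike on the torus \cite{DLSS}. Thus \eqref{thm.hc.c1} and \eqref{thm.hc.c2} must be propagated simultaneously through one continuity argument, with the $L^p_k$ bound supplying the low-frequency ingredient of the interpolation above and the $L^1_k$ bound supplying the factor in Young's inequality needed to close the nonlinearity; balancing the smallness constants so that neither estimate ``leaks'' into the other is the delicate point. Once the a priori bounds are closed, uniqueness and positivity of $F=\mu+\mu^{1/2}f$ follow from a standard Picard iteration on the solution space.
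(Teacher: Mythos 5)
Your proposal follows essentially the same route as the paper: Fourier-side energy estimates with the coercivity of $L$ and the trilinear bound of \cite{AMUXY-KRM-2013}, the thirteen-moment macro--micro system yielding only the degenerate dissipation $\frac{|k|}{\langle k\rangle}(a,b,c)$, and a \cite{KNN}-type time-weighted estimate whose reservoir term is split in frequency and controlled at low $|k|$ by H\"older against the propagated $L^p_k$ norm, with the unweighted and weighted bounds closed together because the macro part of the nonlinearity cannot be absorbed by the dissipation. The paper implements your ``delicate point'' as an explicit a priori smallness assumption on $\sup_t(1+t)^{\sigma/2}|\widehat{(a,b,c)}|$ in $L^1_k$ (closed afterwards by the time-weighted estimate, using $\sigma>1$ to get $L^2_T$-integrability of $(a,b,c)$), and replaces your time-dependent radius $R(t)$ by a fixed split at $|k|=1$ plus a Young-type interpolation in the exponents of $(1+t)$ and $|k|$ --- equivalent devices producing the same $\varepsilon$-loss.
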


The other main theorem corresponds to the soft potential case. The weight function in the statement is
\begin{align}\label{Weight Def}
w_{\ell,q}(v)=
\begin{cases}
\langle v \rangle^{\ell \vert \gamma+2s\vert},
&\ell>0,\ q=0\quad (\text{polynomial\ weight}),\\
e^{q\langle v\rangle/4}, 
&\ell=0,\ q>0\quad (\text{exponential\ weight}),\\
\langle v \rangle^{\ell \vert \gamma+2s\vert} e^{q\langle v\rangle/4},
&\ell>0,\ q>0\quad (\text{mixed\ weight}),
\end{cases}
\end{align}
and $(a)_+$ for $a\in \mathbb{R}$ stands for a value bigger than but arbitrary close to $a$.

\begin{theorem}[Soft potentials]\label{thm: MainSoft}
Assume \eqref{ass.ck} and \eqref{ass.add}. Let $\Omega=\mathbb{R}^3$, $\gamma+2s<0$, 
$3/2<p\le \infty$ and 
\begin{align*}
\sigma=3\Big(1-\frac{1}{p}\Big) -2\varepsilon
\end{align*}
with $\varepsilon>0$ arbitrary small. Also let $\ell\ge 0$, $q\ge 0$, $w_{\ell,q}$ be any of the weights in \eqref{Weight Def}, and suppose 
\begin{align}\label{ineq:SoftIndexCondition}
\Big(j>\frac{\sigma}{2r}\Big)
\wedge
\Big(q=0\Rightarrow \ell>\frac{\sigma}{2r}\Big),
\end{align} 
where
\begin{align*}
0<r<\frac{p'\varepsilon}{3+p'},\quad
\frac{1}{p}+\frac{1}{p'}=1.
\end{align*}
Then there is $\epsilon_2>0$ such that if $F_0(x,v)=\mu+\mu^{\frac{1}{2}}f_0(x,v)\ge 0$ and 
\begin{align}\label{MainSoftInitial}
\Vert w_{\ell+j,q} f_0 \Vert_{L^1_k L^2_v} 
+ \Vert w_{\ell+j,q} f_0\Vert_{L^p_k L^2_v} \le \epsilon_2,
\end{align}
then  the Cauchy problem \eqref{BE} admits a unique global mild solution $f(t,x,v)$, $x\in \mathbb{R}^3$, $v\in \mathbb{R}^3$, which satisfies $F(t,x,v)=\mu+\mu^{\frac{1}{2}}f(t,x,v)\ge 0$ as well as the following estimates: 
\begin{multline}
\Vert (1+t)^{\sigma/2} w_{\ell,q} f\Vert_{L^1_k L^\infty_T L^2_v}
+\Vert (1+t)^{\sigma/2} w_{\ell,q} (\mathbf{I}-\mathbf{P})  f \Vert_{L^1_k L^2_T L^2_{v,D}}\\
+\Big\Vert \frac{\vert \nabla_x \vert}{\langle \nabla_x \rangle} (1+t)^{\sigma/2} (a,b,c) \Big\Vert_{L^1_k L^2_T}
\leq   C\Vert w_{\ell+j,q}f_0\Vert_{L^1_k L^2_v}
+ C \Vert w_{\ell+j,q} f_0\Vert_{L^p_k L^2_v},
\label{thm.sc.c1}
\end{multline}
and 
\begin{multline}
\Vert w_{\ell,q} f\Vert_{L^p_k L^\infty_T L^2_v}
+\Vert  w_{\ell,q} (\mathbf{I}-\mathbf{P})  f \Vert_{L^p_k L^2_T L^2_{v,D}}\\
+\Big\Vert \frac{\vert \nabla_x \vert}{\langle \nabla_x \rangle}  (a,b,c) \Big\Vert_{L^p_k L^2_T}
\leq  C \Vert w_{\ell+j,q} f_0\Vert_{L^p_k L^2_v},
\label{thm.sc.c2}
\end{multline}
for any $T>0$. Here $C$ is independent of $T$.
\end{theorem}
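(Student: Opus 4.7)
The plan is to adapt the $L^1_k$--$L^p_k$ interplay strategy of Theorem \ref{thm: MainHard} to the soft potential regime $\ga+2s<0$, where the linearized dissipation norm degenerates in the large-velocity region, forcing the use of the velocity weights $w_{\ell,q}$. First, I would pass to the Fourier side in $x$, so that for each frequency $k\in\R^3$ one has
\[
\pa_t \hat f + \rmi (v\cdot k)\hat f + L\hat f = \widehat{\Gamma(f,f)}(t,k,v),
\]
and decompose $\hat f=\mathbf P\hat f + (\mathbf I-\mathbf P)\hat f$. Pairing with $w_{\ell,q}^2\hat f$ in $L^2_v$ and invoking the weighted coercivity of $L$ (as in \cite{DLSS}) yields the micro dissipation $\|w_{\ell,q}(\mathbf I-\mathbf P)\hat f\|_{L^2_{v,D}}^2$, while the macro component $(a,b,c)(t,k)$ produces only the degenerate dissipation $\tfrac{|k|^2}{1+|k|^2}|(a,b,c)|^2$ coming from the standard macroscopic ODE system, because no Poincar\'e inequality is available on $\R^3$.

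Mirroring the hard-potential proof, I then run two coupled a priori estimates. The weighted $L^p_k$ bound \eqref{thm.sc.c2} is obtained by applying Minkowski in $k$ to the differential identity for $\|w_{\ell,q}\hat f(t,k)\|_{L^2_v}^2$ and integrating in time; the nonlinear term, being a convolution in $k$, is controlled by Young's inequality in $k$ combined with the velocity trilinear estimate of the form \cite[Theorem 1.2]{AMUXY-KRM-2013}, whose hypothesis is exactly \eqref{ass.add}. The time-weighted $L^1_k$ bound \eqref{thm.sc.c1} is run with the extra multiplier $(1+t)^{\sigma/2}$, which produces a commutator term $\sigma(1+t)^{\sigma/2-1}\|w_{\ell,q}\hat f\|_{L^2_v}^2$ on the right-hand side; this term can be absorbed only once the expected $(1+t)^{-\sigma/2}$ decay has been established.

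The key new step is a time--frequency--velocity interpolation that converts loss of velocity weight into polynomial time decay. Schematically, for a small parameter $r\in(0,1)$ one would interpolate
\[
\|w_{\ell,q}\hat f\|_{L^2_v}
\lesssim
\|w_{\ell+j,q}\hat f\|_{L^2_v}^{1-r}\,\|w_{\ell,q}(\mathbf I-\mathbf P)\hat f\|_{L^2_{v,D}}^{r}
+ |(a,b,c)(t,k)|,
\]
using that the extra factor $\langle v\rangle^{-j|\ga+2s|}$ (or the exponential tail in the mixed case) precisely closes the gap between $L^2_v$ and $L^2_{v,D}$ in the soft regime. Splitting $\R^3_k$ into $\{|k|\le 1\}$ and $\{|k|\ge 1\}$, applying H\"older in $k$ with exponents compatible with the propagated $L^p_k$ norm, and using the macroscopic dissipation on the low-frequency region, one recovers $(1+t)^{-\sigma/2}$ decay of $\|w_{\ell,q}\hat f\|_{L^1_kL^2_v}$. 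The quantitative thresholds $r<p'\vps/(3+p')$ and \eqref{ineq:SoftIndexCondition} are exactly what balance these H\"older exponents against the $(1+t)^{\sigma/2}$ growth.

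The argument is then closed by a continuity/bootstrap procedure on a local-in-time mild solution constructed by contraction mapping in the same mixed-norm space, with the nonnegativity of $F$ propagated from $F_0\ge 0$ via a smoothing/limiting argument. The hardest step I expect is the weighted control of the nonlinear cross term $\Gamma(\mathbf P f,(\mathbf I-\mathbf P)f)$ in the time-weighted $L^1_k L^2_T L^2_{v,D}$ norm: here the degeneracy of the macro dissipation, the absence of Poincar\'e on $\R^3$, and the triple-variable interpolation interact, so all four parameters $(\ell,j,r,\vps)$ must be tuned simultaneously to achieve the exponent $\sigma = 3(1-1/p)-2\vps$ while keeping the initial-data assumption \eqref{MainSoftInitial} as weak as claimed.
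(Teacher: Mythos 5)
Your proposal is correct and follows essentially the same route as the paper: Fourier-side weighted energy estimates with the macro--micro and low/high frequency splittings, non-time-weighted $L^1_k$ and $L^p_k$ bounds closed under an a priori smallness/decay assumption on $(a,b,c)$, and a time--frequency--velocity interpolation to absorb the $(1+t)^{\sigma-1}$ commutator term, with the thresholds on $r$, $j$, $\ell$ arising exactly where you place them. The only cosmetic difference is that where you use a multiplicative H\"older interpolation in $v$ to trade the extra weight $j$ for time decay, the paper uses the equivalent additive splitting of velocity space into $E=\{\langle v\rangle^{|\gamma+2s|}\le (1+t)^{r}\}$ and its complement in the style of Strain--Guo.
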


To the best of our knowledge, Theorems \ref{thm: MainHard} and \ref{thm: MainSoft} provide the first result on the global existence of a class of low-regularity solutions for the non-cutoff Boltzmann equation in the whole space without relying on the Sobolev embedding $H^2(\R^3_x)\subset L^\infty(\R^3_x)$ via the high-order derivative estimates. In the meantime, we also obtain an explicit polynomial decay rate of solutions to the nonlinear Cauchy problem \eqref{BE}  by
\begin{equation}
\label{thm.re1}
\|f(t)\|_{L^1_kL^2_v} \leq C\|f_0\|_{(L^1_k\cap L^p_k)L^2_v} (1+t)^{-\frac{3}{2} (1-\frac{1}{p})+\varepsilon}   \quad \text{(hard potentials)}
\end{equation}
\begin{equation}
\label{thm.re2}
\|w_{\ell,q}f(t)\|_{L^1_kL^2_v} \leq C\|w_{\ell+j,q}f_0\|_{(L^1_k\cap L^p_k)L^2_v} (1+t)^{-\frac{3}{2} (1-\frac{1}{p})+\varepsilon}   \quad \text{(soft potentials)}
\end{equation}
for any $t\geq 0$, where $3/2<p\leq \infty$ is arbitrary and $0<\varepsilon<1-\frac{3}{2p}$ is arbitrarily chosen such that $\sigma=3(1-\frac{1}{p}) -2\varepsilon>1$.  

In what follows we give some remarks on such the result. First, since $\varepsilon$ can be sufficiently small, from \eqref{thm.re1} and \eqref{thm.re2} we have obtained an almost optimal decay rate $(1+t)^{-\frac{3}{2}(1-\frac{1}{p})_+}$ of solutions in the norm of $L^1_kL^2_v$ for initial data in the space $(L^1_k\cap L^p_k)L^2_v$ with small enough norm. Here, by an optimal rate, it means that the rate $(1+t)^{-\frac{3}{2}(1-\frac{1}{p})}$ is consistent with the one for either the heat equation or the linearized Boltzmann equation in the whole space in the framework of $L^\infty-L^{p'}$ time-decay in terms of the Hausdorff-Young inequality. Due to the technique of proofs, it is still a problem for us to remove a small gap with the parameter $\varepsilon$ in order to get an optimal rate. Notice that \eqref{thm.re1} and \eqref{thm.re2} can be verified at the linear level even in case without $\varepsilon$. However, it is highly nontrivial to prove them for the nonlinear problem due to low-regularity of solutions and initial data. 

Second, in the case of soft potentials, the velocity weight function is necessarily used to compensate for the degenerate dissipation of the linearized operator in large velocities. Notice that the rate is polynomial and independent of the weight function since it is essentially determined by the heat diffusion property over the low-frequency domain. Thus it seems most reasonable to make use of the pure polynomial weight as in the first case of \eqref{Weight Def} with $\ell$ satisfying \eqref{ineq:SoftIndexCondition}. 

Last, we should emphasize that the global existence is established under the assumption on the smallness of norms in both spaces $L^1_kL^2_v$ and $L^p_kL^2_v$ in contrast with the torus case \cite{DLSS} where only smallness in $L^1_kL^2_v$ was used. This means that the time-decay of solutions essentially induced by finite propagation of the norm in $L^p_kL^2_v$ plays a vital role in the proof of global existence, which is similar to \cite[Theorem 9.1 and Remark 9.1]{UA}.  It is unclear how to obtain the global existence without using the  $L^p_kL^2_v$ norm for the case of the whole space. Moreover, it is even unclear whether or not one can relax the smallness of the  $L^p_kL^2_v$ norm for initial data to the only boundedness which should look more reasonable in treating the nonlinear problem, for instance, see \cite[Theorem 2.4]{GW} or \cite[Theorems 1.1 and 1.3]{SS}. Those questions can be left to interested readers.

The key ideas for the proof of Theorems \ref{thm: MainHard} and \ref{thm: MainSoft} are, as we have briefly explained in the previous section, given as follows:
\begin{enumerate}
\item[(a)] As we can see from the statements, we cannot close the estimate by assuming that merely the $L^1_k$-norm of initial data is small without using the $L^p_k$-norm. This is because we have no control of the remainder term of time-weighted estimates which contains $(1+t)^{\sigma-1}$. The estimates of such terms are carried out via the interplay of $L^1_k$ and $L^p_k$ estimates. Finally, these are integrated with time-weighted estimates, as in \cite{KNN}.

\item[(b)] Non-time-weighted estimates will be proved under \eqref{apaweak}, the smallness assumption of $\Vert (1+t)^{\sigma/2}\mathbf{P}f\Vert_{L^1_k L^2_T L^2_{v,D}}\simeq \Vert (1+t)^{\sigma/2}(a,b,c)\Vert_{L^1_k L^2_T}$. As the main theorems indicate, we require different dissipation estimates for the microscopic and macroscopic parts. 
The assumption helps us to estimate the nonlinear terms by dividing each of them into four terms via
\begin{align*}
(\text{micro}+\text{macro})\times (\text{micro}+\text{macro}).
\end{align*}
The macro term coming from nonlinear terms does not have $\vert \nabla_x\vert$ over the low-frequency regime, which is necessary for the term to be absorbed into. Thus we need the assumption. This will be finally verified with the aid of time-weighted estimates. Notice that one may use the $L^6$-$L^3$ Young inequality in the $L^2_x$ framework as in \cite{Guo-IUMJ}, but such argument fails in the framework of $L^\infty_x$ that behaves similarly as $L^1_k$.

\item[(c)] For the soft potential case, along with non-weighted estimates for the hard potential case, it is known that velocity-weighted estimates are also needed. Especially, as we can see in the statement, we use different energy and dissipation terms for $w_{\ell,q} (\mathbf{I}-\mathbf{P})f \mathbf{1}_{\vert k\vert \le 1}$ and $w_{\ell,q}f \mathbf{1}_{\vert k\vert \ge 1}$. We owe the use of these terms to  \cite{Str12}.

\item[(d)] The condition $p>3/2$ comes from the restriction $\sigma>1$. Indeed, by definition of $\si$, it is equivalent to require $3(1-\frac{1}{p})>1$ since $\varepsilon$ can be arbitrarily small.   The index $3/2$ in the Fourier space $\R^3_k$ corresponds to $3$ in the phase space $\R^3_x$. This correspondence in the critical case of $p=3/2$ suggests a certain relation between the existence theories of the Boltzmann equation and the incompressible Navier-Stokes equations in the low-regularity existence framework, see the classical result Kato \cite{Kato} using the space $L^3(\R^3_x)$.
\end{enumerate}

We remark that, as in \cite{DLSS}, a similar statement will be true for the Landau equation due to the coincidence of the proof strategy, though we do not pursue this problem in this article.

Also, we can consider the general spatial domain $\mathbb{R}^d_x$ of arbitrary dimensions $d$. Considering the integrability of $(1+t)^{-\sigma}$ with $\sigma=d(1-1/p)-2\varepsilon$, we can extend the current results to the cases $d\ge 2$. The case $d=1$ is left open.

In the following, $C>0$ stands for a large constant that may vary line by line. Also, $\eta>0$ stands for a small constant which may also vary line by line, and $C_\eta>0$ stands for a large constant that diverges as $\eta\rightarrow 0$.

\section{A priori estimates for the hard potential case}

In this section, we will prove Theorem \ref{thm: MainHard} in the case of hard potentials. Notice that some lemmas as well as their immediate variants can be also used for the soft potential case that is to be studied in the next section. We consider hard and soft cases in separate sections for the sake of a straightforward and clear argument. In particular, the proof is much simpler for hard potentials than for soft potentials.

\subsection{$L^1_k$-$L^p_k$ trilinear a priori estimates}

In this subsection, we will show some estimates in the framework of $L^1_k\cap L^p_k$ without any time or velocity weight.

We first show a technical lemma related to the upper bound of the trilinear term in the Fourier variable.
As in \cite{DLSS}, we introduce the Fourier transform of the nonlinear term $\Ga(f,g)$. Let  
\begin{equation*}
\hat{\Gamma}(\hat{f},\hat{g})(k,v)=\int_{\mathbb{R}^3}\int_{\mathbb{S}^2} B(v-u,\sigma)\mu^{1/2}(u) [ \hat{f}(u')*_k \hat{g}(v')(k)- \hat{f}(u)*_k \hat{g}(v)(k)]d\sigma du,
\end{equation*}
where the convolution $\ast_k$ with respect to the frequency variable is defined as
\begin{equation*}
\hat{f}(u)*_k \hat{g}(v)(k)=\int_{\mathbb{R}^3} \hat{f}(k-\ell,u)\hat{g}(\ell,v)\, d\ell.
\end{equation*}

\begin{lemma}\label{ineq: Boltzmann nonlinear}
For $0<s<1$ and $\gamma>\max\{-3,-3/2-2s\}$, it holds
\begin{equation}\label{lem.ibn1}
\left| \left(\hat{\Gamma}(\hat{f},\hat{g})(k),\hat{h}(k)\right)_{L^2_v}\right|\le C\int_{\mathbb{R}^3_\ell}\Vert \hat{f}(k-\ell)\Vert_{L^2_v}\Vert \hat{g}(\ell)\Vert_{L^2_{v,D}}  \Vert \hat{h}(k)\Vert_{L^2_{v,D}} d\ell.
\end{equation}
\end{lemma}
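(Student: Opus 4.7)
The plan is to reduce \eqref{lem.ibn1} to the corresponding physical-space trilinear estimate from \cite{AMUXY-KRM-2013}, which states roughly that under the condition $\gamma>\max\{-3,-3/2-2s\}$ one has
\begin{equation*}
\bigl|(\Gamma(F,G),H)_{L^2_v}\bigr|\le C\,\|F\|_{L^2_v}\|G\|_{L^2_{v,D}}\|H\|_{L^2_{v,D}},
\end{equation*}
and this is precisely the place where the additional assumption \eqref{ass.add} is invoked. The key observation is that the Fourier variable $k$ plays no role in the velocity collision mechanics, so the bilinear operator $\hat\Gamma$ on the Fourier side is exactly a convolution in $k$ of the physical-space $\Gamma$ applied slicewise.

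First I would pair $\hat{\Gamma}(\hat f,\hat g)(k)$ with $\hat h(k)$ in $L^2_v$ and expand the convolution $\ast_k$ explicitly, producing
\begin{equation*}
\bigl(\hat\Gamma(\hat f,\hat g)(k),\hat h(k)\bigr)_{L^2_v}
=\int_{\R^3_\ell}\!\iiint B(v-u,\sigma)\mu^{1/2}(u)\bigl[\hat f(k-\ell,u')\hat g(\ell,v')-\hat f(k-\ell,u)\hat g(\ell,v)\bigr]\hat h(k,v)\,d\sigma\,du\,dv\,d\ell.
\end{equation*}
For each fixed pair $(k,\ell)$ the inner quadruple integral is nothing but $\bigl(\Gamma(\hat f(k-\ell,\cdot),\hat g(\ell,\cdot)),\hat h(k,\cdot)\bigr)_{L^2_v}$, i.e., the ordinary velocity trilinear form evaluated at the three velocity-functions $\hat f(k-\ell,\cdot)$, $\hat g(\ell,\cdot)$, $\hat h(k,\cdot)$. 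Applying the cited physical-space estimate slicewise then yields the pointwise-in-$(k,\ell)$ bound
\begin{equation*}
\bigl|\text{inner integral}\bigr|\le C\,\|\hat f(k-\ell)\|_{L^2_v}\,\|\hat g(\ell)\|_{L^2_{v,D}}\,\|\hat h(k)\|_{L^2_{v,D}},
\end{equation*}
after which integrating in $\ell\in\R^3$ and pulling the absolute value inside the $\ell$-integral directly produces \eqref{lem.ibn1}.

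There is essentially no genuine obstacle in this argument, and the only substantive point is the correct matching of the roles: $\hat f(k-\ell,\cdot)$ must carry the plain $L^2_v$-norm (it plays the part of the $\mu^{1/2}(u)$-weighted factor in $\Gamma$), while $\hat g(\ell,\cdot)$ and $\hat h(k,\cdot)$ must both carry $L^2_{v,D}$-norms since the dissipation norm accompanies the two factors that genuinely feel the angular singularity. This is consistent with the asymmetric formulation of the trilinear bound in \cite{AMUXY-KRM-2013} and is exactly what the restriction $\gamma>-3/2-2s$ ensures. Once this matching is verified, the assertion follows by Fubini and the translation invariance of the convolution.
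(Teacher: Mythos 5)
Your argument is exactly the paper's proof: expand the convolution $\ast_k$, use Fubini to recognize the inner velocity integral as $(\Gamma(\hat f(k-\ell,\cdot),\hat g(\ell,\cdot)),\hat h(k,\cdot))_{L^2_v}$, apply the trilinear estimate of \cite[Theorem 1.2]{AMUXY-KRM-2013} slicewise, and integrate in $\ell$. The matching of norms (plain $L^2_v$ on the first slot, $L^2_{v,D}$ on the other two) is also as in the paper, so the proposal is correct and follows essentially the same route.
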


\begin{proof}
By definition and Fubini's theorem we have
\begin{align*}
&\vert (\hat{\Gamma}(\hat{f},\hat{g})(k),\hat{h}(k))_{L^2_v}\vert\\
&=\vert \int_{\mathbb{R}^d_v} \int_{\mathbb{R}^d_{v_*}} \int_{\mathbb{S}^{d-1}} B (\mu\mu_*)^{1/2} (\hat{f'_*}*\hat{g'}(k)-\hat{f_*}*\hat{g}(k))d\sigma dv_*\bar{\hat{h}}(k)dv\vert\\
&=\vert \int_{\mathbb{R}^d_v} \int_{\mathbb{R}^d_{v_*}} \int_{\mathbb{S}^{d-1}} B (\mu\mu_*)^{1/2}\int_{\mathbb{R}^d_\ell} (\hat{f'_*}(k-\ell)\hat{g'}(\ell)-\hat{f_*}(k-\ell)\hat{g}(\ell))d\ell d\sigma dv_* \bar{\hat{h}}(k)dv\vert\\
&=\vert \int_{\mathbb{R}^d_\ell} \int_{\mathbb{R}^d_v} \Gamma(\hat{f}(k-\ell),\hat{g}(\ell))\bar{\hat{h}}(k)dv  d\ell \vert\\
&\le \int_{\mathbb{R}^d_\ell} \vert (\Gamma(\hat{f}(k-\ell),\hat{g}(\ell)),\hat{h})_{L^2_v}\vert d \ell.
\end{align*}
Now we recall \cite[Theorem 1.2]{AMUXY-KRM-2013} that if $0<s<1$  and $\gamma>\{-3,-3/2-2s\}$, it holds
\begin{align}\label{ineq:Trilinear}
\vert (\Gamma(f,g),h)_{L^2_v}\vert \le C \Vert f\Vert_{L^2_v} \Vert g \Vert_{L^2_{v,D}} \Vert h \Vert_{L^2_{v,D}}.
\end{align}
With this inequality, the desired estimate \eqref{lem.ibn1} is readily obtained. The proof of Lemma \ref{ineq: Boltzmann nonlinear} is complete.
\end{proof}

Consider the Cauchy problem \eqref{BE}. By the energy estimates in $L^1_k\cap L^p_k$ with the help of Lemma \ref{ineq: Boltzmann nonlinear}, we obtain

\begin{lemma}\label{lem: micro a priori}
Let $p>1$.
There exists $C>0$ such that the solution to \eqref{BE} satisfies
\begin{equation}
\Vert f\Vert_{L^1_k L^\infty_T L^2_v}+\Vert (\mathbf{I}-\mathbf{P})f \Vert_{L^1_k L^2_T L^2_{v,D}}
\le C \Vert f_0 \Vert_{L^1_k L^2_v} +C\Vert f\Vert_{L^1_k L^\infty_T L^2_v} \Vert f\Vert_{L^1_k L^2_T L^2_{v,D}}, \label{ineq: L^1_k micro a priori}
\end{equation}
and 
\begin{equation}
\Vert f\Vert_{L^p_k L^\infty_T L^2_v}+\Vert (\mathbf{I}-\mathbf{P})f \Vert_{L^p_k L^2_T L^2_{v,D}}
\le C \Vert f_0 \Vert_{L^p_k L^2_v} +C\Vert f\Vert_{L^p_k L^\infty_T L^2_v} \Vert f\Vert_{L^1_k L^2_T L^2_{v,D}}, \label{ineq: L^infty_k micro a priori}
\end{equation}
for any $T>0$.
\end{lemma}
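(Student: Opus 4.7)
The plan is to derive a pointwise-in-$k$ energy identity from the Fourier-transformed equation, use Lemma~\ref{ineq: Boltzmann nonlinear} to control the cubic nonlinearity, and integrate in $k$ via Cauchy--Schwarz and Young's convolution inequality.

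Applying $\mathcal{F}_x$ to \eqref{BE} yields $\partial_t\hat{f}+i(v\cdot k)\hat{f}+L\hat{f}=\hat{\Gamma}(\hat{f},\hat{f})$. Pairing against $\hat{f}(t,k)$ in $L^2_v$ and taking the real part kills the transport term (which is purely imaginary), while the coercivity $\Re(Lg,g)_{L^2_v}\ge\lambda_0\|(\mathbf{I}-\mathbf{P})g\|_{L^2_{v,D}}^2$ yields
\[
\tfrac12\partial_t\|\hat{f}(t,k)\|_{L^2_v}^2+\lambda_0\|(\mathbf{I}-\mathbf{P})\hat{f}(t,k)\|_{L^2_{v,D}}^2\le\bigl|(\hat{\Gamma}(\hat{f},\hat{f})(t,k),\hat{f}(t,k))_{L^2_v}\bigr|,
\]
and Lemma~\ref{ineq: Boltzmann nonlinear} bounds the right-hand side by $C\|\hat{f}(t,k)\|_{L^2_{v,D}}\int\|\hat{f}(t,k-\ell)\|_{L^2_v}\|\hat{f}(t,\ell)\|_{L^2_{v,D}}\,d\ell$.

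Integrating on $[0,\tau]$ with $\tau\le T$ and taking $\sup_\tau$, the nonlinear time integral is handled by Cauchy--Schwarz in $t$ (pairing the two $L^2_{v,D}$ factors) and Minkowski in $\ell$ (to pull $\sup_t$ onto the $L^2_v$ factor). Writing $X(k):=\|\hat{f}(k)\|_{L^\infty_TL^2_v}$, $Z(k):=\|\hat{f}(k)\|_{L^2_TL^2_{v,D}}$, and $X_0(k):=\|\hat{f}_0(k)\|_{L^2_v}$, one arrives at the pointwise-in-$k$ bound
\[
X(k)^2+2\lambda_0\|(\mathbf{I}-\mathbf{P})\hat{f}(k)\|_{L^2_TL^2_{v,D}}^2\le X_0(k)^2+C\,Z(k)(X*_kZ)(k).
\]

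Taking square roots and integrating in $k$ completes the argument. For \eqref{ineq: L^1_k micro a priori}, bound $\int\sqrt{Z(X*_kZ)}\,dk$ by Cauchy--Schwarz together with Young's convolution $L^1*L^1\to L^1$, producing a term of the form $\|X\|_{L^1_k}^{1/2}\|Z\|_{L^1_k}$, which is reabsorbed into the left-hand side via Young's inequality with a small parameter to recover the product on the right. The estimate \eqref{ineq: L^infty_k micro a priori} follows by the same scheme in $L^p_k$, the only difference being that one now invokes Young's convolution in the form $L^p*L^1\to L^p$, which yields the mixed product $\|X\|_{L^p_k}\|Z\|_{L^1_k}$. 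The main subtlety is that the nonlinearity is cubic in $\hat{f}$, so a naive square root produces a $3/2$-power factor; achieving the clean quadratic form stated in the lemma requires balancing the $L^\infty_TL^2_v$ and $L^2_TL^2_{v,D}$ norms carefully through Young's inequality and absorption.
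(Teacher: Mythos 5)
Your setup (the Fourier energy identity, coercivity of $L$, Lemma \ref{ineq: Boltzmann nonlinear}, Cauchy--Schwarz in $t$, Minkowski in $\ell$, and Young's convolution $L^1\ast L^1\to L^1$, resp.\ $L^p\ast L^1\to L^p$) is exactly the paper's, but there is one missing step that your final absorption cannot do without: before estimating the nonlinear term you must use $\Gamma(f,f)\in(\ker L)^{\perp}$ to replace the pairing $(\hat\Gamma(\hat f,\hat f),\hat f)_{L^2_v}$ by $(\hat\Gamma(\hat f,\hat f),(\mathbf{I}-\mathbf{P})\hat f)_{L^2_v}$. With that replacement the factor that Cauchy--Schwarz in $t$ peels off is $\bigl(\int_0^T\Vert(\mathbf{I}-\mathbf{P})\hat f(k)\Vert_{L^2_{v,D}}^2\,dt\bigr)^{1/2}$, which is precisely the dissipation appearing on the left-hand side, so $\sqrt{ab}\le\eta a+C_\eta b$ absorbs it pointwise in $k$ and leaves the convolution $(X\ast_k Z)(k)$ at \emph{first} power; integrating in $k$ then gives exactly $C\Vert f\Vert_{L^1_kL^\infty_TL^2_v}\Vert f\Vert_{L^1_kL^2_TL^2_{v,D}}$.

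In your version the peeled-off factor is $Z(k)=\Vert\hat f(k)\Vert_{L^2_TL^2_{v,D}}$, the dissipation norm of the \emph{full} $\hat f$, whose macroscopic part is not controlled by the left-hand side (only $\Vert(\mathbf{I}-\mathbf{P})f\Vert_{L^1_kL^2_TL^2_{v,D}}$ is). Concretely, after integrating in $k$ you obtain the term $C\Vert X\Vert_{L^1_k}^{1/2}\Vert Z\Vert_{L^1_k}$, and no application of Young's inequality turns this into $C\Vert f_0\Vert_{L^1_kL^2_v}+C\Vert X\Vert_{L^1_k}\Vert Z\Vert_{L^1_k}$: splitting it as $\eta\Vert X\Vert_{L^1_k}\Vert Z\Vert_{L^1_k}+C_\eta\Vert Z\Vert_{L^1_k}$ leaves the unabsorbable $\Vert Z\Vert_{L^1_k}$ (its macro part is neither on the left nor of the stated form), while splitting it as $\eta\Vert X\Vert_{L^1_k}+C_\eta\Vert Z\Vert_{L^1_k}^2$ leaves $\Vert Z\Vert_{L^1_k}^2$, which is not the product in \eqref{ineq: L^1_k micro a priori} and, in the later application of the lemma, produces an uncontrolled $\Vert(\mathbf{I}-\mathbf{P})f\Vert_{L^1_kL^2_TL^2_{v,D}}^2$ term. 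You correctly flag the ``cubic nonlinearity versus square root'' subtlety at the end, but the resolution is not a more careful Young inequality; it is the orthogonality $\mathbf{P}\Gamma(f,f)=0$, which places the absorbable microscopic dissipation in the third slot of the trilinear estimate. The same remark applies verbatim to the $L^p_k$ case.
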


\begin{proof}
First, we prove \eqref{ineq: L^1_k micro a priori} in the same way as in \cite{DLSS} and \eqref{ineq: L^infty_k micro a priori} will follow by a slight modification.
Applying the Fourier transform to the equation with respect to $x$, we have
\begin{align}\label{eq: fourier}
\partial_t \hat{f}+iv\cdot k \hat{f}+L\hat{f}=\hat{\Gamma}(\hat{f},\hat{f}).
\end{align}
We multiply \eqref{eq: fourier} with $\bar{\hat{f}}(t,k,v)$ and take the real part to get
\begin{align*}
\frac{1}{2}\frac{d}{dt} \vert \hat{f}(t,k,v)\vert^2 +\mathrm{Re} (L\hat{f}, \hat{f})=\mathrm{Re} (\hat{\Gamma}(\hat{f},\hat{f}),\hat{f}),
\end{align*}
where $(\cdot,\cdot)$ denotes the usual complex inner product.
Integrating this over $[0,T]\times \mathbb{R}^3_v$ gives
\begin{align}\label{eq: dt}
\frac{1}{2} \Vert \hat{f}(t,k)\Vert_{L^2_v}^2 +\int^T_0 \mathrm{Re} (L\hat{f},\hat{f})_{L^2_v} d\tau =\frac{1}{2} \Vert \hat{f}_0 (k)\Vert_{L^2_v}^2+\int^T_0  \mathrm{Re} (\hat{\Gamma}(\hat{f},\hat{f}),\hat{f})_{L^2_v} d\tau.
\end{align}
Recall that the linearized  collision operator $L$ satisfies the coercivity estimate~\cite[Proposition 2.1]{AMUXY-2012-JFA}
\begin{align*}
 (Lg,g)_{L^2_v}\geq  \delta_0 \Vert (\mathbf{I}-\mathbf{P})g \Vert_{L^2_{v,D}}^2
\end{align*}
for some $\delta_0>0$.
Substituting this estimate into \eqref{eq: dt} and taking the $L^\infty_T$ norm yield
\begin{align}
\Vert \hat{f}(k)\Vert_{L^\infty_T L^2_v} + \Big(\int^T_0 \Vert (\mathbf{I}-\mathbf{P})\hat{f}(t,k)\Vert_{L^2_{v,D}}^2 dt \Big)^{1/2}\notag \\
\le C \Vert \hat{f}_0(k)\Vert_{L^2_v}+C\Big(\int^T_0 \vert (\hat{\Gamma}(\hat{f},\hat{f}), \hat{f})_{L^2_v}\vert^2 dt\Big)^{1/2}. \label{lem.nont.p1}
\end{align}
Noticing $\Gamma(f,f)\in (\ker L)^\perp$, we can replace the inner product $( \hat{\Gamma}(\hat{f},\hat{f}), \hat{f})_{L^2_v}$ with $(\hat{\Gamma}(\hat{f},\hat{f}), (\mathbf{I}-\mathbf{P})\hat{f})_{L^2_v}$. Owing to Lemma \ref{ineq: Boltzmann nonlinear}, the last term of \eqref{lem.nont.p1} is bounded as
\begin{align*}
&\Big(\int^T_0 \vert (\hat{\Gamma}(\hat{f},\hat{f}), \hat{f})_{L^2_v}\vert^2 dt\Big)^{1/2}\notag \\
&\leq C\Big( \int^T_0 \int_{\mathbb{R}^3} \Vert \hat{f}(k-\ell)\Vert_{L^2_v} \Vert \hat{f}(\ell)\Vert_{L^2_{v,D}} \Vert (\mathbf{I}-\mathbf{P})\hat{f}(k)\Vert_{L^2_{v,D}} d\ell dt \Big)^{1/2}\\
&\le C\Big( \int^T_0 \Big( \int_{\mathbb{R}^3} \Vert \hat{f}(k-\ell) \Vert_{L^2_v} \Vert \hat{f}(\ell)\Vert_{L^2_{v,D}} d\ell\Big)^2dt \Big)^{1/4} \Big( \int^T_0 \Vert (\mathbf{I}-\mathbf{P}) \hat{f}(k)\Vert_{L^2_{v,D}}^2 dt\Big)^{1/4}\\
&\le \eta \Big(\int^T_0 \Vert (\mathbf{I}-\mathbf{P}) \hat{f}(k)\Vert_{L^2_{v,D}}^2 dt \Big)^{1/2}\\
&\quad+\frac{C}{4\eta} \Big( \int^T_0 \Big( \int_{\mathbb{R}^3} \Vert \hat{f}(k-\ell) \Vert_{L^2_v} \Vert \hat{f}(\ell)\Vert_{L^2_{v,D}} d\ell\Big)^2dt \Big)^{1/2},
\end{align*}
where $\eta>0$ is an arbitrary constant. 
Using the Minkowski  inequality, the integral term with the coefficient $C/(4\eta)$ above is further bounded as
\begin{align*}
&\Big( \int^T_0 \Big( \int_{\mathbb{R}^3} \Vert \hat{f}(k-\ell) \Vert_{L^2_v} \Vert \hat{f}(\ell)\Vert_{L^2_{v,D}} d\ell\Big)^2dt \Big)^{1/2}\\
&\leq \int_{\mathbb{R}^3} \Big(\int^T_0 \Vert \hat{f}(k-\ell) \Vert_{L^2_v}^2 \Vert \hat{f}(\ell)\Vert_{L^2_{v,D}}^2 dt \Big)^{1/2} d\ell\\
&\le \int_{\R^3} \|\hat{f}(k-\ell)\|_{L^\infty_TL^2_v}\|\hat{f}(\ell)\|_{L^2_TL^2_{v,D}}\,d\ell.
\end{align*}
Combining the above two estimates, plugging it back to \eqref{lem.nont.p1}, taking the $L^1_k$ norm and using
\begin{equation}\notag
\int_{\R^3_k} \|\hat{f}\|_{L^\infty_TL^2_v}\ast_k\|\hat{f}\|_{L^2_TL^2_{v,D}}(k)\,dk\leq \Vert f\Vert_{L^1_k L^\infty_T L^2_v} \Vert f\Vert_{L^1_k L^2_T L^2_{v,D}},
\end{equation}
we then arrive at
\begin{align*}
\Vert f\Vert_{L^1_k L^\infty_T L^2_v}+\Vert (\mathbf{I}-\mathbf{P})f \Vert_{L^1_k L^2_T L^2_{v,D}}
&\le C\Vert f_0 \Vert_{L^1_k L^2_v} + C\eta \Vert (\mathbf{I}-\mathbf{P})f \Vert_{L^1_k L^2_T L^2_{v,D}} \\
&\quad+\frac{C}{\eta}\Vert f\Vert_{L^1_k L^\infty_T L^2_v} \Vert f\Vert_{L^1_k L^2_T L^2_{v,D}}.
\end{align*}
Letting $\eta>0$ be small enough, we obtain the desired estimate \eqref{ineq: L^1_k micro a priori} from above. 
Notice that \eqref{ineq: L^infty_k micro a priori} can be also proved in the same way just by taking the $L^p_k$ norm instead of $L^1_k$ and applying
\begin{equation}\notag
\left\|\|\hat{f}\|_{L^\infty_TL^2_v}\ast_k\|\hat{f}\|_{L^2_TL^2_{v,D}}\right\|_{L^p(\R^3_k)}\leq \Vert f\Vert_{L^p_k L^\infty_T L^2_v} \Vert f\Vert_{L^1_k L^2_T L^2_{v,D}}.
\end{equation}
The proof of Lemma \ref{lem: micro a priori} is complete.
\end{proof}

For further treating the nonlinear terms in Lemma \ref{lem: micro a priori}, we can obtain the closed estimates under a certain smallness assumption. To the end, for any $T>0$ we set 
\begin{equation}
N(T)=\|f\|_{L^1_kL^\infty_TL^2_v} +\int_{\R^3} \sup_{0<t<T}(1+t)^{\frac{\si}{2}}|\widehat{(a,b,c)}(t,k)|\,dk\notag
\end{equation}
with $\sigma=3(1-1/p)-2\varepsilon$ for a small enough constant $\varepsilon>0$. We make the a priori assumption
\begin{equation}
\label{apaweak}
N(T)\leq \delta
\end{equation}
with $\delta>0$ small enough. 

\begin{lemma}\label{lem: micro est hard}
Let $3/2<p\le \infty$.
Assume that $\|f_0\|_{L^1_kL^2_v}$ is sufficiently small. Under the assumption \eqref{apaweak}, 
one has
\begin{align}
\label{p1kc}
 \|f\|_{L^1_kL^\infty_TL^2_v}+\|(\mathbf{I}-\mathbf{P})f\|_{L^1_kL^2_TL^2_{v,D}}
 &\leq C \|f_0\|_{L^1_kL^2_v},
\end{align}
and 
\begin{align}
  \|f\|_{L^p_kL^\infty_TL^2_v}+\|(\mathbf{I}-\mathbf{P})f\|_{L^p_kL^2_TL^2_{v,D}}
 &\leq C \|f_0\|_{L^p_kL^2_v},
  \label{pinftykc}
\end{align}
for any $T>0$.
\end{lemma}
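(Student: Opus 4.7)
The plan is to bootstrap from Lemma \ref{lem: micro a priori} using the a priori smallness in \eqref{apaweak}. The obstruction is that \eqref{ineq: L^1_k micro a priori} and \eqref{ineq: L^infty_k micro a priori} control the nonlinear term via the full dissipation norm $\|f\|_{L^1_kL^2_TL^2_{v,D}}$, which mixes the microscopic and macroscopic contributions. I will split $f=\mathbf{P}f+(\mathbf{I}-\mathbf{P})f$ inside this factor: the micro piece will be absorbed on the left of \eqref{ineq: L^1_k micro a priori}, while the macro piece must be controlled using the time-weighted component built into $N(T)$.

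Since $\mathbf{P}f=[a+b\cdot v+c(|v|^2-3)]\mu^{1/2}$ is a fixed finite-dimensional linear combination of Maxwellian-weighted polynomials, the dissipation norm on $\mathbf{P}f$ reduces to the Euclidean size of the coefficients: $\|\mathbf{P}\hat{f}(t,k)\|_{L^2_{v,D}}\le C|\widehat{(a,b,c)}(t,k)|$. An elementary $L^2_T$–$L^\infty_T$ split against the time weight then gives
\[
\int_0^T|\widehat{(a,b,c)}(t,k)|^2\,dt\le\Bigl[\sup_{0<t<T}(1+t)^{\sigma/2}|\widehat{(a,b,c)}(t,k)|\Bigr]^2\int_0^\infty(1+t)^{-\sigma}\,dt,
\]
where $\int_0^\infty(1+t)^{-\sigma}\,dt<\infty$ precisely because $\sigma=3(1-1/p)-2\varepsilon>1$ under the hypothesis $p>3/2$ with $\varepsilon$ small. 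Taking square roots and integrating in $k$ produces
\[
\|\mathbf{P}f\|_{L^1_kL^2_TL^2_{v,D}}\le C\int_{\R^3}\sup_{0<t<T}(1+t)^{\sigma/2}|\widehat{(a,b,c)}(t,k)|\,dk\le CN(T)\le C\delta.
\]

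Substituting into \eqref{ineq: L^1_k micro a priori} and using $\|f\|_{L^1_kL^\infty_TL^2_v}\le N(T)\le\delta$, the nonlinear cross-term is dominated by $C\delta\|(\mathbf{I}-\mathbf{P})f\|_{L^1_kL^2_TL^2_{v,D}}+C\delta\|f\|_{L^1_kL^\infty_TL^2_v}$. For $\delta$ sufficiently small both terms can be absorbed on the left-hand side of \eqref{ineq: L^1_k micro a priori}, which yields \eqref{p1kc}. For \eqref{pinftykc} I apply the same decomposition of $\|f\|_{L^1_kL^2_TL^2_{v,D}}$ in \eqref{ineq: L^infty_k micro a priori}: the macroscopic piece is still bounded by $C\delta$, and the microscopic piece is bounded by $C\|f_0\|_{L^1_kL^2_v}$ via the just-established \eqref{p1kc}. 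Since both are small, the nonlinear term becomes $(\text{small})\cdot\|f\|_{L^p_kL^\infty_TL^2_v}$ and can again be absorbed on the left to close the $L^p_k$ estimate.

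The principal difficulty lies entirely in the macroscopic dissipation bound: without integrability of $(1+t)^{-\sigma}$ on $[0,\infty)$, the time-weighted supremum packaged in $N(T)$ cannot be converted into the $L^2_T$ control of $(a,b,c)$ demanded by the dissipation norm, and the macro part of the nonlinear cross-term has no mechanism to be absorbed. This is exactly where the condition $p>3/2$ enters. Everything else is a routine smallness-absorption argument once this conversion is in hand.
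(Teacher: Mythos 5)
Your argument is correct and follows essentially the same route as the paper: split the dissipation factor into $\mathbf{P}f$ and $(\mathbf{I}-\mathbf{P})f$, convert the time-weighted sup in $N(T)$ into the $L^2_T$ bound $\|\mathbf{P}f\|_{L^1_kL^2_TL^2_{v,D}}\le C\delta$ via the integrability of $(1+t)^{-\sigma}$ (which is exactly where $\sigma>1$, i.e.\ $p>3/2$, enters), absorb for $\delta$ small to get \eqref{p1kc}, and then feed \eqref{p1kc} back into \eqref{ineq: L^infty_k micro a priori} to close the $L^p_k$ estimate. No gaps.
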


\begin{proof}
We first show \eqref{p1kc}.
Decomposing $f=\mathbf{P}f+(\mathbf{I}-\mathbf{P})f$, we write
\begin{align*}
\Ga(f,f)=\Ga(f,\mathbf{P}f)+\Ga(f,(\mathbf{I}-\mathbf{P})f).
\end{align*}
Notice that the a priori assumption 
\eqref{apaweak} tells
\begin{align*}
\|\mathbf{P}f\|_{L^1_kL^2_TL^2_{v,D}}\leq C \|(a,b,c)\|_{L^1_kL^2_T}\leq C\delta\left(\int_0^T (1+t)^{-\si}dt\right)^{\frac{1}{2}}\leq C\delta
\end{align*}
due to $\si=3(1-1/p)-2\varepsilon>1$.  With this as well as \eqref{apaweak} again, it follows 
\begin{align*}
\Vert f\Vert_{L^1_k L^\infty_T L^2_v} \Vert f\Vert_{L^1_k L^2_T L^2_{v,D}} &\leq \Vert f\Vert_{L^1_k L^\infty_T L^2_v} (\Vert \mathbf{P} f\Vert_{L^1_k L^2_T L^2_{v,D}}+\Vert (\mathbf{I}-\mathbf{P})f\Vert_{L^1_k L^2_T L^2_{v,D}})\\
&\leq C \delta (\|f\|_{L^1_kL^\infty_TL^2_v}+\| (\mathbf{I}-\mathbf{P}) f\|_{L^1_kL^2_TL^2_{v,D}}).
\end{align*}
Together with \eqref{ineq: L^1_k micro a priori}, this leads to \eqref{p1kc} as $\delta>0$ can be small enough. 
Once \eqref{p1kc} is proved, it is straightforward to deduce \eqref{pinftykc} from \eqref{ineq: L^infty_k micro a priori} because
\begin{align*}
\Vert f\Vert_{L^p_k L^\infty_T L^2_v} \Vert f\Vert_{L^1_k L^2_T L^2_{v,D}}\le C (\Vert f_0\Vert_{L^1_kL^2_v}+\delta)\Vert f\Vert_{L^p_k L^\infty_T L^2_v},
\end{align*}
where $\Vert f_0\Vert_{L^1_kL^2_v}$ can also be small enough. 
This completes the proof of Lemma \ref{lem: micro est hard}. 
\end{proof}

It remains to close the a priori assumption \eqref{apaweak}.

\subsection{Macroscopic estimates}

Next, we derive an a priori estimate for the macroscopic part $\mathbf{P}f\sim[a,b,c]$. Before showing that, we shall prove two lemmas which will be used later. 
\begin{lemma}\label{lem: inner product Gamma}
Let $1\le p\le \infty$.
For any $\phi \in \mathcal{S}(\mathbb{R}^3_v)$ there exists $C_\phi>0$ such that it holds
\begin{align*}
\Big\Vert  \Big( \int^T_0 \vert (\hat{\Gamma}(\hat{f},\hat{g}),\phi )_{L^2_v}\vert^2 dt \Big)^{1/2} \Big\Vert_{L^p_k}  \le C_\phi\Vert f \Vert_{L^p_k L^\infty_T L^2_v}  \Vert g\Vert_{L^1_k L^2_T L^2_{v,D}},
\end{align*}
for any $T>0$.
\end{lemma}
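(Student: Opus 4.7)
The plan is to adapt the estimate in Lemma \ref{ineq: Boltzmann nonlinear} by replacing the third argument $\hat{h}(k)$ with the test function $\phi$, and then unwind the result through Minkowski's inequality in $t$ and Young's convolution inequality in $k$.

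First I would revisit the derivation of \eqref{lem.ibn1}: the same computation with $\hat{h}(k)$ replaced by the $k$-independent Schwartz function $\phi$ yields
\begin{align*}
\bigl| (\hat{\Gamma}(\hat{f},\hat{g})(k),\phi)_{L^2_v}\bigr|
\le C\,\|\phi\|_{L^2_{v,D}} \int_{\mathbb{R}^3_\ell} \|\hat{f}(k-\ell)\|_{L^2_v}\,\|\hat{g}(\ell)\|_{L^2_{v,D}}\,d\ell,
\end{align*}
where $\|\phi\|_{L^2_{v,D}}\le C_\phi$ is finite because $\phi\in\mathcal{S}(\mathbb{R}^3_v)$ (the singular part of the dissipation norm acts well on smooth rapidly decaying functions, and any polynomial weight coming from $|v-u|^\gamma$ is absorbed by the decay of $\phi$).

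Next I would square and integrate in $t\in[0,T]$, and apply Minkowski's integral inequality to pull the $L^2_t$ norm inside the $\ell$-integral:
\begin{align*}
\Bigl(\int_0^T |(\hat{\Gamma}(\hat{f},\hat{g})(k),\phi)_{L^2_v}|^2\,dt\Bigr)^{1/2}
&\le C_\phi \int_{\mathbb{R}^3_\ell} \Bigl(\int_0^T \|\hat{f}(k-\ell)\|_{L^2_v}^2 \|\hat{g}(\ell)\|_{L^2_{v,D}}^2\,dt\Bigr)^{1/2} d\ell \\
&\le C_\phi \int_{\mathbb{R}^3_\ell} \|\hat{f}(k-\ell)\|_{L^\infty_T L^2_v}\,\|\hat{g}(\ell)\|_{L^2_T L^2_{v,D}}\,d\ell.
\end{align*}
The right-hand side is exactly the convolution in $k$ of $\|\hat{f}(\cdot)\|_{L^\infty_T L^2_v}$ with $\|\hat{g}(\cdot)\|_{L^2_T L^2_{v,D}}$.

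Finally I would take the $L^p_k$ norm of both sides and apply Young's convolution inequality of the form $L^p_k * L^1_k \hookrightarrow L^p_k$ for any $1\le p\le \infty$, which produces
\begin{align*}
\Bigl\|\Bigl(\int_0^T |(\hat{\Gamma}(\hat{f},\hat{g}),\phi)_{L^2_v}|^2\,dt\Bigr)^{1/2}\Bigr\|_{L^p_k}
\le C_\phi \,\|f\|_{L^p_k L^\infty_T L^2_v}\,\|g\|_{L^1_k L^2_T L^2_{v,D}},
\end{align*}
as desired. There is no serious obstacle here beyond verifying that the $L^2_{v,D}$ norm of a Schwartz function is finite with a constant depending on $\phi$; the Minkowski step and Young's inequality are standard, and the identification with the $L^1_k*L^p_k$ convolution structure is the same mechanism already used in the proof of Lemma \ref{lem: micro a priori}.
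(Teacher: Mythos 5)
Your proposal is correct and follows essentially the same route as the paper: apply Lemma \ref{ineq: Boltzmann nonlinear} with $\phi$ in place of $\hat{h}$ (absorbing $\Vert\phi\Vert_{L^2_{v,D}}$ into $C_\phi$), use Minkowski's inequality in $t$, and conclude with the Young convolution inequality $L^p_k \ast L^1_k \hookrightarrow L^p_k$. The only cosmetic difference is that the paper writes out the case $p=\infty$ and remarks that the other cases follow from the same convolution inequality, whereas you treat general $p$ directly.
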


\begin{proof}
We consider only the case $p=\infty$. Thanks to $\Vert fg\Vert_{L^p_k}\le \Vert f\Vert_{L^p_k}\Vert g\Vert_{L^1_k}$, the other cases can be similarly handled.
By Lemma \ref{ineq: Boltzmann nonlinear} and Minkowski's inequality, we have
\begin{align*}
&\sup_k  \Big( \int^T_0 \vert (\hat{\Gamma}(\hat{f},\hat{g}),\phi )_{L^2_v}\vert^2 dt \Big)^{1/2} \\
&\le \sup_k \Big( \int^T_0 \Big(\int_{\mathbb{R}^3_\ell}\Vert \hat{f}(k-\ell)\Vert_{L^2_v}  \Vert \hat{g}(\ell)\Vert_{L^2_{v,D}} \Vert \phi \Vert_{L^2_{v,D}}  d\ell \Big)^2d t \Big)^{1/2} \\
&\le C_\phi \sup_k \int_{\mathbb{R}^3_\ell} \Big(\int^T_0 \Vert \hat{f}(k-\ell)\Vert_{L^2_v}^2 \Vert \hat{g}(\ell) \Vert_{L^2_{v,D}}^2 dt \Big)^{1/2} d\ell \\
&\le C_\phi \sup_k \sup_{0\le t\le T} \Vert \hat{f}(k)\Vert_{L^2_v}  \cdot \int_{\mathbb{R}^3_\ell} \Big(\int^T_0 \Vert  \hat{g}(\ell) \Vert_{L^2_{v,D}}^2 dt \Big)^{1/2} d\ell.
\end{align*}
This is the desired estimate, and the proof of Lemma \ref{lem: inner product Gamma} is complete.
\end{proof}

\begin{lemma}\label{lem: inner product L}
Let $1\le p\le \infty$. Under the same assumption as in the previous lemma, there exists $C'_\phi>0$ such that it holds
\begin{align*}
\Big\Vert \Big( \int^T_0 \vert (L\hat{f},\phi )_{L^2_v}\vert^2 dt \Big)^{1/2} \Big\Vert_{L^p_k}  \le C'_\phi \Vert f\Vert_{L^p_k L^2_T L^2_{v,D}},
\end{align*}
for any $T>0$.
\end{lemma}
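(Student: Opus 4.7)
The plan is to exploit that $\phi\in \CS(\R^3_v)$ reduces the estimate to a pointwise bound in $(t,k)$, since $L$ is linear and commutes with the Fourier transform in $x$ (so that $\widehat{Lf}(k)=L\hat{f}(k)$). Concretely, the main ingredient is the pointwise bilinear bound
$$\vert (L\hat{f}(t,k,\cdot),\phi)_{L^2_v}\vert \le C'_\phi \Vert \hat{f}(t,k,\cdot)\Vert_{L^2_{v,D}},$$
with $C'_\phi$ depending only on $\phi$. Once this is in hand, squaring, integrating in $t\in[0,T]$, and taking square roots gives $(\int^T_0 \vert(L\hat{f},\phi)_{L^2_v}\vert^2 dt)^{1/2} \le C'_\phi \Vert \hat{f}(k)\Vert_{L^2_T L^2_{v,D}}$, after which taking the $L^p_k$ norm produces the right-hand side $C'_\phi \Vert f\Vert_{L^p_k L^2_T L^2_{v,D}}$ directly. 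Unlike in Lemma~\ref{lem: inner product Gamma}, no convolution in $k$ appears, so there is no need for Minkowski's inequality on the frequency side and the value of $p\in[1,\infty]$ plays no role.

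To establish the pointwise bilinear bound, I would decompose $L$ via \eqref{LinearOp} together with the definition of $\Gamma$ to write $L g = -\Gamma(\mu^{1/2},g) - \Gamma(g,\mu^{1/2})$. The trilinear estimate \eqref{ineq:Trilinear} applied to the first piece gives
$$\vert (\Gamma(\mu^{1/2},g),\phi)_{L^2_v}\vert \le C \Vert \mu^{1/2}\Vert_{L^2_v} \Vert g\Vert_{L^2_{v,D}} \Vert\phi\Vert_{L^2_{v,D}},$$
and both $\Vert \mu^{1/2}\Vert_{L^2_v}$ and $\Vert \phi\Vert_{L^2_{v,D}}$ are finite constants; for the latter, the smoothness and Gaussian decay of $\phi$ ensure that the two integrals in the definition of $\Vert\cdot\Vert_{L^2_{v,D}}$ are absolutely convergent under the non-cutoff kernel. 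For the second piece $\Gamma(g,\mu^{1/2})$, a direct application of \eqref{ineq:Trilinear} would only produce a $\Vert g\Vert_{L^2_v}$ factor on the variable argument, which is the wrong norm. To remedy this I would use self-adjointness of $L$ on $L^2_v$ to rewrite $(Lg,\phi)_{L^2_v}=(g,L\phi)_{L^2_v}$; since $L\phi$ is again Schwartz-like, it lies in the (weighted) dual of $L^2_{v,D}$ and one obtains the desired bound. Equivalently, one may directly invoke the textbook bilinear estimate $\vert (Lf,g)_{L^2_v}\vert \le C\Vert f\Vert_{L^2_{v,D}}\Vert g\Vert_{L^2_{v,D}}$ already established in \cite{AMUXY-2012-JFA}, and then specialize $g=\phi$.

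The main, and essentially only, obstacle is the asymmetric placement of norms in \eqref{ineq:Trilinear}: the second piece $\Gamma(\cdot,\mu^{1/2})$ of $L$ does not immediately match $\Vert\cdot\Vert_{L^2_{v,D}}$ on the variable argument, which is why a self-adjoint re-pairing (or an appeal to a known bilinear bound for $L$) is needed. Everything else is a short application of Cauchy--Schwarz and Fubini; in particular, no time-weighted or frequency-convolution machinery enters, making this lemma strictly easier than Lemma~\ref{lem: inner product Gamma}.
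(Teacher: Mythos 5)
Your proposal is correct, and its overall architecture coincides with the paper's: reduce to the pointwise-in-$(t,k)$ bound $\vert (L\hat f,\phi)_{L^2_v}\vert\le C'_\phi\Vert\hat f\Vert_{L^2_{v,D}}$, note that no frequency convolution appears so the $L^p_k$ norm passes through trivially, and decompose $L\hat f=\pm[\Gamma(\mu^{1/2},\hat f)+\Gamma(\hat f,\mu^{1/2})]$ with the trilinear estimate \eqref{ineq:Trilinear} handling the symmetric piece. You also correctly isolate the only real issue, namely that \eqref{ineq:Trilinear} puts $\Vert\hat f\Vert_{L^2_v}$ rather than $\Vert\hat f\Vert_{L^2_{v,D}}$ on the piece $\Gamma(\hat f,\mu^{1/2})$. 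Where you diverge is in how that piece is repaired: the paper splits into cases, using $\Vert\cdot\Vert_{L^2_v}\le\Vert\cdot\Vert_{L^2_{\gamma/2+s}}\le C\Vert\cdot\Vert_{L^2_{v,D}}$ for hard potentials (where there is in fact no obstacle at all) and citing \cite[Lemma 2.15]{AMUXY-2012-JFA}, i.e.\ $\vert(\Gamma(f,\mu^{1/2}),\phi)_{L^2_v}\vert\le C\Vert\mu^{10^{-3}}f\Vert_{L^2_v}\le C\Vert f\Vert_{L^2_{v,D}}$, for soft potentials. Your two remedies --- re-pairing by self-adjointness, or invoking the known bilinear upper bound $\vert(Lf,g)_{L^2_v}\vert\le C\Vert f\Vert_{L^2_{v,D}}\Vert g\Vert_{L^2_{v,D}}$ from \cite{AMUXY-2012-JFA} --- are both legitimate and give a case-free argument; the second is the cleanest. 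The only soft spot is the duality route: asserting that $L\phi$ ``lies in the dual of $L^2_{v,D}$'' deserves one line of justification, e.g.\ that $\Vert g\Vert_{L^2_{v,D}}\gtrsim\Vert\langle v\rangle^{\gamma/2+s}g\Vert_{L^2_v}$ together with the rapid decay of $L\phi$ gives $\vert(g,L\phi)_{L^2_v}\vert\le\Vert\langle v\rangle^{\gamma/2+s}g\Vert_{L^2_v}\Vert\langle v\rangle^{-(\gamma/2+s)}L\phi\Vert_{L^2_v}$ with a finite second factor; this matters for soft potentials where $\langle v\rangle^{-(\gamma/2+s)}$ grows. With that remark added, your argument is complete and, if anything, slightly more uniform than the paper's.
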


\begin{proof}
We only consider the case $p=\infty$ again.
Notice $L\hat{f}=\Gamma(\hat{f},\mu^{1/2})+\Gamma(\mu^{1/2},\hat{f})$. Thus the same calculation in the previous proof works. The estimate of the second term is readily obtained. The first term is bounded by
\begin{align*}
\sup_k \Big( \int^T_0 \Vert \hat{f}(k)\Vert_{L^2_v}^2 \Vert \mu^{1/2} \Vert_{L^2_{v,D}}^2 \Vert \phi \Vert_{L^2_{v,D}}^2 dt\Big)^{1/2},
\end{align*}
so we recall the estimate $\Vert \cdot \Vert_{L^2_v}\le \Vert \cdot \Vert_{L^2_{(\gamma/2+s)}} \le C \Vert \cdot \Vert_{L^2_{v,D}}$ for the hard potential case $\gamma+2s\ge 0$. For the soft potential case, we cite~\cite[Lemma 2.15]{AMUXY-2012-JFA}, which shows $\vert (\Gamma(f,\mu^{1/2})),\phi)_{L^2_v}\vert \le C \Vert \mu^{10^{-3}} f\Vert_{L^2_v}$. Since the last norm is bounded by $\Vert f \Vert_{L^2_{v,D}}$, we complete the proof of Lemma \ref{lem: inner product L}.
\end{proof}

Now, considering the macro-micro decomposition of the equation
\begin{align}\label{eq: linearized BE}
\begin{cases}
\partial_t f +v\cdot \nabla_x f +Lf=H,\\
f(0,x,v)=f_0(x,v),
\end{cases}
\end{align}
we will have the a priori estimates on the macro part in the following lemma. Proof of this lemma will be put in Appendix since it is a standard procedure.

\begin{lemma}\label{lem: macro a priori}
Let $1\le p< \infty$.
Let $f$ be a solution to \eqref{eq: linearized BE} with an inhomogeneous term $H=H(t,x,v)$ such that $\mathbf{P}H\equiv 0$. Then it holds that
\begin{align*}
\Big\Vert \frac{\vert \nabla_x \vert}{\langle \nabla_x \rangle } (a,b,c) \Big\Vert_{L^p_k L^2_T} 
&\le C \Vert f_0 \Vert_{L^p_k L^2_v} + \Vert f\Vert_{L^p_k L^\infty_T L^2_v} + \Vert (\mathbf{I}-\mathbf{P})f\Vert_{L^p_k L^2_T L^2_{v,D}} \\
&\quad+\Big(\int_{\mathbb{R}^3_k} \Big(\int^T_0 \frac{1}{1+\vert k\vert^2}\vert (\hat{H}, \mu^{1/4})_{L^2_v}\vert^2 d\tau \Big)^{p/2} dk\Big)^{1/p}, 
\end{align*}
and 
\begin{align*}
\Big\Vert \frac{\vert \nabla_x \vert}{\langle \nabla_x \rangle } (a,b,c) \Big\Vert_{L^\infty_k L^2_T} 
&\le C \Vert f_0 \Vert_{L^\infty_k L^2_v} + \Vert f\Vert_{L^\infty_k L^\infty_T L^2_v} + \Vert (\mathbf{I}-\mathbf{P})f\Vert_{L^\infty_k L^2_T L^2_{v,D}} \\
&\quad+\sup_{k\in\mathbb{R}^3} \Big(\int^T_0 \frac{1}{1+\vert k\vert^2}\vert (\hat{H}, \mu^{1/4})_{L^2_v}\vert^2 d\tau \Big)^{1/2},
\end{align*}
for any $T>0$.
\end{lemma}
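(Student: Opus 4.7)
The plan is to derive the macroscopic dissipation via the classical macro–micro decomposition performed in Fourier space, combined with a Kawashima-type compensating functional whose prefactor $(1+|k|^2)^{-1}$ supplies the multiplier $|\nabla_x|/\langle\nabla_x\rangle$. I would proceed in four steps.

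First, I would Fourier-transform \eqref{eq: linearized BE} in $x$ and test against the basis $\{\mu^{1/2},\,v_j\mu^{1/2},\,(|v|^2-3)\mu^{1/2}/6\}$ of $\ker L$ together with a small set of ``higher moments'' $\{v_iv_j\mu^{1/2},\,v_i(|v|^2-5)\mu^{1/2}\}$. Using $LP=0$, $\mathbf{P}H\equiv 0$, and $\hat\Gamma=\hat\Gamma$ replaced here by zero (the source is $\hat H$), this yields a closed hyperbolic-type system in $k$,
\begin{align*}
\partial_t \hat a + ik\cdot \hat b &= 0,\\
\partial_t \hat b + ik(\hat a + 2\hat c) + ik\cdot \hat\Theta\bigl((\mathbf{I}-\mathbf{P})\hat f\bigr) &= \hat g_b,\\
\partial_t \hat c + \tfrac{1}{3}ik\cdot \hat b + ik\cdot \hat\Lambda\bigl((\mathbf{I}-\mathbf{P})\hat f\bigr) &= \hat g_c,
\end{align*}
plus auxiliary equations from the higher moments. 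The source terms $\hat g_{b},\hat g_{c}$ arise from $(L\hat f,\phi)_{L^2_v}$ and $(\hat H,\phi)_{L^2_v}$ for $\phi$ in the test family; thanks to the Gaussian decay of $\mu$, each such inner product is dominated by $|(\hat H,\mu^{1/4})_{L^2_v}|$ up to a constant, plus (by Lemma~\ref{lem: inner product L}) by $\|(\mathbf{I}-\mathbf{P})\hat f\|_{L^2_{v,D}}$-type quantities.

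Second, I would construct a compensating functional
\[
\mathcal{I}(t,k) \;=\; \frac{1}{1+|k|^2}\,\mathrm{Re}\Bigl(\alpha_1\, ik\cdot\hat b\,\overline{\hat a} \;+\; \alpha_2\, ik\cdot\hat b\,\overline{\hat c} \;+\; \text{cross terms coupling higher moments to } \hat b,\hat c\Bigr),
\]
so that $|\mathcal{I}(t,k)|\le \tfrac{|k|}{1+|k|^2}\|\hat f(t,k)\|_{L^2_v}^2\le C\|\hat f(t,k)\|_{L^2_v}^2$. Differentiating in $t$ and substituting the Step-1 system, and choosing the $\alpha_i$ suitably, one obtains
\[
\frac{d}{dt}\mathcal{I}(t,k) + \lambda\,\frac{|k|^2}{1+|k|^2}\bigl|(\hat a,\hat b,\hat c)(t,k)\bigr|^2 \;\le\; C\,\|(\mathbf{I}-\mathbf{P})\hat f\|_{L^2_{v,D}}^2 + \frac{C}{1+|k|^2}\bigl|(\hat H,\mu^{1/4})_{L^2_v}\bigr|^2.
\]

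Third, I integrate on $[0,T]$ and use $|\mathcal{I}(t,k)|\le C\|\hat f(t,k)\|_{L^2_v}^2$ to bound both boundary terms by $\|\hat f_0(k)\|_{L^2_v}^2+\|\hat f(\cdot,k)\|_{L^\infty_T L^2_v}^2$. Taking square roots, then applying $L^p_k$ (or $\sup_k$ for $p=\infty$) and using subadditivity of $(\cdot)^{1/2}$, the two estimates of the lemma fall out directly. The main obstacle is Step~2: the coefficients $\alpha_i$ must be tuned so that (i) the quadratic form in $(\hat a,\hat b,\hat c)$ on the dissipation side is positive definite (this forces bringing in the higher-moment equations, since the $\partial_t\hat b$-equation alone does not produce $|\hat b|^2$), (ii) all cross terms pairing $(\hat a,\hat b,\hat c)$ with $(\mathbf{I}-\mathbf{P})\hat f$ — which carry a factor $|k|$ after Fourier integration by parts — are absorbed, via Cauchy–Schwarz and $\tfrac{|k|}{1+|k|^2}\le 1$, into a small multiple of the coercive macro term plus $C\|(\mathbf{I}-\mathbf{P})\hat f\|_{L^2_{v,D}}^2$, and (iii) the prefactor $(1+|k|^2)^{-1}$ transfers cleanly onto the $(\hat H,\mu^{1/4})_{L^2_v}$ source so that the right-hand side appears in exactly the norm stated.
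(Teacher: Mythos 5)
Your proposal is correct and follows essentially the same route as the paper: moment equations in Fourier space for $(\hat a,\hat b,\hat c)$ plus the higher moments $\Theta,\Lambda$, Kawashima-type cross terms weighted by $(1+|k|^2)^{-1}$ (the paper derives three such estimates separately and combines them with small constants $\delta_2,\delta_3$ rather than packaging them into one functional $\mathcal{I}$, but this is only organizational), time integration, absorption, and then the $L^p_k$ norm. The only minor imprecision is that the sources $\hat g_b,\hat g_c$ on the conservation-law equations actually vanish since $\mathbf{P}H\equiv 0$ and $(L\hat f,\phi)_{L^2_v}=0$ for $\phi\in\ker L$; the genuine sources enter only through the higher-moment equations, exactly where you invoke Lemma~\ref{lem: inner product L} and the $(\hat H,\mu^{1/4})_{L^2_v}$ bound.
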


With $H=\Gamma(f,f)$ in Lemma \ref{lem: macro a priori}, applying Lemma \ref{lem: inner product Gamma}, we have 
\begin{align*}
\Big\Vert \frac{\vert \nabla_x \vert}{\langle \nabla_x \rangle } (a,b,c) \Big\Vert_{L^p_k L^2_T} 
&\le C \Vert f_0 \Vert_{L^p_k L^2_v} + \Vert f\Vert_{L^p_k L^\infty_T L^2_v} + \Vert (\mathbf{I}-\mathbf{P}) f \Vert_{L^p_k L^2_T L^2_{v,D}} \\
&+C \Vert f\Vert_{L^p_k L^\infty_T L^2_v}\Vert f\Vert_{L^1_k L^2_T L^2_{v,D}},
\end{align*}
for $1\le p\le \infty$.
Therefore combining 
Lemma \ref{lem: micro est hard} and Lemma \ref{lem: macro a priori} readily gives

\begin{lemma}
Assume that $3/2<p\le \infty$ and $\|f_0\|_{L^1_kL^2_v}$ is sufficiently small. Under the assumption \eqref{apaweak}, it holds 
\begin{align}
\Vert  f \Vert_{L^1_kL^\infty_TL^2_v}
+\Vert (\mathbf{I}-\mathbf{P}) f\Vert_{L^1_kL^2_TL^2_{v,D}}
+\Big\Vert \frac{\vert \nabla_x \vert}{\langle \nabla_x \rangle } (a,b,c)\Big\Vert_{L^1_k L^2_T} 
\leq C \|f_0\|_{L^1_kL^2_v},
\label{p1kcc}
\end{align}
and 
\begin{align}
\Vert f\Vert_{L^p_k L^\infty_T L^2_v} 
+\Vert (\mathbf{I}-\mathbf{P}) f\Vert_{L^p_k L^2_T L^2_{v,D}} 
+ \Big\Vert \frac{\vert \nabla_x \vert}{\langle \nabla_x \rangle } (a,b,c)\Big\Vert_{L^p_k L^2_T}
\le C \Vert f_0 \Vert_{L^p_k L^2_v},
\label{pinfk}
\end{align}
for any $T>0$.
\end{lemma}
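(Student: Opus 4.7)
The statement follows by directly assembling the estimates established in this subsection. I would begin by invoking Lemma \ref{lem: micro est hard}, whose conclusions \eqref{p1kc} and \eqref{pinftykc} already bound the first two norms on the left-hand sides of \eqref{p1kcc} and \eqref{pinfk} by $C\|f_0\|_{L^1_kL^2_v}$ and $C\|f_0\|_{L^p_kL^2_v}$ respectively. Only the macroscopic dissipation $\|\frac{|\nabla_x|}{\langle \nabla_x\rangle}(a,b,c)\|_{L^p_kL^2_T}$ remains to be controlled, at $p=1$ and at the given $p$.

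For this I would use the inequality displayed just before the lemma (which itself combines Lemma \ref{lem: macro a priori} with $H=\Gamma(f,f)$ and Lemma \ref{lem: inner product Gamma}). Its right-hand side consists of the linear piece $C\|f_0\|_{L^p_kL^2_v}+\|f\|_{L^p_kL^\infty_TL^2_v}+\|(\mathbf{I}-\mathbf{P})f\|_{L^p_kL^2_TL^2_{v,D}}$, all three summands of which are $\leq C\|f_0\|_{L^p_kL^2_v}$ by Lemma \ref{lem: micro est hard}, plus the nonlinear contribution $C\|f\|_{L^p_kL^\infty_TL^2_v}\|f\|_{L^1_kL^2_TL^2_{v,D}}$.

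The only real obstacle is handling the factor $\|f\|_{L^1_kL^2_TL^2_{v,D}}$, which does not appear on the left-hand side of the conclusion. I would split it into macro and micro parts; the micro part is $\leq C\|f_0\|_{L^1_kL^2_v}$ by \eqref{p1kc}. For the macro part, using $\|\mathbf{P}\cdot\|_{L^2_{v,D}}\simeq|(a,b,c)|$ on the five-dimensional kernel and pulling the time weight $(1+t)^{\sigma/2}$ out of the $L^2_T$ norm under the $k$-integral yields
\begin{align*}
\|\mathbf{P}f\|_{L^1_kL^2_TL^2_{v,D}}
\leq C\Bigl(\int_0^T(1+t)^{-\sigma}\,dt\Bigr)^{1/2}\int_{\R^3}\sup_{0\le t\le T}(1+t)^{\sigma/2}|\widehat{(a,b,c)}(t,k)|\,dk\leq C\delta,
\end{align*}
where the time integral is finite precisely because $\sigma>1$, which is the source of the restriction $p>3/2$, and the $k$-integral is $\leq\delta$ by the a priori assumption \eqref{apaweak}.

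Combining with \eqref{pinftykc}, the nonlinear contribution is therefore bounded by $C\|f_0\|_{L^p_kL^2_v}\bigl(\|f_0\|_{L^1_kL^2_v}+\delta\bigr)$, a small multiple of $\|f_0\|_{L^p_kL^2_v}$ under the smallness of $\|f_0\|_{L^1_kL^2_v}$ and $\delta$. This yields \eqref{pinfk}, and the specialization $p=1$ yields \eqref{p1kcc}.
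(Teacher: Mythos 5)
Your proposal is correct and follows essentially the same route as the paper: the first two terms come straight from Lemma \ref{lem: micro est hard}, the macroscopic term from the displayed inequality obtained by combining Lemma \ref{lem: macro a priori} (with $H=\Gamma(f,f)$) and Lemma \ref{lem: inner product Gamma}, and the remaining factor $\Vert f\Vert_{L^1_kL^2_TL^2_{v,D}}$ is handled by the same macro--micro split, with $\Vert \mathbf{P}f\Vert_{L^1_kL^2_TL^2_{v,D}}\leq C\delta$ via \eqref{apaweak} and $\sigma>1$, exactly as in the proof of Lemma \ref{lem: micro est hard}. The paper compresses this into ``readily gives''; your write-up supplies the same details it implicitly relies on.
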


\subsection{Time-weighted estimates}
In order to conclude all the estimates, we have to close the a priori assumption \eqref{apaweak} when both $\Vert f_0\Vert_{L^1_k L^2_v}$ and  $\Vert f_0\Vert_{L^p_k L^2_v}$ are small enough.
Hence, we need to further consider the time-weighted estimates. First of all, we will show a consequence of time-weighted microscopic a priori estimates.

\begin{lemma}\label{lem: micro time-weighted}
Let $3/2<p\leq \infty$ and $\sigma=3(1-1/p)-2\varepsilon$. For any $T>0$,
it holds that
\begin{align}
&\Vert  (1+t)^{\sigma/2} f\Vert_{L^1_k L^\infty_T L^2_v}  
+\Vert (1+t)^{\sigma/2} (\mathbf{I}-\mathbf{P})f \Vert_{L^1_k L^2_T L^2_{v,D}} \notag \\
&\le C\eta \Big\Vert (1+t)^{\sigma/2} \frac{\vert \nabla_x \vert}{\langle \nabla_x \rangle } (a,b,c) \Big\Vert_{L^1_k L^2_T}+C_\eta (\Vert f_0\Vert_{L^1_k L^2_v} +\Vert f_0\Vert_{L^p_k L^2_v}),
\label{ineq: micro time-weighted}
\end{align}
where $\eta>0$ is an arbitrarily small constant.
\end{lemma}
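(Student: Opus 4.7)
The strategy is a time-weighted $L^2_v$ energy estimate on the Fourier side, paralleling the proof of Lemma \ref{lem: micro a priori} but multiplied through by $(1+t)^\sigma$. Starting from the Fourier-transformed equation \eqref{eq: fourier}, taking the real part of the $L^2_v$ inner product with $\bar{\hat f}$ and invoking the coercivity $\mathrm{Re}(L\hat f,\hat f)_{L^2_v}\ge \delta_0\|(\mathbf I-\mathbf P)\hat f\|_{L^2_{v,D}}^2$, I would multiply by $(1+t)^\sigma$ and integrate in $t\in[0,T]$ to obtain
\begin{equation*}
(1+T)^\sigma\|\hat f(T,k)\|_{L^2_v}^2+2\delta_0\int_0^T(1+t)^\sigma\|(\mathbf I-\mathbf P)\hat f\|_{L^2_{v,D}}^2\,dt\le \|\hat f_0(k)\|_{L^2_v}^2+\sigma\int_0^T(1+t)^{\sigma-1}\|\hat f\|_{L^2_v}^2\,dt+2\int_0^T(1+t)^\sigma|(\hat\Gamma(\hat f,\hat f),\hat f)_{L^2_v}|\,dt.
\end{equation*}
Taking the supremum in $T$, then the square root and the $L^1_k$ norm (exactly as in the passage leading to \eqref{lem.nont.p1}), one reduces the left-hand side of \eqref{ineq: micro time-weighted} to $C\|f_0\|_{L^1_kL^2_v}$ plus a nonlinear contribution $\CN$ and a remainder
\begin{equation*}
\CR:=\Big\|\Big(\int_0^T(1+t)^{\sigma-1}\|\hat f(t,k)\|_{L^2_v}^2\,dt\Big)^{1/2}\Big\|_{L^1_k}
\end{equation*}
produced by differentiating the time weight.

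\textbf{Nonlinear term.} $\CN$ would be treated in the same way as in Lemma \ref{lem: micro a priori}: using $\Gamma(f,f)\in(\ker L)^\perp$ to replace $\hat f$ by $(\mathbf I-\mathbf P)\hat f$ in the inner product, and then invoking Lemma \ref{ineq: Boltzmann nonlinear}, Minkowski's inequality and an $\eta$-Young inequality, to obtain
\begin{equation*}
\CN\le \eta\|(1+t)^{\sigma/2}(\mathbf I-\mathbf P)f\|_{L^1_kL^2_TL^2_{v,D}}+C_\eta\|(1+t)^{\sigma/2}f\|_{L^1_kL^\infty_TL^2_v}\,\|f\|_{L^1_kL^2_TL^2_{v,D}}.
\end{equation*}
The factor $\|f\|_{L^1_kL^2_TL^2_{v,D}}$ is small by the non-weighted bound \eqref{p1kc} combined with the a priori smallness assumption \eqref{apaweak}, so the second term is absorbed into the left-hand side of \eqref{ineq: micro time-weighted}.

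\textbf{The remainder.} Decomposing $\|\hat f\|_{L^2_v}^2\le 2|\widehat{(a,b,c)}|^2+2\|(\mathbf I-\mathbf P)\hat f\|_{L^2_v}^2$ I would split $\CR=\CR_1+\CR_2$. For the micro part $\CR_2$ I would use the hard-potential bound $\|\cdot\|_{L^2_v}\le C\|\cdot\|_{L^2_{v,D}}$ together with the pointwise Young identity $(1+t)^{\sigma-1}\le \eta(1+t)^\sigma+C_\eta(1+t)^{\sigma-2}$; the first piece is absorbed into the time-weighted micro dissipation on the left-hand side, while the residual $\int_0^T(1+t)^{\sigma-2}\|(\mathbf I-\mathbf P)\hat f\|_{L^2_{v,D}}^2\,dt$ is controlled, via an interpolation between \eqref{p1kcc} and \eqref{pinftykc}, by $\|f_0\|_{L^1_kL^2_v}+\|f_0\|_{L^p_kL^2_v}$. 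For the macro part $\CR_1$ I would write $|\widehat{(a,b,c)}|^2=\frac{|k|^2}{\langle k\rangle^2}|\widehat{(a,b,c)}|^2+\langle k\rangle^{-2}|\widehat{(a,b,c)}|^2$; the first summand feeds, after the same Cauchy-Schwarz-Young step, into the desired $C\eta\|(1+t)^{\sigma/2}\frac{|\nabla_x|}{\langle\nabla_x\rangle}(a,b,c)\|_{L^1_kL^2_T}$ term on the right-hand side, while the genuinely low-frequency piece $\langle k\rangle^{-2}|\widehat{(a,b,c)}|^2$ possesses no associated dissipation and must be handled by a Hölder-in-$k$ interpolation between the a priori $L^1_k$ decay of $(a,b,c)$ supplied by \eqref{apaweak} and the uniform $L^p_k$ bound \eqref{pinftykc}, which is what ultimately produces the $C_\eta\|f_0\|_{L^p_kL^2_v}$ contribution.

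\textbf{Main obstacle.} The genuinely delicate step is the low-frequency macroscopic piece of $\CR_1$: neither the micro dissipation nor the macro dissipation $\frac{|\nabla_x|}{\langle\nabla_x\rangle}(a,b,c)$ contributes any gain there, and the pointwise-in-$k$ time factor $(1+t)^{\sigma-1}$ is not integrable on its own. This is precisely where the $L^1_k$-$L^p_k$ interplay of \cite{KNN} must be imported in Fourier form: interpolating the decaying a priori $L^1_k$ bound $\|(a,b,c)(t)\|_{L^1_k}\lesssim \delta(1+t)^{-\sigma/2}$ against the propagated uniform $\|(a,b,c)\|_{L^p_kL^\infty_TL^2_v}\le C\|f_0\|_{L^p_kL^2_v}$ transforms the residual time integral into one of the shape $\int_0^T(1+t)^{-1-2\varepsilon}\,dt$, which is finite uniformly in $T$ only because of the strictly positive gap $\varepsilon>0$ built into $\sigma=3(1-1/p)-2\varepsilon$. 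This is the structural reason why both $\|f_0\|_{L^1_kL^2_v}$ and $\|f_0\|_{L^p_kL^2_v}$ must enter on the right-hand side of \eqref{ineq: micro time-weighted}.
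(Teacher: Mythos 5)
Your skeleton (the time-weighted Fourier energy identity, absorption of the nonlinear term exactly as in Lemma \ref{lem: micro a priori}, and reduction to the remainder $\CR$ produced by differentiating $(1+t)^{\sigma}$) coincides with the paper's. The genuine gap lies in the step you yourself single out as delicate: the low-frequency macroscopic piece of $\CR$. Interpolating the a priori decay $\Vert (a,b,c)(t)\Vert_{L^1_k}\lesssim \delta(1+t)^{-\sigma/2}$ from \eqref{apaweak} against the propagated $L^p_k$ bound cannot yield an integrand $(1+t)^{-1-2\varepsilon}$. Indeed, setting $A(k)=\sup_t(1+t)^{\sigma/2}\vert\widehat{(a,b,c)}(t,k)\vert$ and $B(k)=\sup_t\vert\widehat{(a,b,c)}(t,k)\vert$, any interpolation $\vert\widehat{(a,b,c)}(t,k)\vert^2\le (1+t)^{-\sigma\theta}A(k)^{2\theta}B(k)^{2(1-\theta)}$ leaves the time factor $(1+t)^{\sigma-1-\sigma\theta}$, and integrability requires $\sigma(1-\theta)<0$, i.e.\ $\theta>1$; even the endpoint $\theta=1$ gives only the log-divergent $(1+t)^{-1}$. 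The solution's own decay is simply not strong enough, which is why the paper does not invoke \eqref{apaweak} at all in this step. What the paper interpolates instead is the \emph{weight}: for $\vert k\vert\le1$ it writes $(1+t)^{\sigma-1}=(1+t)^{(1-\theta)\sigma}(\vert k\vert^2)^{1-\theta}\cdot(1+t)^{\theta(\sigma-\omega)}(\vert k\vert^2)^{-(1-\theta)}$ with $\omega=4-3/p-\varepsilon$ and $\theta=1/\omega$ (see \eqref{eq:InterpolationHard}), so Young's inequality gives $\eta(1+t)^{\sigma}\vert k\vert^2+C_\eta(1+t)^{-1-\varepsilon}\vert k\vert^{-2(1-\theta)/\theta}$; the first piece feeds the macro dissipation, while the second is closed by H\"older in $k$ (here $p'(1-\theta)/\theta=3-p'\varepsilon<3$, which is exactly where $p>3/2$ enters) against $\Vert f\Vert_{L^p_kL^\infty_TL^2_v}\le C\Vert f_0\Vert_{L^p_kL^2_v}$ from \eqref{pinfk}. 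The extra $\varepsilon$ of time decay is purchased with a singular but locally $L^{p'}$ power of $\vert k\vert$, not with the decay of $(a,b,c)$; note also that the paper does not even need to separate micro and macro parts in this low-frequency remainder.

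A secondary flaw: for the microscopic remainder you split $(1+t)^{\sigma-1}\le\eta(1+t)^{\sigma}+C_\eta(1+t)^{\sigma-2}$ and claim the residual is controlled by the non-weighted bounds \eqref{p1kcc} and \eqref{pinftykc}. For $p$ large (e.g.\ $p=\infty$, where $\sigma=3-2\varepsilon>2$) the factor $(1+t)^{\sigma-2}$ grows, and no non-time-weighted estimate can absorb it. The correct elementary split is the one-step Young inequality $(1+t)^{\sigma-1}\le\eta(1+t)^{\sigma}+C_\eta$, valid because $\sigma>1$; the $C_\eta$ piece is then bounded by $C_\eta\Vert(\mathbf{I}-\mathbf{P})f\Vert_{L^1_kL^2_TL^2_{v,D}}\le C_\eta\Vert f_0\Vert_{L^1_kL^2_v}$, which is how the paper handles the region $\vert k\vert\ge1$.
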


\begin{proof}
In fact, repeating the proof of Lemma \ref{lem: micro a priori}, one can similarly deduce
\begin{align*}
&\Vert  (1+t)^{\sigma/2} f\Vert_{L^1_k L^\infty_T L^2_v}  
+\Vert (1+t)^{\sigma/2} (\mathbf{I}-\mathbf{P})f \Vert_{L^1_k L^2_T L^2_{v,D}}\\
&\le C\Vert f_0\Vert_{L^1_k L^2_v} 
+C\int_{\mathbb{R}^3}\Big( \int^T_0 (1+t)^\sigma \vert (\widehat{\Gamma(f,f)}, (\mathbf{I}-\mathbf{P})\hat{f} )_{L^2_v} \vert dt\Big)^{1/2}  dk\\
&\quad+C\sqrt{\sigma} \int_{\mathbb{R}^3}\Big( \int^T_0 (1+t)^{\sigma -1} \Vert \hat{f}\Vert_{L^2_v}^2 dt\Big)^{1/2} dk.
\end{align*}
As in the proof of Lemma \ref{lem: micro a priori}, we have
\begin{align*}
&\int_{\mathbb{R}^3}\Big( \int^T_0 (1+t)^\sigma \vert (\widehat{\Gamma(f,f)}, (\mathbf{I}-\mathbf{P})\hat{f} )_{L^2_v} \vert dt\Big)^{1/2}  dk \\
&\le \eta \Vert (1+t)^{\sigma/2} (\mathbf{I}-\mathbf{P})f \Vert_{L^1_k L^2_T L^2_{v,D}} + C_\eta \Vert (1+t)^{\sigma/2} f\Vert_{L^1_k L^\infty_T L^2_v} \Vert f\Vert_{L^1_k L^2_T L^2_{v,D}}\\
&\le \eta \Vert (1+t)^{\sigma/2} (\mathbf{I}-\mathbf{P})f \Vert_{L^1_k L^2_T L^2_{v,D}} +C_\eta \Vert f_0\Vert_{L^1_k L^2_v}\Vert (1+t)^{\sigma/2} f\Vert_{L^1_k L^\infty_T L^2_v},
\end{align*}
where we have used \eqref{p1kcc} in the last inequality.
Both terms are absorbed if $\eta$ and $\Vert f_0\Vert_{L^1_k L^2_v}$ are sufficiently small.
Thus we are left to deduce a bound of
\begin{align}
&\int_{\mathbb{R}^3}\Big( \int^T_0 (1+t)^{\sigma -1} \Vert \hat{f}\Vert_{L^2_v}^2 dt\Big)^{1/2} dk \notag \\
&=\Big(\int_{\vert k\vert \le 1}+\int_{\vert k\vert \ge 1}\Big)\Big( \int^T_0 (1+t)^{\sigma -1} \Vert \hat{f}\Vert_{L^2_v}^2 dt\Big)^{1/2} dk.
\label{pdt3}
\end{align}

We first consider the integral term over the high-frequency part $|k|\geq 1$ in \eqref{pdt3}. For $|k|\geq 1$, one sees
\begin{align*}
\frac{|k|^2}{1+|k|^2}\sim 1,
\end{align*}
so that for $\eta>0$ arbitrarily small,
\begin{align}
 (1+t)^{\si-1}\leq \{\eta (1+t)^\si +C_\eta\}\frac{|k|^2}{1+|k|^2}. \notag
\end{align}
For the term with $\eta>0$, \eqref{pdt3} can be bounded as 
\begin{align}
&\eta \Big\|(1+t)^{\si/2} \frac{\vert \nabla_x \vert}{\langle \nabla_x \rangle } f\Big\|_{L^1_kL^2_TL^2_v}\notag\\&\leq \eta \Vert (1+t)^{\sigma/2} \frac{\vert \nabla_x \vert}{\langle \nabla_x \rangle }(\mathbf{I}-\mathbf{P})f \Vert_{L^1_k L^2_T L^2_{v}}+\eta \Big\Vert (1+t)^{\sigma/2} \frac{\vert \nabla_x \vert}{\langle \nabla_x \rangle }\mathbf{P} f \Big\Vert_{L^1_k L^2_T L^2_{v}}\notag\\
&\leq \eta  \Vert (1+t)^{\sigma/2}(\mathbf{I}-\mathbf{P})f \Vert_{L^1_k L^2_T L^2_{v,D}}+C\eta \Big\Vert (1+t)^{\sigma/2} \frac{\vert \nabla_x \vert}{\langle \nabla_x \rangle } (a,b,c) \Big\Vert_{L^1_k L^2_T},
\label{lem.mtw.pa1}
\end{align}
where the first term can be absorbed and the second term with a small coefficient $C\eta$ remains to be estimated in the next lemma. Similarly, for the term with $C_\eta$,  \eqref{pdt3} can be bounded as  
\begin{multline}\notag
C_\eta \Big\|\frac{\vert \nabla_x \vert}{\langle \nabla_x \rangle}f \Big\|_{L^1_kL^2_TL^2_v}
\leq C_\eta  \Big(\|(\mathbf{I}-\mathbf{P})f\|_{L^1_kL^2_TL^2_{v,D}}+\Big\|\frac{\vert \nabla_x \vert}{\langle \nabla_x \rangle}(a,b,c)\Big\|_{L^1_kL^2_T}\Big)\\
\leq C_\eta  \|f_0\|_{L^1_kL^2_v},
\end{multline}
where we have used \eqref{p1kcc} in the last inequality.

Now, we move to the estimate on the integral term over the low-frequency part $|k|\leq 1$ in \eqref{pdt3}. We look for the interpolation
\begin{align}\label{eq:InterpolationHard}
\si-1=[\si,\si-\omega]_\theta:=(1-\theta)\si +\theta (\si-\omega),\quad
\omega:=4-\frac{3}{p}-\varepsilon>1.
\end{align}
Then one has to take $\theta=1/\omega\in (0,1)$.
Applying the Young inequality with  $\eta>0$ arbitrarily small,
\begin{align*}
(1+t)^{\si-1}=(1+t)^{(1-\theta)\si}(|k|^2)^{1-\theta}\cdot (1+t)^{\theta (\sigma-\omega)} (|k|^2)^{-(1-\theta)}\\
\leq \eta (1+t)^\si |k|^2 +C_\eta (1+t)^{\si -\omega} (|k|^2)^{-\frac{1-\theta}{\theta}}.
\end{align*}
Once again, for the term with $\eta>0$ suitably small, thanks to $|k|^2\leq \frac{2|k|^2}{1+|k|^2}$ over $|k|\leq 1$, \eqref{pdt3} can be bounded in completely the same way as \eqref{lem.mtw.pa1}.  For the term with $C_\eta$, using the idea in \cite{KNN}, we can bound \eqref{pdt3}  by
\begin{align}\label{ineq:InterpolationHard}
C_\eta \Vert f\Vert_{L^p_kL^\infty_T L^2_v} \Big(\int_{|k|\leq 1 }  |k|^{-p'\frac{1-\theta}{\theta}}dk\Big)^{1/p'}\, \left(\int_0^T (1+t)^{\sigma-\omega} dt\right)^{\frac{1}{2}},
\end{align}
where we have used the H\"older inequality  for $1/p+1/p'=1$ with the standard convention $\infty'=1$.
Since
\begin{align*}
\sigma-\omega=-1-\varepsilon,\quad
p'\frac{1-\theta}{\theta}=p'\Big(3-\frac{3}{p}-\varepsilon\Big)
=3-p'\varepsilon<3,
\end{align*}
it follows that \eqref{ineq:InterpolationHard} is further bounded, using \eqref{pinfk}, by
$C_{\eta}  \Vert f_0 \Vert_{L^p_k L^2_v}$. 
This then completes the proof.
\end{proof}

Next, we need to further consider time-weighted estimates of the macroscopic part as they appear on the right-hand side of \eqref{ineq: micro time-weighted} with a coefficient that can be arbitrarily small. 

\begin{lemma}\label{lem.twemp}
Let $3/2<p\le \infty$ and $f$ be a solution to \eqref{eq: linearized BE}. Then, it holds
\begin{align}
&\Big \Vert (1+t)^{\frac{\sigma}{2}} \frac{\vert \nabla_x \vert}{\langle \nabla_x \rangle} (a,b,c) \Big\Vert_{L^1_k L^2_T}=\int_{\mathbb{R}^3} \frac{\vert k\vert}{\sqrt{1+\vert k\vert^2}} \Big( \int^T_0 (1+t)^\sigma \vert (\hat{a}, \hat{b}, \hat{c})\vert^2 dt\Big)^{1/2} dk \notag \\
&\le C\Vert f_0 \Vert_{L^1_k L^2_v}
+C\Vert (1+t)^{\sigma/2} f\Vert_{L^1_k L^\infty_T L^2_v}
+C\Vert (1+t)^{\sigma/2} (\mathbf{I}-\mathbf{P})f \Vert_{L^1_k L^2_T L^2_{v,D}} \notag \\
&+C\Vert (a,b,c)\Vert_{L^p_k L^\infty_T}+C\int_{\mathbb{R}^3} \Big( \int^T_0 (1+t)^\sigma \frac{\vert (\hat{H}, \mu^{1/4})\vert^2}{1+\vert k\vert^2} dt \Big)^{1/2} dk,
\label{ineq: macro time-weighted}
\end{align}
for any $T>0$.
\end{lemma}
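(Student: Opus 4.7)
The plan is to redo the argument of Lemma~\ref{lem: macro a priori} with the time weight $(1+t)^\sigma$ carried through, and then to absorb the resulting $(1+t)^{\sigma-1}$ remainder by the same high/low-frequency interpolation that appeared in the proof of Lemma~\ref{lem: micro time-weighted}. Concretely, starting from the identity underlying Lemma~\ref{lem: macro a priori}, which pointwise in $k$ controls $\frac{|k|^2}{1+|k|^2}|(\hat a,\hat b,\hat c)(t,k)|^2$ in terms of $\partial_t\mathcal{E}(t,k)$, the dissipation of $(\mathbf{I}-\mathbf{P})\hat f$, and a weighted source inner product, I would multiply by $(1+t)^\sigma$ and integrate in $t$. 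Integration by parts on the $\partial_t\mathcal{E}$ piece produces boundary contributions at $t=0$ and $t=T$ absorbed into $\|f_0\|_{L^1_kL^2_v}$ and $\|(1+t)^{\sigma/2}f\|_{L^1_kL^\infty_TL^2_v}$; the weighted dissipation $\|(1+t)^{\sigma/2}(\mathbf{I}-\mathbf{P})f\|_{L^1_kL^2_TL^2_{v,D}}$; the weighted nonlinear source, matching the last term of \eqref{ineq: macro time-weighted}; and finally the new remainder
\[
R(k) := \sigma\int_0^T (1+t)^{\sigma-1}\|\hat f(t,k)\|_{L^2_v}^2\,dt.
\]

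The real work is to bound the $L^1_k$-contribution of $R(k)$ inside the resulting energy inequality, and this is carried out by splitting into $|k|\geq 1$ and $|k|\leq 1$, mimicking the argument in Lemma~\ref{lem: micro time-weighted}. On $|k|\geq 1$, using $|k|^2/(1+|k|^2)\sim 1$, I would write $(1+t)^{\sigma-1}\leq\{\eta(1+t)^\sigma+C_\eta\}\frac{|k|^2}{1+|k|^2}$; after the decomposition $f=\mathbf{P}f+(\mathbf{I}-\mathbf{P})f$, the $\eta$-part generates small multiples of both the left-hand side of \eqref{ineq: macro time-weighted} and of $\|(1+t)^{\sigma/2}(\mathbf{I}-\mathbf{P})f\|_{L^1_kL^2_TL^2_{v,D}}$, while the $C_\eta$-part is bounded by the non-weighted estimate \eqref{p1kcc}. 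On $|k|\leq 1$, I would use the interpolation $\sigma-1=(1-\theta)\sigma+\theta(\sigma-\omega)$ with $\omega=4-3/p-\varepsilon$ and $\theta=1/\omega$, and Young's inequality then splits $(1+t)^{\sigma-1}$ into an $\eta(1+t)^\sigma|k|^2$ part (handled as above since $|k|\leq 1$) and a $C_\eta(1+t)^{\sigma-\omega}|k|^{-(1-\theta)/\theta}$ part, the latter being bounded by H\"older in $k$ with exponent $p$ as
\[
C_\eta\,\|f\|_{L^p_kL^\infty_TL^2_v}\Big(\int_{|k|\leq 1}|k|^{-p'(1-\theta)/\theta}\,dk\Big)^{1/p'}\Big(\int_0^T(1+t)^{\sigma-\omega}\,dt\Big)^{1/2},
\]
where both integrals are finite thanks to $p'(1-\theta)/\theta=3-p'\varepsilon<3$ and $\sigma-\omega=-1-\varepsilon<-1$. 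One last decomposition $\|f\|_{L^p_kL^\infty_TL^2_v}\leq \|(a,b,c)\|_{L^p_kL^\infty_T}+\|(\mathbf{I}-\mathbf{P})f\|_{L^p_kL^\infty_TL^2_v}$ isolates exactly the new term $\|(a,b,c)\|_{L^p_kL^\infty_T}$ appearing on the right-hand side of \eqref{ineq: macro time-weighted}, while the micro piece is absorbed via \eqref{pinfk}.

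The main obstacle is one of bookkeeping rather than of genuinely new ideas: one must check that every $\eta$-term produced by the two splittings lies on the left-hand side of \eqref{ineq: macro time-weighted}, or can be moved there via \eqref{p1kcc} and \eqref{pinfk}, and is thus absorbable once $\eta$ is taken small. This works because after both splits the macro contribution always comes with a factor $|k|^2/(1+|k|^2)$ that matches the symbol of $\frac{|\nabla_x|}{\langle\nabla_x\rangle}(a,b,c)$ on the left. The only piece of $R(k)$ which cannot be absorbed in this way is the low-frequency macroscopic remainder, which is precisely why the term $\|(a,b,c)\|_{L^p_kL^\infty_T}$ must appear on the right-hand side of \eqref{ineq: macro time-weighted}.
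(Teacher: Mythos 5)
Your proposal is correct and rests on the same mechanism as the paper's proof (time-weighting the macroscopic identities, then interpolating $(1+t)^{\sigma-1}$ between $(1+t)^{\sigma}|k|^2$ and an integrable-in-time, $L^{p'}_k$-integrable piece over $|k|\le 1$), but you organize the remainder differently, and the difference has two small consequences worth recording. In the paper the time weight is inserted into the cross-term identities of Lemma \ref{lem: macro a priori}, so the remainder is $\sigma(1+t)^{\sigma-1}$ times the three interaction functionals \eqref{macro time remainder1}--\eqref{macro time remainder3}, not $\sigma\int_0^T(1+t)^{\sigma-1}\Vert\hat f\Vert_{L^2_v}^2\,dt$. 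The first two are micro--macro products and are disposed of by Young's inequality alone, putting $\eta$ on the macro dissipation and $C_\eta$ on the weighted micro dissipation, both of which already appear in \eqref{ineq: macro time-weighted}; only the macro--macro term $(\hat b, ik\hat a/(1+|k|^2))$ needs the $t$-$k$ interpolation, and because it carries an extra factor $|k|$ the low-frequency weight is the milder $|k|^{2-\omega}$ and the H\"older step yields exactly $\Vert (a,b)\Vert_{L^p_kL^\infty_T}$, with no recourse to \eqref{p1kcc} or \eqref{pinfk}. Your coarser bound of all the cross terms by $C\Vert\hat f\Vert_{L^2_v}^2$ is legitimate, and the more singular weight $|k|^{-2(1-\theta)/\theta}$ it produces is still handled by the identical computation $p'(1-\theta)/\theta=3-p'\varepsilon<3$ from Lemma \ref{lem: micro time-weighted}, so your argument does close. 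The price is twofold: (i) you invoke \eqref{p1kcc} and \eqref{pinfk}, so your version of the lemma is proved only for the nonlinear problem under the a priori assumption \eqref{apaweak}, not for a general source $H$ as stated (the paper's proof is self-contained in this respect, although its companion Lemma \ref{lem: micro time-weighted} makes the same concession); and (ii) your right-hand side acquires an extra $C\Vert f_0\Vert_{L^p_kL^2_v}$ from the micro piece of $\Vert f\Vert_{L^p_kL^\infty_TL^2_v}$. Both are harmless for the proof of Theorem \ref{thm: MainHard}, where the lemma is applied with $H=\Gamma(f,f)$ under \eqref{apaweak} and where $\Vert (a,b,c)\Vert_{L^p_kL^\infty_T}$ is in any case bounded by $C\Vert f_0\Vert_{L^p_kL^2_v}$ via \eqref{pinfk}.
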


\begin{proof}
It suffices to make slight modifications to the proof of Lemma \ref{lem: macro a priori} by additionally considering the time-weighted estimates. For instance, as for obtaining the dissipation of $c$ in \eqref{ap.adp1}, we need to compute
\begin{align*}
(1+t)^\sigma \partial_t \Big(\Lambda_j ((\mathbf{I}-\mathbf{P})\hat{f}), \frac{ik_j \hat{c}}{1+\vert k\vert^2}\Big)
=\partial_t \Big[  (1+t)^\sigma \Big(\Lambda_j ((\mathbf{I}-\mathbf{P})\hat{f}), \frac{ik_j \hat{c}}{1+\vert k\vert^2}\Big)\Big]\\
-\sigma (1+t)^{\sigma-1} \Big(\Lambda_j ((\mathbf{I}-\mathbf{P})\hat{f}), \frac{ik_j \hat{c}}{1+\vert k\vert^2}\Big).
\end{align*}
By this observation, all the terms we have to estimate are
\begin{align}
&\Big[ \int^T_0 (1+t)^{\sigma-1} \Big(\Lambda_j ((\mathbf{I}-\mathbf{P})\hat{f}), \frac{ik_j \hat{c}}{1+\vert k\vert^2}\Big) dt \Big]^{1/2},\label{macro time remainder1}\\
&\Big[ \int^T_0 (1+t)^{\sigma-1} \Big( \Theta_{jm} ((\mathbf{I}-\mathbf{P})\hat{f}) +2\hat{c} \delta_{jm}, \frac{k_j \hat{b}_m +k_m \hat{b}_j}{1+\vert k\vert^2}\Big) dt \Big]^{1/2},\label{macro time remainder2}\\
&\Big[ \int^T_0 (1+t)^{\sigma-1} \big(\hat{b}, \frac{ik\hat{a}}{1+\vert k\vert^2}\big) dt \Big]^{1/2}, \label{macro time remainder3}
\end{align}
and the other terms can be handled as in the same way of the proof of Lemma \ref{lem: macro a priori}. First of all, 
\eqref{macro time remainder1} is easy to estimate. Indeed, since it is the product of $(\mathbf{I}-\mathbf{P})f$ and $\mathbf{P}f$, it follows
\begin{align*}
&\int^T_0 (1+t)^{\sigma-1} \Big(\Lambda_j ((\mathbf{I}-\mathbf{P})\hat{f}), \frac{ik_j \hat{c}}{1+\vert k\vert^2}\Big) dt \\
&\le \int^T_0 (1+t)^\sigma \Big( C_\eta \frac{\Vert (\mathbf{I}-\mathbf{P})\hat{f} \Vert_{L^2_{v,D}}^2}{1+\vert k\vert^2} + \eta\frac{\vert k\vert^2}{1+\vert k\vert^2} \vert \hat{c}\vert^2 \Big)dt.
\end{align*}
Likewise, for \eqref{macro time remainder2} it holds
\begin{align*}
 \int^T_0 (1+t)^{\sigma-1} \Big( \Theta_{jm} ((\mathbf{I}-\mathbf{P})\hat{f}) +2\hat{c} \delta_{jm}, \frac{i}{1+\vert k\vert^2}(k_j \hat{b}_m +k_m \hat{b}_j)\Big) dt\\
 \le C_\eta \int^T_0 (1+t)^\sigma \frac{\Vert (\mathbf{I}-\mathbf{P})\hat{f} \Vert_{L^2_{v,D}}^2}{1+\vert k\vert^2} dt + \eta   \int^T_0 (1+t)^\sigma \frac{\vert k\vert^2}{1+\vert k\vert^2} \vert \hat{b}\vert^2 dt
\end{align*}
for small $\eta>0$.
These terms with the coefficient $\eta$ can be absorbed in the end of the proof as in that of \ref{lem: macro a priori}.

Let us consider the last term \eqref{macro time remainder3}. If $\vert k\vert \ge 1$, we easily have
\begin{align*}
&\Big[ \int^T_0 (1+t)^{\sigma-1} \Big(\hat{b}, \frac{ik\hat{a}}{1+\vert k\vert^2}\Big) dt \Big]^{1/2} 
\le \Big[ \int^T_0 (1+t)^\sigma   \frac{\vert k\vert^2 }{1+\vert k\vert^2}\vert \hat{a}\vert \vert \hat{b}\vert dt \Big]^{1/2} \\
&\le \frac{\vert k \vert}{\sqrt{1+\vert k\vert^2}} \Big[ \eta \Big(\int^T_0 (1+t)^\sigma \vert \hat{a}\vert^2 dt\Big)^{1/2}
+C_\eta \Big( \int^T_0(1+t)^\sigma \vert \hat{b}\vert^2 dt\Big)^{1/2} \Big],
\end{align*}
where the term with coefficient $\eta$ can be absorbed in the end and the other term with coefficient $C_\eta$ can be absorbed by taking the suitable combination with other energy inequalities as we have done in the proof of Lemma \ref{lem: macro a priori}.  
If $\vert k\vert\le 1$, using the same technique as in \eqref{eq:InterpolationHard}, we make the interpolation
\begin{align*}
\frac{\vert k\vert}{1+\vert k\vert^2}(1+t)^{\sigma-1} 
&= \frac{1}{1+\vert k\vert^2} (1+t)^{\sigma (1-\theta)} \vert k\vert^{2(1-\theta)} 
\cdot (1+t)^{\theta (\sigma-\omega)}  \vert k \vert^{1-2(1-\theta)}\\
&\le \frac{\eta^2}{1+\vert k\vert^2}  (1+t)^\sigma  \vert k\vert^2+\frac{C_\eta^2}{1+\vert k\vert^2} (1+t)^{-1-\varepsilon} \vert k\vert^{2-\omega},
\end{align*}
where 
$\omega$ is given in \eqref{eq:InterpolationHard}.
As before, both terms are correspondingly estimated as 
\begin{align*}
&\Big[ \int^T_0  \frac{\eta^2}{1+\vert k\vert^2}  (1+t)^\sigma  \vert k\vert^2 \vert \hat{a}\vert \vert \hat{b}\vert dt \Big]^{1/2} 
\le \frac{\eta \vert k \vert}{\sqrt{1+\vert k\vert^2}}\Big(\int^T_0 (1+t)^\sigma (\vert \hat{a}\vert^2+\vert \hat{b}\vert^2) dt\Big)^{1/2},
\end{align*}
and 
\begin{align*}
&\int_{\vert k\vert\le 1}\Big[ \int^T_0 \frac{C_\eta^2}{1+\vert k\vert^2} (1+t)^{-1-\varepsilon} \vert k\vert^{{2-\omega}} \vert \hat{a}\vert \vert \hat{b}\vert dt\Big]^{1/2} dk
\le C_\eta\Vert (a,b)\Vert_{L^p_k L^\infty_T},
\end{align*}
thanks to $p'(2-\omega)<3$ for $p>3/2$.
In sum, taking $\eta>0$ small enough, we are able to derive
\begin{align*}
&\Big \Vert (1+t)^{\frac{\sigma}{2}} \frac{\vert \nabla_x \vert}{\langle \nabla_x \rangle} (a,b,c) \Big\Vert_{L^1_k L^2_T}\\
&\le C\Vert f_0 \Vert_{L^1_k L^2_v}
+C\Vert (1+t)^{\sigma/2} f\Vert_{L^1_k L^\infty_T L^2_v}
+C\Vert (1+t)^{\sigma/2} (\mathbf{I}-\mathbf{P})f \Vert_{L^1_k L^2_T L^2_{v,D}}\\
&+C\int_{\mathbb{R}^3} \Big( \int^T_0 (1+t)^\sigma \frac{\vert(\Lambda, \Theta)(\hat{\mathbbm{r}}+\hat{\mathbbm{h}})\vert}{1+\vert k\vert^2}dt\Big)^{1/2} dk 
+C\Vert (a,b,c)\Vert_{L^p_k L^\infty_T}.
\end{align*}
Furthermore, making similar calculations of Lemmas \ref{lem: inner product Gamma} and \ref{lem: inner product L} gives
\begin{align*}
&\int_{\mathbb{R}^3} \Big( \int^T_0 (1+t)^\sigma \frac{\vert(\Lambda, \Theta)(\hat{\mathbbm{r}}+\hat{\mathbbm{h}})\vert}{1+\vert k\vert^2}dt\Big)^{1/2} dk\\
&\le C\Vert (1+t)^{\sigma/2} (\mathbf{I}-\mathbf{P})f\Vert_{L^1_k L^2_t L^2_{v,D}}
+C\int_{\mathbb{R}^3} \Big( \int^T_0 (1+t)^\sigma \frac{\vert (\hat{H}, \mu^{1/4})\vert^2}{1+\vert k\vert^2} dt \Big)^{1/2} dk.
\end{align*}
Substituting this inequality, we obtain the desired result \eqref{ineq: macro time-weighted}. The proof of Lemma \ref{lem.twemp} is complete.
\end{proof}

\begin{proof}[Proof of Theorem \ref{thm: MainHard}]
Note that \eqref{thm.hc.c2} directly follows from \eqref{pinfk}. To further show \eqref{thm.hc.c1}, combining \eqref{ineq: micro time-weighted} and \eqref{ineq: macro time-weighted}, we have
\begin{align*}
&\Vert  (1+t)^{\sigma/2} f\Vert_{L^1_k L^\infty_T L^2_v}  
+\Vert (1+t)^{\sigma/2} (\mathbf{I}-\mathbf{P})f \Vert_{L^1_k L^2_T L^2_{v,D}}+\Big \Vert (1+t)^{\frac{\sigma}{2}} \frac{\vert \nabla_x \vert}{\langle \nabla_x \rangle} (a,b,c) \Big\Vert_{L^1_k L^2_T}\\
&\le C\Vert f_0\Vert_{L^1_k L^2_v} +C\Vert f_0\Vert_{L^p_k L^2_v} 
+ C \Vert (1+t)^{\sigma/2} f\Vert_{L^1_k L^\infty_T L^2_v} \Vert f\Vert_{L^1_k L^2_T L^2_{v,D}}\\
& +C\int_{\mathbb{R}^3} \Big( \int^T_0 (1+t)^\sigma \frac{\vert (\hat{H}, \mu^{1/4})\vert^2}{1+\vert k\vert^2} dt \Big)^{1/2} dk,
\end{align*}
where we have applied \eqref{pinfk} to bound
\begin{equation}\notag
\Vert (a,b,c)\Vert_{L^p_k L^\infty_T}\leq  C\Vert \mathbf{P}f\Vert_{L^p_k L^\infty_TL^2_v}\leq C\Vert f\Vert_{L^p_k L^\infty_TL^2_v}\leq C \Vert f_0\Vert_{L^p_k L^2_v}.
\end{equation}
Following the proof of Lemma \ref{lem: inner product Gamma}, with $H=\Gamma(f,f)$ we have
\begin{align*}
&\int_{\mathbb{R}^3} \Big( \int^T_0 (1+t)^\sigma \frac{\vert (\hat{H}, \mu^{1/4})\vert^2}{1+\vert k\vert^2} dt \Big)^{1/2} dk \\
&\le C  \int_{\mathbb{R}^3} \int_{\mathbb{R}^3} \Big( \int^T_0 (1+t)^\sigma \Vert \hat{f}(k-\ell)\Vert_{L^2_v}^2 \Vert \hat{f}(\ell) \Vert_{L^2_{v,D}}^2  dt \Big)^{1/2} d\ell dk\\
&\le C \Vert (1+t)^{\sigma/2} f\Vert_{L^1_k L^\infty_T L^2_v} \Vert f\Vert_{L^1_k L^2_T L^2_{v,D}},
\end{align*}
which is further bounded by
\begin{equation*}
C\delta  \Vert (1+t)^{\sigma/2} f\Vert_{L^1_k L^\infty_T L^2_v} 
+C \Vert f_0 \Vert_{L^1_k L^2_v}  \Vert (1+t)^{\sigma/2} f\Vert_{L^1_k L^\infty_T L^2_v},
\end{equation*}
by \eqref{apaweak} and \eqref{p1kc}.
These terms are absorbed under the smallness assumption.

We then conclude from the whole estimates that
\begin{align*}
&\Vert  (1+t)^{\frac{\sigma}{2}} f \Vert_{L^1_k L^\infty_T L^2_v} 
+\Vert   (1+t)^{\frac{\sigma}{2}} (\mathbf{I}-\mathbf{P}) f \Vert_{L^1_k L^2_T L^2_{v,D}}
+\Big \Vert (1+t)^{\frac{\sigma}{2}} \frac{\vert \nabla_x \vert}{\langle \nabla_x \rangle} (a,b,c) \Big\Vert_{L^1_k L^2_T}\\
&\leq C \|f_0\|_{L^1_kL^2_v}+C  \Vert f_0 \Vert_{L^p_k L^2_v},
\end{align*}
which gives \eqref{thm.hc.c1}. 
Therefore, to close the a priori assumption \eqref{apaweak}, it suffices to assume 
\begin{align*}
 \|f_0\|_{L^1_kL^2_v}+\Vert f_0 \Vert_{L^p_k L^2_v}
\end{align*}
is sufficiently small. We then have proved the global existence and large-time behavior of solutions for the case of hard potentials. 
\end{proof}

\section{A priori estimates for the soft potential case}

In this section we shall prove Theorem \ref{thm: MainSoft} regarding the global existence and time decay of solutions for the soft potential case $\gamma+2s<0$ with a technical restriction $\gamma>\max\{-3, -3/2-2s\}$. Throughout the section, we always assume these two conditions on the collision kernel. Recall that in the hard potential case, the fact that $ \Vert \cdot \Vert_{L^2_v}\lesssim \Vert \cdot \Vert_{L^2_{\gamma/2+s}}\lesssim \Vert \cdot \Vert_{L^2_{v,D}}$ has played a crucial role. However, for the soft potential case, such property is not true and we need velocity-weighted estimates to close the a priori estimates. In the meantime, the application of the idea of \cite{KNN} based on the time-frequency interpolation becomes more complicated since the degenerate velocity variable is involved.  

We recall the weight function
\begin{align}\label{Weight Def Re}
w_{\ell,q}(v)=
\begin{cases}
\langle v \rangle^{\ell \vert \gamma+2s\vert},
&\ell>0,\ q=0\quad (\text{polynomial\ weight}),\\
e^{q\langle v\rangle/4}, 
&\ell=0,\ q>0\quad (\text{exponential\ weight}),\\
\langle v \rangle^{\ell \vert \gamma+2s\vert} e^{q\langle v\rangle/4},
&\ell>0,\ q>0\quad (\text{mixed\ weight}).
\end{cases}
\end{align}
We mainly consider the mixed weights in the following because the other weights can be recovered by virtually setting $q=0$ or $\ell=0$.

We list two technical lemmas.
\begin{lemma}[Weighted estimates for $L$]\label{wgesL}
Let $L$ be given by \eqref{LinearOp}, then we have the estimate
\begin{equation*}
\left( Lg,w^{2}_{\ell,q}g\right)_{L^2_v}\geq
\delta_q\Vert w_{\ell,q}g \Vert^2_{L^2_{v,D}}
-C\Vert g\Vert_{L^2_v({B_R})}^2,
\end{equation*}
where $\de_q$, $C>0$, and  $B_R$ denotes the closed ball in $\R^3_v$ with center at the origin and radius $R>0$.
Here $w_{\ell,q}$ is given as in \eqref{Weight Def Re}.
\end{lemma}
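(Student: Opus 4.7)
The plan is to follow the standard reduction used in the non-cutoff Boltzmann literature (as in \cite{AMUXY-2012-JFA, GS}) for weighted coercivity of $L$: transfer the weight inside the inner product, invoke the unweighted coercivity, and control the resulting commutator by a localized error term. Concretely, I would begin by writing
\begin{align*}
(Lg, w_{\ell,q}^2 g)_{L^2_v} = (L(w_{\ell,q}g), w_{\ell,q} g)_{L^2_v} + ([w_{\ell,q}, L]g, w_{\ell,q} g)_{L^2_v},
\end{align*}
and apply the unweighted coercivity $(Lh,h)_{L^2_v} \geq \delta_0 \Vert (\mathbf{I}-\mathbf{P})h\Vert_{L^2_{v,D}}^2$ from \cite[Proposition 2.1]{AMUXY-2012-JFA} with $h = w_{\ell,q} g$ to extract the principal dissipation $\delta_0 \Vert (\mathbf{I}-\mathbf{P})(w_{\ell,q} g)\Vert_{L^2_{v,D}}^2$.

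Next, since $\mathbf{P}$ projects onto a finite-dimensional subspace of Gaussian-weighted polynomials, $\mathbf{P}(w_{\ell,q}g)$ is a Schwartz function whose $L^2_{v,D}$ norm is controlled in terms of $\int_{B_R} |g|^2\,dv$ plus $\eta \Vert w_{\ell,q} g \Vert_{L^2_{v,D}}^2$ for any $\eta>0$ upon taking $R$ large, which upgrades the dissipation from $(\mathbf{I}-\mathbf{P})(w_{\ell,q} g)$ to the full $w_{\ell,q} g$ at the cost of a local remainder. This step relies on the fact that the kernel coefficients of $\mathbf{P}$ carry factors of $\mu^{1/2}$, so that $\mathbf{P}(w_{\ell,q} g)$ decays faster than any polynomial regardless of $w_{\ell,q}$.

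The commutator $[w_{\ell,q}, L]g$ is the heart of the argument. Using the gain-loss decomposition of the nonlinear operator and the standard pointwise bound
\begin{align*}
\vert w_{\ell,q}(v') - w_{\ell,q}(v) \vert \lesssim \theta \,|v-v_*|\, \widetilde{w}_{\ell,q}(v,v_*),
\end{align*}
together with the Taylor-remainder estimates developed in \cite{AMUXY-2012-JFA}, I would bound this commutator by $\eta \Vert w_{\ell,q} g \Vert_{L^2_{v,D}}^2 + C_\eta \Vert g\Vert_{L^2_v(B_R)}^2$. For the polynomial part of $w_{\ell,q}$ the estimate is classical; for the exponential factor $e^{q\langle v\rangle/4}$ one uses $\nabla_v \log w_{\ell,q}$ bounded to write the commutator contribution proportional to $q$, which is precisely why the resulting constant $\de_q$ depends on $q$. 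Absorbing the $\eta$-terms back into the left-hand side and choosing $\eta$ small yields the stated inequality.

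The main obstacle I expect is the quantitative control of the commutator $[w_{\ell,q},L]$ for the exponential weight in the grazing regime $\theta \to 0$, since the singularity $\theta^{-1-2s}$ interacts with both the velocity increment and the rapid growth of $e^{q\langle v\rangle/4}$. This is the reason one has to keep track of $q$ in the final coercivity constant $\de_q$ and why, in practice, the proof is quoted from (or reduced to) results such as \cite[Lemma 2.15]{AMUXY-2012-JFA} and the corresponding weighted commutator estimates already established in the proof of Proposition \ref{DLSS Existence} in \cite{DLSS}.
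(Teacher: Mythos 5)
The paper offers no proof of Lemma \ref{wgesL}: it is quoted from \cite{GS,AMUXY-2012-JFA,DLYZ-VMB}, with the remark that the mixed polynomial--exponential weight is a direct modification of the arguments there and that the remainder is later absorbed via $\Vert g\Vert_{L^2_v(B_R)}\le C_R\Vert g\Vert_{L^2_{v,D}}$. Your sketch reconstructs the standard argument underlying those references (transfer the weight, invoke the microscopic coercivity for $w_{\ell,q}g$, recover the full dissipation by controlling the finite-dimensional part $\mathbf{P}(w_{\ell,q}g)$ through a localized term, and estimate the weight commutator), and you end by deferring to the same sources, so your route is consistent with the paper's. The cited works organize the computation somewhat differently (a norm-part/compact-part splitting of the quadratic form rather than a commutator against the $(\mathbf{I}-\mathbf{P})$-coercivity), but the two are morally equivalent and your version is acceptable as a reduction.

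Two caveats. First, the displayed pointwise bound $\vert w_{\ell,q}(v')-w_{\ell,q}(v)\vert\lesssim\theta\,\vert v-v_*\vert\,\widetilde w_{\ell,q}$ gains only one power of $\theta$; against the singularity $\sin\theta\, b(\cos\theta)\simeq\theta^{-1-2s}$ this is integrable only for $s<1/2$. For $s\ge 1/2$ one must use the second-order Taylor expansion together with the vanishing of the first-order term upon angular integration (the cancellation lemmas of \cite{GS} and \cite{AMUXY-2012-JFA}); you allude to ``Taylor-remainder estimates,'' but the bound as written would not close in the strong-singularity range. Second, \cite[Lemma 2.15]{AMUXY-2012-JFA} is the bound $\vert(\Gamma(f,\mu^{1/2}),\phi)_{L^2_v}\vert\lesssim\Vert\mu^{10^{-3}}f\Vert_{L^2_v}$, which this paper uses in Lemma \ref{lem: inner product L}; it is not the weighted coercivity estimate, so that final citation is misplaced --- the relevant statements are the weighted coercivity lemmas of \cite{GS,AMUXY-2012-JFA,DLYZ-VMB} themselves.
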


For the proof of this lemma, readers may refer to \cite{GS, AMUXY-2012-JFA, DLYZ-VMB} where they consider either the purely polynomial or exponential cases. Though the mixed-weight case is not considered in those papers, it is direct to modify their methods for the purpose. 
Notice that there exists $C_R$ such that $\Vert g \Vert_{L^2_v(B_R)} \le C_R \Vert g\Vert_{L^2_{v,D}}$, which we will use to bound the remainder.
\begin{lemma}[Weighted estimates for $\Gamma$]
Assume $\gamma+2s<0$. Then, for the weight function $w_{\ell,q}$ in \eqref{Weight Def Re}, it holds
\begin{multline}
\left|\left( \Gamma(f,g), w^2_{\ell,q} h\right)_{L^2_{v}}\right|
\lesssim
\\
\left\{\left\|\langle v\rangle^{\gamma/2+s}w_{\ell,q}f\right\|_{L^2_v}
\Vert g\Vert_{L^2_{v,D}}+\left\|\langle v\rangle^{\gamma/2+s} g\right\|_{L^2_v}
\Vert w_{\ell,q}f\Vert_{L^2_{v,D}}\right\}
\Vert w_{\ell,q}h\Vert_{L^2_{v,D}}
\\
+\min\left\{\left\|w_{\ell,q}f\right\|_{L^2_v}
\left\|\langle v\rangle^{\gamma/2+s}g\right\|_{L^2_v},\left\|g\right\|_{L^2_v}
\left\|\langle v\rangle^{\gamma/2+s}w_{\ell,q}f\right\|_{L^2_v}\right\}
\Vert w_{\ell,q}h\Vert_{L^2_{v,D}}
\\
+\left\|w_{\ell,q}g\right\|_{L^2_v}
\left\|\langle v\rangle^{\gamma/2+s}w_{\ell,q}f\right\|_{L^2_v}
\Vert w_{\ell,q}h\Vert_{L^2_{v,D}}.
\label{bnpbl}
\end{multline}
\end{lemma}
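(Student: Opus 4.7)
The plan is to upgrade the unweighted trilinear estimate \eqref{ineq:Trilinear} to the weighted version \eqref{bnpbl} by sliding the weight $w_{\ell,q}^2$ inside the inner product onto $f$ and $g$, via the commutator decomposition standard for the non-cutoff Boltzmann operator (cf.~\cite{Str12, AMUXY-2012-JFA, DLYZ-VMB}). First, I would rewrite $(\Gamma(f,g), w_{\ell,q}^2 h)_{L^2_v}$ in collisional integral form and insert the identity
\begin{equation*}
w_{\ell,q}^2(v') = w_{\ell,q}(v')w_{\ell,q}(v) + w_{\ell,q}(v')\bigl[w_{\ell,q}(v')-w_{\ell,q}(v)\bigr],
\end{equation*}
thereby splitting the resulting integral into a \emph{balanced} piece, which after a pre-post collisional change of variables has the same bilinear structure as the original $\Gamma$ but with weighted arguments $w_{\ell,q}g$ and $w_{\ell,q}h$, and a \emph{commutator} piece involving $w_{\ell,q}(v')-w_{\ell,q}(v)$. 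Applying \eqref{ineq:Trilinear} to the balanced piece, and redistributing one factor of $\langle v\rangle^{\gamma/2+s}$ to convert between $L^2_v$ and $L^2_{v,D}$ norms, yields the first line of \eqref{bnpbl} directly.

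The commutator piece is then treated via the pointwise controls
\begin{align*}
\bigl|\langle v'\rangle^{\ell|\gamma+2s|}-\langle v\rangle^{\ell|\gamma+2s|}\bigr| &\lesssim \sin(\theta/2)\,|v-u|\,(\langle v\rangle + \langle v'\rangle)^{\ell|\gamma+2s|-1}, \\
\bigl|e^{q\langle v'\rangle/4}-e^{q\langle v\rangle/4}\bigr| &\lesssim \sin(\theta/2)\,|v-u|\,\bigl(e^{q\langle v\rangle/4}+e^{q\langle v'\rangle/4}\bigr),
\end{align*}
combined via the product rule for the mixed weight. The extra factor $\sin(\theta/2)$ kills one power of the angular singularity in $B \simeq |v-u|^\gamma \theta^{-1-2s}$, so that after applying Cauchy--Schwarz in $(v,u,\sigma)$ and invoking the standard Carleman-type change of variables, the commutator is dominated by expressions of the shape $\|(\text{weighted}\,f)\|_{L^2_v}\,\|(\text{weighted}\,g)\|_{L^2_v}\,\|w_{\ell,q}h\|_{L^2_{v,D}}$. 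The freedom in whether to land the remaining $\langle v\rangle^{\gamma/2+s}$ factor on $f$ or on $g$ produces exactly the $\min$ in the second line of \eqref{bnpbl}, while the \emph{double-commutator} remainder, in which $w_{\ell,q}$ acts on both $g(v')$ and $g(v)$, contributes the last line.

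The main obstacle will be the bookkeeping for the mixed weight, which requires simultaneously controlling polynomial and exponential commutators without losing integrability either at large relative velocity or at grazing angles. This is precisely where the hypothesis $\gamma > -3/2-2s$ enters: combined with $\mu^{1/2}(u)$ and the $\sin(\theta/2)$ gain, it ensures that the angular and velocity integrals in the commutator converge uniformly in the weight exponent, and the integration by parts moving derivatives of $w_{\ell,q}$ onto $\langle v\rangle^{\gamma/2+s}w_{\ell,q}$ is admissible. Modulo these delicate but by-now-classical angular estimates, the mixed case reduces by the product-rule splitting above to the polynomial and exponential cases already established in \cite{AMUXY-2012-JFA, DLYZ-VMB}.
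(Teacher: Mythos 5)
The paper itself does not prove this lemma; it only cites \cite[Lemma 2.3]{DLYZ-VMB} and \cite[Lemma 2.4]{FLLZ-2018}, so your sketch has to be measured against those references rather than against an argument in the text. Your overall strategy --- peel one factor of $w_{\ell,q}$ across the inner product, and control the commutator $w_{\ell,q}(v')-w_{\ell,q}(v)$ pointwise by $\sin(\theta/2)\,|v-u|$ times comparable weights so that the angular kernel improves from $\theta^{-1-2s}$ to the integrable $\theta^{-2s}$ --- is indeed the standard route taken there, and your pointwise commutator bounds for the polynomial and exponential factors are correct.

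There is, however, a concrete gap in your treatment of the main term. Applying \eqref{ineq:Trilinear} to the balanced piece $(\Gamma(f,w_{\ell,q}g),w_{\ell,q}h)_{L^2_v}$ yields
$\Vert f\Vert_{L^2_v}\,\Vert w_{\ell,q}g\Vert_{L^2_{v,D}}\,\Vert w_{\ell,q}h\Vert_{L^2_{v,D}}$, i.e.\ the weight sits inside the \emph{dissipation} norm of the second argument. The first line of \eqref{bnpbl} is structurally different: there the dissipation norm of $g$ is \emph{unweighted}, and the full weight $w_{\ell,q}$ has migrated onto $f$, measured in a $\langle v\rangle^{\gamma/2+s}$-weighted $L^2_v$ norm. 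Since $\Vert\langle v\rangle^{\gamma/2+s}\,\cdot\,\Vert_{L^2_v}\lesssim\Vert\cdot\Vert_{L^2_{v,D}}$ holds only in one direction, ``redistributing one factor of $\langle v\rangle^{\gamma/2+s}$'' cannot convert your bound into the stated one: $\Vert w_{\ell,q}g\Vert_{L^2_{v,D}}$ is not controlled by any combination of the norms appearing on the right of \eqref{bnpbl}, and this distinction matters precisely in the soft-potential application, where one cannot afford a weighted dissipation norm on the second slot. Obtaining the stated weight distribution requires working directly on the collision integral: one decomposes into $|v-u|\le 1$ and $|v-u|\ge 1$ (and further in $\theta$), and uses the collisional energy identity together with the factor $\mu^{1/2}(u)$ to exchange powers of $\langle u\rangle$, $\langle u'\rangle$, $\langle v\rangle$, $\langle v'\rangle$; this is how the weight lands on $f$ and how the $\min$ in the second line and the final term of \eqref{bnpbl} arise in the cited lemmas. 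Your commutator analysis is fine, but the balanced piece needs this finer argument rather than a direct appeal to \eqref{ineq:Trilinear}.
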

Proofs of this lemma can be found \cite[Lemma 2.3; Lemma 2.4]{DLYZ-VMB, FLLZ-2018}.

\subsection{$L^1_k$ and $L^p_k$ estimates with velocity weight}

Using the above lemmas, we first show an $L^1_k$-energy estimate of the macro and micro parts combined.
\begin{lemma}\label{lem: L^1 soft}
Assume \eqref{apaweak}. It holds
\begin{align*}
\mathcal{E}_1(T)+\mathcal{D}_1(T)\le C\Vert w_{\ell,q} f_0 \Vert_{L^1_k L^2_v},
\end{align*}
where the energy term $\mathcal{E}_1(T)$ and the dissipation term $\mathcal{D}_1(T)$ are defined as
\begin{align}
\mathcal{E}_1(T)&= \Vert f \Vert_{L^1_k L^\infty_T L^2_v} + \Vert w_{\ell,q} (\mathbf{I}-\mathbf{P})f \Vert_{L^1_{\vert k\vert\le 1} L^\infty_T L^2_v} + \Vert w_{\ell,q} f \Vert_{L^1_{\vert k\vert \ge 1} L^\infty_T L^2_v}, \label{eq:SoftL^1Energy}\\
\mathcal{D}_1(T) &= \Vert (\mathbf{I}-\mathbf{P})f\Vert_{L^1_k L^2_T L^2_{v,D}} 
+ \Vert w_{\ell,q} (\mathbf{I}-\mathbf{P})f \Vert_{L^1_{\vert k\vert\le 1} L^2_T L^2_{v,D}}
+  \Vert w_{\ell,q} f \Vert_{L^1_{\vert k\vert \ge 1} L^2_T L^2_{v,D}}\notag \\
&+ \Big\Vert \frac{\vert \nabla_x \vert}{\langle \nabla_x \rangle} (a,b,c) \Big\Vert_{L^1_k L^2_T},
\label{eq:SoftL^1Diss}
\end{align}
respectively.
\end{lemma}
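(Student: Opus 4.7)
The plan is to establish the seven components of $\mathcal{E}_1(T)+\mathcal{D}_1(T)$ via three separate $L^1_k$-type energy estimates on the Fourier side and then assemble them. Concretely, I will carry out (i) an unweighted macro-micro bound handling $\|f\|_{L^1_kL^\infty_TL^2_v}$, $\|(\mathbf{I}-\mathbf{P})f\|_{L^1_kL^2_TL^2_{v,D}}$ and $\|\frac{|\nabla_x|}{\langle\nabla_x\rangle}(a,b,c)\|_{L^1_kL^2_T}$; (ii) a weighted high-frequency bound delivering $\|w_{\ell,q}f\|_{L^1_{|k|\ge 1}L^\infty_T L^2_v}$ and $\|w_{\ell,q}f\|_{L^1_{|k|\ge 1}L^2_TL^2_{v,D}}$; and (iii) a weighted low-frequency microscopic bound delivering $\|w_{\ell,q}(\mathbf{I}-\mathbf{P})f\|_{L^1_{|k|\le 1}L^\infty_TL^2_v}$ and $\|w_{\ell,q}(\mathbf{I}-\mathbf{P})f\|_{L^1_{|k|\le 1}L^2_TL^2_{v,D}}$. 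The frequency split is natural: the macroscopic projection $\mathbf{P}\hat{f}$ is intrinsically Gaussian in $v$, so weighting it brings no new information, whereas its dissipation degenerates precisely on $|k|\le 1$ where the factor $\frac{|k|}{\langle k\rangle}$ is missing.

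For step (i), I note that Lemmas \ref{ineq: Boltzmann nonlinear}, \ref{lem: micro a priori} and \ref{lem: macro a priori} are proved under \eqref{ass.add} alone, hence apply verbatim in the soft case; combined with the a priori assumption \eqref{apaweak} (which yields $\|\mathbf{P}f\|_{L^1_kL^2_TL^2_{v,D}}\le C\delta$ because $\sigma>1$) and the implicit smallness $\|f_0\|_{L^1_kL^2_v}\le \|w_{\ell,q}f_0\|_{L^1_kL^2_v}$, the argument of Lemma \ref{lem: micro est hard} reproduces (i). For step (ii), at each $|k|\ge 1$ I multiply \eqref{eq: fourier} by $w_{\ell,q}^2\bar{\hat f}$ and take the real part, killing the transport term; Lemma \ref{wgesL} provides the coercivity $\delta_q\|w_{\ell,q}\hat{f}\|^2_{L^2_{v,D}}-C\|\hat{f}\|^2_{L^2(B_R)}$, while $\|\hat{f}\|^2_{L^2(B_R)}\lesssim \|(\mathbf{I}-\mathbf{P})\hat{f}\|^2_{L^2_{v,D}}+|(\hat a,\hat b,\hat c)|^2$. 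The micro remainder is already handled by (i), and on $|k|\ge 1$ we have $|(\hat a,\hat b,\hat c)|\lesssim \frac{|k|}{\langle k\rangle}|(\hat a,\hat b,\hat c)|$, so the macro remainder is also absorbed by the macro dissipation of (i). Time integration, square rooting, and integration in $L^1_{|k|\ge 1}$ then close this stage.

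Step (iii) is the most delicate. Applying $\mathbf{I}-\mathbf{P}$ to \eqref{eq: fourier} gives
\begin{equation*}
\partial_t(\mathbf{I}-\mathbf{P})\hat{f}+iv\cdot k\,(\mathbf{I}-\mathbf{P})\hat{f}+L(\mathbf{I}-\mathbf{P})\hat{f}=\hat{\Gamma}(\hat{f},\hat{f})+\mathbf{P}(iv\cdot k\hat{f})-iv\cdot k\,\mathbf{P}\hat{f}.
\end{equation*}
Pairing with $w_{\ell,q}^2\overline{(\mathbf{I}-\mathbf{P})\hat{f}}$ and taking the real part, the diagonal $iv\cdot k(\mathbf{I}-\mathbf{P})\hat{f}$ contribution vanishes since $\int v\cdot k\,|(\mathbf{I}-\mathbf{P})\hat{f}|^2 w_{\ell,q}^2\,dv$ is real. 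The macro-micro coupling $\mathbf{P}(iv\cdot k\hat{f})-iv\cdot k\,\mathbf{P}\hat{f}$ is $|k|$ times Schwartz functions in $v$ whose coefficients are bounded by $|(\hat a,\hat b,\hat c)|$ plus weighted $L^2_v$ moments of $(\mathbf{I}-\mathbf{P})\hat{f}$; by Cauchy-Schwarz and Young's inequality a small portion is absorbed into $\|w_{\ell,q}(\mathbf{I}-\mathbf{P})\hat{f}\|^2_{L^2_{v,D}}$, while the remainder is of type $C_\eta |k|^2(|(\hat a,\hat b,\hat c)|^2+\|(\mathbf{I}-\mathbf{P})\hat{f}\|_{L^2_{v,D}}^2)$, in which $|k|^2\le\left(\frac{|k|}{\langle k\rangle}\right)^2\cdot 2$ on $|k|\le 1$ hands control back to (i). The weighted nonlinear contribution is estimated by inserting \eqref{bnpbl} pointwise in $k$ inside the Fourier-convolution representation of $\hat{\Gamma}$ as in the proof of Lemma \ref{ineq: Boltzmann nonlinear}, followed by Minkowski in $\ell$ and Cauchy-Schwarz in $t$; each resulting product pairs a factor from $\mathcal{E}_1(T)$ with either $\|(\mathbf{I}-\mathbf{P})f\|_{L^1_kL^2_TL^2_{v,D}}$ (small by (i)) or $\|\mathbf{P}f\|_{L^1_kL^2_TL^2_{v,D}}$ (small by \eqref{apaweak}). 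I expect this bookkeeping to be the main obstacle, since \eqref{bnpbl} generates four terms mixing the weights $w_{\ell,q}$ and $\langle v\rangle^{\gamma/2+s}$, and each must be matched either to a piece of $\mathcal{D}_1(T)$ or to the unweighted dissipation from (i) (using $\|\langle v\rangle^{\gamma/2+s}g\|_{L^2_v}\le C\|g\|_{L^2_{v,D}}$). Summing (i)-(iii), choosing $\eta$ small, and absorbing the residual small-coefficient terms using \eqref{apaweak} together with the smallness of $\|w_{\ell,q}f_0\|_{L^1_kL^2_v}$ then yields the claimed bound.
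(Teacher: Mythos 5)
Your proposal follows essentially the same route as the paper: the unweighted estimates and macroscopic bound from the hard-potential section (valid under \eqref{ass.add}), Strain's frequency splitting with the weighted high-frequency estimate for $w_{\ell,q}f$ and the weighted low-frequency estimate for $w_{\ell,q}(\mathbf{I}-\mathbf{P})f$ via Lemma \ref{wgesL} and \eqref{bnpbl}, followed by a linear combination, absorption of the $L^2_v(B_R)$ and coupling terms into $\mathcal{D}_1(T)$, and the macro-micro decomposition of the nonlinear terms closed by \eqref{apaweak} and smallness of the data. This matches the paper's proof in structure and in every key ingredient.
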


\begin{proof}

For the macroscopic estimate, by Lemma \ref{lem: macro a priori} with $H=\Gamma(f,f)$, we already know that
\begin{align}
\Big\Vert \frac{\vert \nabla_x \vert}{\langle \nabla_x \rangle} (a,b,c) \Big\Vert_{L^1_k L^2_T} 
&\le C \Vert f_0\Vert_{L^1_k L^2_v} + C(\Vert f\Vert_{L^1_k L^\infty_T L^2_v} + \Vert (\mathbf{I}-\mathbf{P}) f \Vert_{L^1_k L^2_T L^2_{v,D}} )\notag \\
&+ C\Vert f \Vert_{L^1_k L^\infty_T L^2_v} \Vert f\Vert_{L^1_k L^2_T L^2_{v,D}}. \label{ineq: P low freq}
\end{align}
Also, recall that we have proved the non-weighted $L^1_k$ estimate \eqref{p1kcc}.
Following~\cite{Str12}, we shall deduce velocity-weighted estimates of the micro part under
\eqref{apaweak}, and combine them with \eqref{ineq: P low freq} and \eqref{p1kcc} to show the desired result.

Indeed, we first apply $(\mathbf{I}-\mathbf{P})$ to \eqref{eq: fourier} to have
\begin{align}
&\partial_t (\mathbf{I}-\mathbf{P}) \hat{f}+iv\cdot k (\mathbf{I}-\mathbf{P}) \hat{f} +L(\mathbf{I}-\mathbf{P}) \hat{f} \notag\\
&=\widehat{\Gamma(f,f)}-(\mathbf{I}-\mathbf{P})[iv\cdot k \mathbf{P}\hat{f}] +\mathbf{P}[iv\cdot k (\mathbf{I}-\mathbf{P}) \hat{f}]. 
\label{lem.l1k.pad1}
\end{align}
We multiply this with $w_{\ell,q}^2 (\mathbf{I}-\mathbf{P}) \bar{\hat{f}}$ to have
\begin{align*}
\frac{1}{2} \frac{\partial}{\partial t}\vert w_{\ell,q} (\mathbf{I}-\mathbf{P}) \hat{f}\vert^2 
+\mathrm{Re}\, ( w_{\ell,q}^2 L (\mathbf{I}-\mathbf{P})\hat{f},  (\mathbf{I}-\mathbf{P})\hat{f}) = \Gamma_1 +\Gamma_2, 
\end{align*}
where $\Gamma_1=\mathrm{Re}(\widehat{\Gamma(f,f)}, w_{\ell,q}^2(\mathbf{I}-\mathbf{P})\hat{f})$ and $\Gamma_2$ is a similar inner product term involving all the other terms of the right hand side of \eqref{lem.l1k.pad1}. We integrate the above identity over $\mathbb{R}^3_v \times [0,T]$ and then take the square root to have
\begin{multline}
\Vert w_{\ell,q} (\mathbf{I}-\mathbf{P}) \hat{f} \Vert_{L^\infty_T L^2_v} + \Vert w_{\ell,q} (\mathbf{I}-\mathbf{P}) \hat{f} \Vert_{L^2_T L^2_{v,D}} 
\le  C\Vert w_{\ell,q} (\mathbf{I}-\mathbf{P}) \hat{f}_0 \Vert_{L^2_v}\\
+ C\Vert (\mathbf{I}-\mathbf{P}) \hat{f} \Vert_{L^2_T L^2_v(B_R)} 
+ C\Big(\int^T_0 \Big\vert \int_{\mathbb{R}^3}  (\Gamma_1 + \Gamma_2)  dv\Big\vert dt\Big)^{1/2}, \label{ineq: weight I-P 1} 
\end{multline}
where Lemma \ref{wgesL} has been used.
Since
\begin{align*}
\Big\vert \int_{\mathbb{R}^3} \Gamma_2 dv\Big\vert \le \eta^2 \Vert w_{\ell,q} (\mathbf{I}-\mathbf{P}) \hat{f} \Vert_{L^2_{v,D}}^2 + C_\eta^2 \vert k\vert^2 \big( \Vert \langle v\rangle^{-m} (\mathbf{I}-\mathbf{P}) \hat{f} \Vert_{L^2_v}^2+\Vert \mathbf{P}\hat{f}\Vert_{L^2_v}^2\big)
\end{align*} 
for any $m\ge 0$, where $\eta>0$ can be small enough, we have 
\begin{align*}
\Big(\int^T_0 \Big\vert \int_{\mathbb{R}^3} \Gamma_2 dv\Big\vert dt\Big)^{1/2}
\le \eta \Vert w_{\ell,q} (\mathbf{I}-\mathbf{P}) \hat{f} \Vert_{L^2_T L^2_{v,D}} + C_\eta \vert k\vert \Vert \hat{f}\Vert_{L^2_T L^2_{v,D}}
\end{align*}
because $\Vert \langle v\rangle^{-m} (\mathbf{I}-\mathbf{P}) \hat{f} \Vert_{L^2_v}\le C \Vert (\mathbf{I}-\mathbf{P}) \hat{f} \Vert_{L^2_{v,D}}$ for sufficiently large $m$ and $\Vert  \mathbf{P}\hat{f}\Vert_{L^2_v}\le C \Vert \hat{f}\Vert_{L^2_{v,D}}$. 
Also, \eqref{bnpbl} implies
\begin{align*}
\Big( \int^T_0 \Big\vert \int_{\mathbb{R}^3_v} \Gamma_1  dv \Big\vert dt\Big)^{1/2}
&\le \eta \Vert w_{\ell,q} (\mathbf{I}-\mathbf{P})\hat{f} \Vert_{L^2_T L^2_{v,D}}\\
& +C_\eta \Vert w_{\ell,q} \hat{f} \Vert_{L^\infty_T L^2_v} *_k \Vert w_{\ell,q} \hat{f}\Vert_{L^2_T L^2_{v,D}}(k).
\end{align*}
Substituting these estimates into \eqref{ineq: weight I-P 1} and then integrating the resultant over $\vert k\vert\le 1$, it holds
\begin{align}
&\Vert w_{\ell,q} (\mathbf{I}-\mathbf{P}) f \Vert_{L^1_{\vert k\vert\le 1} L^\infty_T L^2_v} 
+ \Vert w_{\ell,q} (\mathbf{I}-\mathbf{P}) f \Vert_{L^1_{\vert k\vert\le 1} L^2_T L^2_{v,D}} \notag \\
&\le  C\Vert w_{\ell,q} (\mathbf{I}-\mathbf{P}) f_0 \Vert_{L^1_{\vert k\vert\le 1} L^2_v}
+ C\Vert (\mathbf{I}-\mathbf{P}) f \Vert_{L^1_{\vert k\vert\le 1} L^2_T L^2_v(B_R)} 
+ C \Vert \vert \nabla_x \vert f \Vert_{L^1_{\vert k\vert\le 1} L^2_T L^2_{v,D}} \notag \\
&+C \Vert w_{\ell,q} f \Vert_{L^1_k L^\infty_T L^2_v} \Vert w_{\ell,q} f\Vert_{ L^1_k L^2_T L^2_{v,D}}. \label{ineq: weight I-P low freq}
\end{align}
Notice we have used the inequality
\begin{align*}
\int_{\vert k\vert\le 1} \int_{\mathbb{R}^3_\ell} \vert \hat{F}(k-\ell)\vert \vert \hat{G}(\ell) \vert d\ell dk
\le \Vert F\Vert_{L^1_k} \Vert G\Vert_{L^1_k}
\end{align*}
for the nonlinear term.
Likewise, by a similar procedure without applying $(\mathbf{I}-\mathbf{P})$ first and then integrating over $\vert k\vert \ge 1$, one can show
\begin{align}
&\Vert w_{\ell,q} f \Vert_{L^1_{\vert k\vert \ge 1} L^\infty_T L^2_v} + \Vert w_{\ell,q} f \Vert_{L^1_{\vert k\vert \ge 1} L^2_T L^2_{v,D}} \notag \\
&\le C \Vert w_{\ell,q} f _0 \Vert_{L^1_k L^2_v} +C\Vert f \Vert_{L^1_{\vert k\vert \ge 1} L^2_T L^2_v(B_R)}\notag \\
&+ C \Vert w_{\ell,q} f \Vert_{L^1_k L^\infty_T L^2_v} \Vert w_{\ell,q} f\Vert_{ L^1_k L^2_T L^2_{v,D}}. \label{ineq: weight high freq}
\end{align}

Now, consider the linear combination \eqref{pinfk}$+\varepsilon_0$\eqref{ineq: P low freq}$+\varepsilon_1$\eqref{ineq: weight I-P low freq}$+\varepsilon_2$\eqref{ineq: weight high freq} for suitable constants $0< \varepsilon_2\ll \varepsilon_1\ll \varepsilon_0\ll 1$,
where the left-hand side of the resultant is equivalent to $\mathcal{E}_1(T)+ \mathcal{D}_1(T)$.
The linear terms on the right-hand side of the resultant are absorbed into the left-hand side as follows: First, in \eqref{ineq: weight I-P low freq}, we have
\begin{align*}
\Vert \vert \nabla_x \vert f\Vert_{L^1_{\vert k\vert\le 1} L^2_T L^2_{v,D}} 
\le C \Vert (\mathbf{I}-\mathbf{P})f\Vert_{L^1_k L^2_T L^2_{v,D}}
+
C\Big\Vert \frac{\vert \nabla_x \vert}{\langle \nabla_x \rangle} (a,b,c) \Big\Vert_{L^1_k L^2_T}
\le C \mathcal{D}_1(T)
\end{align*}
because 
\begin{align*}
\vert k\vert \sim \frac{\vert k\vert}{\sqrt{1+\vert k\vert^2}}\le 1 \quad
\text{for}\quad \vert k\vert \le 1.
\end{align*}
The equivalence is used for the macro part, and the inequality is used for the micro part.
Also in \eqref{ineq: weight I-P low freq}, it holds
\begin{align*}
\Vert (\mathbf{I}-\mathbf{P})f \Vert_{L^1_{\vert k\vert\le 1} L^2_T L^2_v(B_R)}
\le C \Vert (\mathbf{I}-\mathbf{P})f \Vert_{L^1_{\vert k\vert\le 1} L^2_T L^2_{v,D}}
\le C \mathcal{D}_1(T),
\end{align*}
owing to $\Vert \cdot \Vert_{L^2_v(B_R)}\le C_R \Vert \cdot \Vert_{L^2_{v,D}}$. By the same reason and $1\le \frac{2\vert k\vert^2}{1+\vert k\vert^2}$ for $\vert k\vert \ge 1$, in \eqref{ineq: weight high freq} it holds
\begin{align*}
\Vert f\Vert_{L^1_{\vert k \vert \ge 1} L^2_T L^2_v(B_R)}
\le C \Vert (\mathbf{I}-\mathbf{P})f\Vert_{L^1_k L^2_T L^2_{v,D}}
+ C \Big\Vert \frac{\vert \nabla_x \vert}{\langle \nabla_x \rangle} (a,b,c) \Big\Vert_{L^1_k L^2_T}
\le C\mathcal{D}_1(T).
\end{align*}
Other linear terms coming from \eqref{p1kcc} and \eqref{ineq: P low freq} can be handled as in the hard potential case, provided that $\varepsilon_0$, $\varepsilon_1$, and $\varepsilon_2$ are small.

Next, we focus on estimates of the nonlinear term 
\begin{align*}
N(f,f):=\Vert w_{\ell,q} f \Vert_{L^1_k L^\infty_T L^2_v} \Vert w_{\ell,q} f\Vert_{ L^1_k L^2_T L^2_{v,D}}
\end{align*}
under the assumption
\eqref{apaweak}. By the macro-micro decomposition, we divide it into four terms
\begin{align*}
N(f,f)&\le N(\mathbf{P}f, \mathbf{P}f)
+N((\mathbf{I}-\mathbf{P})f, \mathbf{P}f)\\
&+N(\mathbf{P}f, (\mathbf{I}-\mathbf{P})f)
+N((\mathbf{I}-\mathbf{P})f, (\mathbf{I}-\mathbf{P})f)\\
&=:N_1+N_2+N_3+N_4.
\end{align*}

For $N_1$ and $N_2$, we can proceed as in the proof of Lemma~\ref{lem: micro est hard} because
\begin{align*}
\Vert w_{\ell,q} \mathbf{P} f\Vert_{L^1_k L^2_T L^2_{v,D}}
\le C \Vert (a,b,c)\Vert_{L^1_k L^2_T}
\le C\delta
\end{align*}
under \eqref{apaweak}. With the fact $\Vert w_{\ell,q}  \mathbf{P} F \Vert_{L^2_v}\le C \Vert F\Vert_{L^2_v}$, this yields
\begin{align*}
N_1 =\Vert w_{\ell,q}  \mathbf{P} f \Vert_{L^1_k L^\infty_T L^2_v} \Vert w_{\ell,q}  \mathbf{P} f \Vert_{L^1_k L^2_T L^2_{v,D}}
\le C\delta  \Vert f \Vert_{L^1_k L^\infty_T L^2_v}
\le C \delta \mathcal{E}_1(T).
\end{align*}
Also, since
\begin{align*}
&\Vert w_{\ell,q}   (\mathbf{I}-\mathbf{P}) f \Vert_{L^1_{\vert k\vert \ge 1} L^\infty_T L^2_v} \notag \\
&\le \Vert (\mathbf{I}-\mathbf{P}) w_{\ell,q} f \Vert_{L^1_{\vert k\vert \ge 1} L^\infty_T L^2_v}
+\Vert [w_{\ell,q}, (\mathbf{I}-\mathbf{P})]  f \Vert_{L^1_{\vert k\vert \ge 1} L^\infty_T L^2_v} \notag \\
&=\Vert (\mathbf{I}-\mathbf{P}) w_{\ell,q} f \Vert_{L^1_{\vert k\vert \ge 1} L^\infty_T L^2_v}
+\Vert [w_{\ell,q}, \mathbf{P}]  f \Vert_{L^1_{\vert k\vert \ge 1} L^\infty_T L^2_v} \notag \\
&\le C\Vert w_{\ell,q} f \Vert_{L^1_{\vert k\vert \ge 1} L^\infty_T L^2_v}
+C \Vert f \Vert_{L^1_{\vert k\vert \ge 1} L^\infty_T L^2_v} \notag \\
&\le C \mathcal{E}_1(T),
\end{align*}
where $[\cdot,\cdot]$ is the commutator,
we have
\begin{align}
\Vert w_{\ell,q}   (\mathbf{I}-\mathbf{P}) f \Vert_{L^1_k L^\infty_T L^2_v} 
&=\Vert w_{\ell,q}   (\mathbf{I}-\mathbf{P}) f \Vert_{L^1_{\vert k\vert\le 1} L^\infty_T L^2_v} 
+\Vert w_{\ell,q}   (\mathbf{I}-\mathbf{P}) f \Vert_{L^1_{\vert k\vert\ge 1} L^\infty_T L^2_v} \notag \\
&\le C\mathcal{E}_1(T) \label{ineq:I-P energy bound}
\end{align} 
and thus
\begin{align*}
N_2 = \Vert w_{\ell,q}  (\mathbf{I}-\mathbf{P}) f \Vert_{L^1_k L^\infty_T L^2_v} \Vert w_{\ell,q}  \mathbf{P} f \Vert_{L^1_k L^2_T L^2_{v,D}}
\le C\delta \mathcal{E}_1(T).
\end{align*}

Similar to the estimate of $N_1$, we have
\begin{align*}
N_3 &= \Vert w_{\ell,q} \mathbf{P}   f \Vert_{L^1_k L^\infty_T L^2_v} \Vert w_{\ell,q}  (\mathbf{I}-\mathbf{P})f \Vert_{L^1_k L^2_T L^2_{v,D}}\\
&\le C \Vert f \Vert_{L^1_k L^\infty_T L^2_v} \Vert w_{\ell,q}  (\mathbf{I}-\mathbf{P})f \Vert_{L^1_k L^2_T L^2_{v,D}}
\le C \mathcal{E}_1(T) \mathcal{D}_1(T).
\end{align*}
Notice we can show $\Vert w_{\ell,q}  (\mathbf{I}-\mathbf{P})f \Vert_{L^1_k L^2_T L^2_{v,D}}\le C\mathcal{D}_1(T)$ as \eqref{ineq:I-P energy bound} is shown.

Finally, the two estimates for $w_{\ell,q} (\mathbf{I}-\mathbf{P}) f$ readily give
\begin{align*}
N_4=\Vert w_{\ell,q}  (\mathbf{I}-\mathbf{P})f \Vert_{L^1_k L^\infty_T L^2_v} \Vert w_{\ell,q}  (\mathbf{I}-\mathbf{P})f \Vert_{L^1_k L^2_T L^2_{v,D}}
\le C\mathcal{E}_1(T) \mathcal{D}_1(T).
\end{align*}

Likewise, for the non-weighted part, it holds
\begin{align*}
\Vert f \Vert_{L^1_kL^\infty_T L^2_v} \Vert f\Vert_{ L^1_k L^2_T L^2_{v,D}} 
&\le \Vert f \Vert_{L^1_k L^\infty_T L^2_v} \Vert (\mathbf{I}-\mathbf{P}) f\Vert_{ L^1_k L^2_T L^2_{v,D}}
 +C\delta \Vert f \Vert_{L^1_k L^\infty_T L^2_v}\\
&\le C \delta \mathcal{E}_1(T)+C \mathcal{E}_1(T) \mathcal{D}_1(T).
\end{align*}

Hence, if $\delta$ is small, the substitution of these inequalities into the aforementioned linear combination
gives
\begin{align*}
\mathcal{E}_1(T)+\mathcal{D}_1(T)\le C\mathcal{E}_1(0)+C\mathcal{E}_1(T)\mathcal{D}_1(T).
\end{align*}
Since $\mathcal{E}_1(0)\simeq\Vert w_{\ell,q}f_0\Vert_{L^1_k L^2_v}$ is sufficiently small,
this gives the desired estimate. 
\end{proof}

By a similar argument, we can also show a similar estimate in $L^p_k$. 

\begin{lemma}\label{lem: L^p soft}
Let $3/2<p\le \infty$.
Under the same assumption as in Lemma \ref{lem: L^1 soft}, we have 
\begin{align*}
\mathcal{E}_p(T) + \mathcal{D}_p(T) \le C \Vert w_{\ell,q} f_0 \Vert_{L^p_k L^2_v},
\end{align*}
where
\begin{align}
\mathcal{E}_p(T)&=\Vert f \Vert_{L^p_k L^\infty_T L^2_v} 
+ \Vert w_{\ell,q} (\mathbf{I}-\mathbf{P})f \Vert_{L^p_{\vert k\vert\le 1} L^\infty_T L^2_v} 
+ \Vert w_{\ell,q} f \Vert_{L^p_{\vert k\vert \ge 1} L^\infty_T L^2_v}, \label{eq:SoftL^pEnergy}\\
\mathcal{D}_p(T)&= \Vert (\mathbf{I}-\mathbf{P})f\Vert_{L^p_k L^2_T L^2_{v,D}} 
+ \Vert w_{\ell,q} (\mathbf{I}-\mathbf{P})f \Vert_{L^p_{\vert k\vert\le 1} L^2_T L^2_{v,D}} \notag\\
&+ \Vert w_{\ell,q} f \Vert_{L^p_{\vert k\vert \ge 1} L^2_T L^2_{v,D}}
+ \Big\Vert \frac{\vert \nabla_x \vert}{\langle \nabla_x \rangle} (a,b,c) \Big\Vert_{L^p_k L^2_T}.
\label{eq:SoftL^pDiss}
\end{align}
\end{lemma}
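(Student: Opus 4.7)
The plan is to mirror the proof of Lemma \ref{lem: L^1 soft} step by step, with the $L^p_k$ norm replacing the $L^1_k$ norm, and to exploit Young's convolution inequality
$$\| F*_k G\|_{L^p_k}\leq \|F\|_{L^p_k}\,\|G\|_{L^1_k}$$
so that every nonlinear contribution carries one $L^1_k$ factor that can be bounded via the already-established Lemma \ref{lem: L^1 soft}. Concretely, first I would apply Lemma \ref{lem: macro a priori} (the $L^p_k$ statement) with $H=\Gamma(f,f)$ and Lemma \ref{lem: inner product Gamma} to obtain
\begin{align*}
\Big\|\frac{|\nabla_x|}{\langle\nabla_x\rangle}(a,b,c)\Big\|_{L^p_kL^2_T}
&\le C\|f_0\|_{L^p_kL^2_v}+C\|f\|_{L^p_kL^\infty_TL^2_v}+C\|(\mathbf{I}-\mathbf{P})f\|_{L^p_kL^2_TL^2_{v,D}}\\
&\quad +C\|f\|_{L^p_kL^\infty_TL^2_v}\,\|f\|_{L^1_kL^2_TL^2_{v,D}},
\end{align*}
which plays the analogue of \eqref{ineq: P low freq}. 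Next I would establish the non-weighted $L^p_k$ bound by repeating the proof of \eqref{ineq: L^infty_k micro a priori}, noting that the only way the nonlinear term enters is via $\|f\|_{L^p_kL^\infty_TL^2_v}\|f\|_{L^1_kL^2_TL^2_{v,D}}$, which by \eqref{apaweak} plus $\mathcal{D}_1(T)\le C\|w_{\ell,q}f_0\|_{L^1_kL^2_v}$ is controlled by $C(\delta+\|w_{\ell,q}f_0\|_{L^1_kL^2_v})\cdot\mathcal{E}_p(T)$.

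For the weighted micro estimates, I would apply $(\mathbf{I}-\mathbf{P})$ to the Fourier-transformed equation \eqref{eq: fourier} as in \eqref{lem.l1k.pad1}, multiply by $w_{\ell,q}^2 (\mathbf{I}-\mathbf{P})\bar{\hat f}$, use Lemma \ref{wgesL} for coercivity and \eqref{bnpbl} for $\Gamma$, and integrate in $t$; then take the $L^p_{|k|\le 1}$ norm to get the analogue of \eqref{ineq: weight I-P low freq}. By Young's inequality,
\begin{align*}
\bigl\|\,\|w_{\ell,q}\hat f\|_{L^\infty_TL^2_v} *_k \|w_{\ell,q}\hat f\|_{L^2_TL^2_{v,D}}\bigr\|_{L^p_k}
\le \|w_{\ell,q}f\|_{L^p_kL^\infty_TL^2_v}\,\|w_{\ell,q}f\|_{L^1_kL^2_TL^2_{v,D}},
\end{align*}
so the right-hand side contains $\mathcal{E}_p(T)\cdot\mathcal{D}_1(T)$. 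The commutator term $[w_{\ell,q},(\mathbf{I}-\mathbf{P})]$ and the transport remainder $(\mathbf{I}-\mathbf{P})[iv\cdot k\,\mathbf{P}\hat f]+\mathbf{P}[iv\cdot k\,(\mathbf{I}-\mathbf{P})\hat f]$ are handled exactly as in Lemma \ref{lem: L^1 soft}, producing an $\eta$-term plus $C_\eta\,\||\nabla_x|f\|_{L^p_{|k|\le 1}L^2_TL^2_{v,D}}$, which in turn is controlled by $C\|(\mathbf{I}-\mathbf{P})f\|_{L^p_kL^2_TL^2_{v,D}}+C\||\nabla_x|/\langle\nabla_x\rangle\,(a,b,c)\|_{L^p_kL^2_T}\le C\mathcal{D}_p(T)$ using $|k|\lesssim |k|/\sqrt{1+|k|^2}$ on $|k|\le 1$. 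The high-frequency estimate $|k|\ge 1$ is obtained by directly multiplying the Fourier equation by $w_{\ell,q}^2\bar{\hat f}$ (without first projecting) and taking the $L^p_{|k|\ge 1}$ norm, giving the analogue of \eqref{ineq: weight high freq}; the local $L^2_v(B_R)$ remainder is absorbed via $\|\cdot\|_{L^2_v(B_R)}\le C\|\cdot\|_{L^2_{v,D}}$ combined with $1\le 2|k|^2/(1+|k|^2)$ on $|k|\ge 1$.

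Finally I would form the linear combination
$$(\text{non-weighted } L^p_k)+\varepsilon_0\,(\text{macro})+\varepsilon_1\,(\text{weighted low freq})+\varepsilon_2\,(\text{weighted high freq})$$
for $0<\varepsilon_2\ll\varepsilon_1\ll\varepsilon_0\ll 1$; the left-hand side is equivalent to $\mathcal{E}_p(T)+\mathcal{D}_p(T)$. All linear terms on the right are absorbed into $\mathcal{D}_p(T)$ exactly as in Lemma \ref{lem: L^1 soft}, while every nonlinear contribution is of the form $C(\delta+\|w_{\ell,q}f_0\|_{L^1_kL^2_v})(\mathcal{E}_p(T)+\mathcal{D}_p(T))$ (using the macro/micro splitting $N(f,f)=N_1+N_2+N_3+N_4$ and the smallness of $\|\mathbf{P}f\|_{L^1_kL^2_TL^2_{v,D}}\le C\delta$ from \eqref{apaweak} together with Lemma \ref{lem: L^1 soft}). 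Choosing $\delta$ and the initial data sufficiently small closes the estimate and yields
$$\mathcal{E}_p(T)+\mathcal{D}_p(T)\le C\|w_{\ell,q}f_0\|_{L^p_kL^2_v}.$$
The main obstacle, and the reason the $L^p_k$ estimate cannot stand alone, is ensuring that in every nonlinear bound the bilinear splitting by Young's inequality puts the dissipative factor in $L^1_k$ rather than $L^p_k$; this forces us to invoke Lemma \ref{lem: L^1 soft} as a prerequisite so that the $L^1_k$ dissipation norms of both $\mathbf{P}f$ (by \eqref{apaweak}) and $(\mathbf{I}-\mathbf{P})f$ (by the already-closed $L^1_k$ estimate) are small enough to be absorbed.
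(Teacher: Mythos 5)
Your proposal follows essentially the same route as the paper: repeat the $L^1_k$ argument of Lemma \ref{lem: L^1 soft} with $L^p_k$ in place of $L^1_k$, use Young's inequality $\Vert F\ast_k G\Vert_{L^p_k}\le \Vert F\Vert_{L^p_k}\Vert G\Vert_{L^1_k}$ so that each nonlinear term carries an $L^1_k$ dissipation factor, and then control that factor by splitting into macro and micro parts, bounding $\Vert w_{\ell,q}\mathbf{P}f\Vert_{L^1_kL^2_TL^2_{v,D}}\le C\delta$ via \eqref{apaweak} and $\Vert w_{\ell,q}(\mathbf{I}-\mathbf{P})f\Vert_{L^1_kL^2_TL^2_{v,D}}\le C\Vert w_{\ell,q}f_0\Vert_{L^1_kL^2_v}$ via Lemma \ref{lem: L^1 soft}, before absorbing with the same $\varepsilon_0,\varepsilon_1,\varepsilon_2$ linear combination. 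The argument is correct and matches the paper's proof in all essentials.
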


\begin{proof}
Recall $\Vert fg \Vert_{L^p_k}\le \Vert f\Vert_{L^p_k} \Vert g\Vert_{L^1_k}$. Repeating the same argument as in the proof of Lemma \ref{lem: L^1 soft} while using $L^p_k$ instead of $L^1_k$, then applying this inequality to nonlinear terms, we can show
\begin{align}
\mathcal{E}_p(T)+ \mathcal{D}_p(T)
&\le C \Vert w_{\ell,q}  f_0\Vert_{L^p_k L^2_v} +C \Vert w_{\ell,q} (\mathbf{I}-\mathbf{P}) f_0 \Vert_{L^p_{\vert k\vert \le 1} L^2_v} \notag \\
&+ C\varepsilon_0 \Vert f \Vert_{L^p_k L^\infty_T L^2_v} \Vert f\Vert_{L^1_k L^2_T L^2_{v,D}}\notag \\
&+ C (\varepsilon_1+\varepsilon_2) \Vert w_{\ell,q} f \Vert_{L^p_k L^\infty_T L^2_v} \Vert w_{\ell,q} f\Vert_{ L^1_k L^2_T L^2_{v,D}}. \notag
\end{align}
Notice all the linear terms can be absorbed into the left-hand side exactly in the same manner as the previous proof.

Here, the nonlinear terms on the right-hand side can be dealt with as follows. We first have
\begin{align*}
\Vert w_{\ell,q} f \Vert_{L^p_k L^\infty_T L^2_v} \Vert w_{\ell,q} f\Vert_{ L^1_k L^2_T L^2_{v,D}}
&\le \Vert w_{\ell,q} f \Vert_{L^p_k L^\infty_T L^2_v} \Vert w_{\ell,q} \mathbf{P} f\Vert_{ L^1_k L^2_T L^2_{v,D}} \\
&+ \Vert w_{\ell,q} f \Vert_{L^p_k L^\infty_T L^2_v} \Vert w_{\ell,q} (\mathbf{I}-\mathbf{P}) f\Vert_{ L^1_k L^2_T L^2_{v,D}}.
\end{align*}
We already know that $\Vert  w_{\ell,q} (\mathbf{I}-\mathbf{P}) f\Vert_{ L^1_k L^2_T L^2_{v,D}} \le C\mathcal{D}_1(T)\le C \Vert w_{\ell,q} f_0 \Vert_{L^1_k L^2_v}$ by the previous lemma, and $\Vert w_{\ell,q} \mathbf{P} f\Vert_{ L^1_k L^2_T L^2_{v,D}}\le C\delta$ for small $q>0$ by
\eqref{apaweak}. 
Therefore the sum of these terms is bounded by 
\begin{align*}
C(\delta+\Vert w_{\ell,q} f_0 \Vert_{L^1_k L^2_v}) \Vert w_{\ell,q} f \Vert_{L^p_k L^\infty_T L^2_v}
\end{align*}
if $\delta$ and the initial data are sufficiently small. Further, as in the $L^1_k$-case, we can show $\Vert w_{\ell,q} f \Vert_{L^p_k L^\infty_T L^2_v}\le C \mathcal{E}_p(T)$ by the macro-micro and frequency decompositions.
In the same manner,
\begin{align*}
\Vert f \Vert_{L^p_k L^\infty_T L^2_v} \Vert f\Vert_{L^1_k L^2_T L^2_{v,D}}
\le C(\delta + \Vert w_{\ell,q} f_0\Vert_{L^1_k L^2_v}) \mathcal{E}_p(T).
\end{align*} 
Therefore, if $\delta +\Vert w_{\ell,q} f_0\Vert_{L^1_k L^2_v}$ are sufficiently small, the nonlinear terms above can be absorbed into the left-hand side, which completes the proof.
\end{proof}

\subsection{Time-weighted estimates}

Based on Lemmas \ref{lem: L^1 soft} and \ref{lem: L^p soft}, we will be able to deduce the time-weighted estimates similar to the hard potential case. To overcome the degeneration of dissipation in large velocity for soft potentials, we have to carry out the interpolation in triple variables $t$-$k$-$v$. We use the splitting idea in \cite{SG-CPDE,SG-08-ARMA} to treat the $t$-$v$ variables while we use the idea in \cite{KNN} to deal with the frequency variable $k$.

\begin{lemma}\label{lem: time-weighted soft}
Assume \eqref{ineq:SoftIndexCondition}, \eqref{MainSoftInitial}, and \eqref{apaweak}, then it holds
\begin{align*}
\mathscr{E}(T)+\mathscr{D}(T)
\le C \Vert w_{\ell+j,q}f_0\Vert_{L^1_k L^2_v} 
+C \Vert w_{\ell+j,q}f_0\Vert_{L^p_k L^2_v},
\end{align*}
where
\begin{align*}
\mathscr{E}(T)
&=\Vert (1+t)^{\sigma/2} f\Vert_{L^1_k L^\infty_T L^2_v}
+\Vert (1+t)^{\sigma/2} w_{\ell,q} (\mathbf{I}-\mathbf{P}) f\Vert_{L^1_{\vert k\vert \le 1} L^\infty_T L^2_v}\\
&+\Vert (1+t)^{\sigma/2} w_{\ell,q} f\Vert_{L^1_{\vert k \vert \ge 1} L^\infty_T L^2_v},\\
\mathscr{D}(T)
&= \Vert (1+t)^{\sigma/2} (\mathbf{I}-\mathbf{P}) f \Vert_{L^1_k L^2_T L^2_{v,D}}
+\Vert (1+t)^{\sigma/2} w_{\ell,q} (\mathbf{I}-\mathbf{P})  f \Vert_{L^1_{\vert k\vert \le 1} L^2_T L^2_{v,D}}\\
&+\Vert (1+t)^{\sigma/2} w_{\ell,q}  f \Vert_{L^1_{\vert k\vert \ge 1} L^2_T L^2_{v,D}}
+\Big\Vert \frac{\vert \nabla_x \vert}{\langle \nabla_x \rangle} (1+t)^{\sigma/2} (a,b,c) \Big\Vert_{L^1_k L^2_T}.
\end{align*}
\end{lemma}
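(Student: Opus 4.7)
The plan is to extend the weighted $L^1_k$ and $L^p_k$ energy--dissipation estimates of Lemmas \ref{lem: L^1 soft} and \ref{lem: L^p soft} to their time-weighted versions by grafting the factor $(1+t)^\sigma$ onto every building-block identity, exactly as was done in Lemma \ref{lem: micro time-weighted} and Lemma \ref{lem.twemp} for the hard case. Concretely, I would multiply by $(1+t)^\sigma$ each of the three energy identities used to produce $\mathcal E_1+\mathcal D_1$ (the non-weighted Fourier identity for $\hat f$, the $w_{\ell,q}^2$-weighted identity for $(\mathbf I-\mathbf P)\hat f$ restricted to $|k|\le 1$, and the $w_{\ell,q}^2$-weighted identity for $\hat f$ restricted to $|k|\ge 1$), together with the macroscopic identity of Lemma \ref{lem.twemp}. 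After integration in $t$ and $v$, taking square roots, integration in $k$, and forming the same linear combination as in Lemma \ref{lem: L^1 soft} with small parameters $0<\varepsilon_2\ll\varepsilon_1\ll\varepsilon_0\ll 1$, the left-hand side becomes equivalent to $\mathscr E(T)+\mathscr D(T)$; the right-hand side contains the initial-data norm, nonlinear contributions, and the remainders generated by $\partial_t(1+t)^\sigma$.

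The non-weighted remainder $\sigma\int_0^T(1+t)^{\sigma-1}\|\hat f(t,k)\|_{L^2_v}^2\,dt$ is treated verbatim as in Lemma \ref{lem: micro time-weighted}: on $|k|\ge 1$ by $|k|^2/(1+|k|^2)\sim 1$ combined with the non-time-weighted bound of Lemma \ref{lem: L^1 soft}, and on $|k|\le 1$ by the time-frequency interpolation $\sigma-1=[\sigma,\sigma-\omega]_\theta$ from \eqref{eq:InterpolationHard} followed by H\"older in $k$ and the $L^p_k$ bound of Lemma \ref{lem: L^p soft}---this is where $p>3/2$ enters. The new piece is the weighted remainder $\sigma\int_0^T(1+t)^{\sigma-1}\|w_{\ell,q}\hat f\|_{L^2_v}^2\,dt$, for which the dissipation $\|w_{\ell,q}\hat f\|_{L^2_{v,D}}^2$ is strictly weaker in large velocities because $\gamma+2s<0$. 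The remedy is a H\"older-in-$v$ interpolation exploiting the extra weight index $j$: since $w_{\ell+j,q}^2=\langle v\rangle^{2j|\gamma+2s|}w_{\ell,q}^2$ and the dissipation carries a gain $\langle v\rangle^{-|\gamma+2s|}$, one has
\begin{equation*}
\|w_{\ell,q}g\|_{L^2_v}^{2}\le C\,\|w_{\ell,q}g\|_{L^2_{v,D}}^{2(1-\theta_v)}\,\|w_{\ell+j,q}g\|_{L^2_v}^{2\theta_v}
\end{equation*}
for $\theta_v=r/(1+r)$, valid whenever $j\ge 1/(2r)$. Combining this with the time-frequency interpolation of the hard case and Young's inequality, the weighted remainder splits into $\eta(1+t)^\sigma\|w_{\ell,q}\hat f\|_{L^2_{v,D}}^2$---absorbable into $\mathscr D(T)$---plus a residual of the form $C_\eta(1+t)^{\sigma-1/\theta_v}|k|^{-\mu}\|w_{\ell+j,q}\hat f\|_{L^2_v}^2$. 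The constraint $j>\sigma/(2r)$ in \eqref{ineq:SoftIndexCondition} forces the time exponent $\sigma-1/\theta_v$ to be strictly less than $-1$ (so the $t$-integral is finite), while $r<p'\varepsilon/(3+p')$ forces $\mu<3/p'$ (so the low-frequency singularity is $L^{p'}_k$-integrable); H\"older in $k$ and Lemma \ref{lem: L^p soft} then bound the whole term by $C\|w_{\ell+j,q}f_0\|_{L^p_kL^2_v}$.

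The nonlinear contribution is handled by the same fourfold macro--micro decomposition $N_1+N_2+N_3+N_4$ as in Lemma \ref{lem: L^1 soft}: each factor of the form $\|w_{\ell,q}\mathbf P f\|_{L^1_kL^2_TL^2_{v,D}}$ is $\le C\delta$ by \eqref{apaweak}, each factor of the form $\|w_{\ell,q}(\mathbf I-\mathbf P)f\|_{L^1_kL^2_TL^2_{v,D}}$ is $\le C\|w_{\ell,q}f_0\|_{L^1_kL^2_v}$ by Lemma \ref{lem: L^1 soft}, and the time-weighted factors collapse into $\mathscr E(T)$ or $\mathscr D(T)$ and are absorbed once the initial data and $\delta$ are small. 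The time-weighted macroscopic estimate is a direct transcription of Lemma \ref{lem.twemp}, with the terms analogous to \eqref{macro time remainder1}--\eqref{macro time remainder3} handled exactly as there and the boundary factor $\|(a,b,c)\|_{L^p_kL^\infty_T}$ controlled through Lemma \ref{lem: L^p soft}. The main obstacle is precisely the triple interpolation in $t$--$k$--$v$ for the weighted remainder: the three constraints---time-integrability of $(1+t)^{\sigma-1/\theta_v}$, $L^{p'}_k$-integrability of $|k|^{-\mu}$ on $\{|k|\le 1\}$, and velocity weight no larger than $w_{\ell+j,q}$---must hold simultaneously, and \eqref{ineq:SoftIndexCondition} is precisely the sharp compatibility condition achieving this; everything else is a direct adaptation of the hard-case machinery and of Lemmas \ref{lem: L^1 soft}--\ref{lem: L^p soft}.
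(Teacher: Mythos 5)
Your overall architecture matches the paper's: multiply the soft-potential energy identities by $(1+t)^{\sigma}$, form the same linear combination as in Lemma \ref{lem: L^1 soft}, and reduce everything to controlling the $(1+t)^{\sigma-1}$ remainders. Your H\"older/Young interpolation in $v$ (trading $\langle v\rangle^{\gamma+2s}$ against the extra weight index $j$) is a legitimate substitute for the paper's explicit splitting of velocity space into $E=\{\langle v\rangle^{|\gamma+2s|}\le (1+t)^r\}$ and $E^c$; the two devices produce the same dichotomy, namely an absorbable term $\eta(1+t)^{\sigma}\Vert w_{\ell,q}\hat f\Vert_{L^2_{v,D}}^2$ plus a time-integrable term carrying the heavier weight $w_{\ell+j,q}$, and your parameter bookkeeping ($j\ge 1/(2r)$, $\sigma r<1$ from $r<p'\varepsilon/(3+p')$) is consistent, though the time-integrability of $(1+t)^{\sigma-1-1/r}$ comes from the smallness of $r$, not from $j>\sigma/(2r)$ as you state.

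The genuine gap is your treatment of the \emph{non-weighted} remainder $\int_0^T(1+t)^{\sigma-1}\Vert\hat f\Vert_{L^2_v}^2\,dt$, which you claim can be handled ``verbatim as in Lemma \ref{lem: micro time-weighted}.'' That hard-potential argument passes from $\Vert(\mathbf I-\mathbf P)\hat f\Vert_{L^2_v}$ to $\Vert(\mathbf I-\mathbf P)\hat f\Vert_{L^2_{v,D}}$ using $\Vert\cdot\Vert_{L^2_v}\lesssim\Vert\cdot\Vert_{L^2_{\gamma/2+s}}\lesssim\Vert\cdot\Vert_{L^2_{v,D}}$, which is exactly what fails when $\gamma+2s<0$. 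This term (\eqref{time-remainder-soft3} in the paper) is in fact the most delicate one: the paper must split it over $E$ and $E^c$ as well, pay extra velocity weight on $E^c$ (using that $w_{j',0}\mathbf P$ is bounded for the macroscopic part over $|k|\le 1$, and a combined $t$--$k$ interpolation with exponent $r_2$), and it is precisely the high-frequency $E^c$ part of this term that forces the hypothesis $q=0\Rightarrow\ell>\sigma/(2r)$ in \eqref{ineq:SoftIndexCondition} — a condition your proof never invokes, which is a sign the step is missing. Your own velocity-interpolation tool could be adapted to repair this (applied with base weight $1$ and target weight $w_{j,q}$, separately on the micro and macro components and on low and high frequencies), but as written the proposal does not do so, and the ``verbatim'' step is false for soft potentials.
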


\begin{proof}
We multiply $(1+t)^{\sigma/2}$ with \eqref{eq: fourier} and carry out the same argument as in the proof of Lemma \ref{lem: L^1 soft}.
Then instead of \eqref{ineq: weight I-P low freq}, one can show
\begin{align}
&\Vert (1+t)^{\sigma/2} w_{\ell,q} (\mathbf{I}-\mathbf{P}) f \Vert_{L^1_{\vert k\vert\le 1} L^\infty_T L^2_v} 
+ \Vert (1+t)^{\sigma/2} w_{\ell,q} (\mathbf{I}-\mathbf{P}) f \Vert_{L^1_{\vert k\vert\le 1} L^2_T L^2_{v,D}} \notag \\
&\le  C\Vert w_{\ell,q} (\mathbf{I}-\mathbf{P}) f_0 \Vert_{L^1_{\vert k\vert\le 1} L^2_v}
+ C \Vert (1+t)^{\sigma/2} (\mathbf{I}-\mathbf{P}) f \Vert_{L^1_{\vert k\vert\le 1} L^2_T L^2_v(B_R)} \notag\\
&+ C \Vert (1+t)^{\sigma/2} \vert \nabla_x \vert f \Vert_{L^1_{\vert k\vert\le 1} L^2_T L^2_{v,D}}\notag \\
&+C \Vert (1+t)^{\sigma/2} w_{\ell,q} f \Vert_{L^1_kL^\infty_T L^2_v} \Vert w_{\ell,q} f\Vert_{ L^1_k L^2_T L^2_{v,D}} \notag\\
&+  C\sqrt{\sigma} \int_{\vert k\vert\le 1} \Big(\int^T_0 (1+t)^{\sigma-1}\vert w_{\ell,q} (\mathbf{I}-\mathbf{P}) \hat{f} \vert_{L^2_v}^2 dt \Big)^{1/2} dk. 
\label{ineq: time weight I-P low freq}
\end{align}
Also, instead of \eqref{ineq: weight high freq} one can show
\begin{align}
&\Vert (1+t)^{\sigma/2} w_{\ell,q} f \Vert_{L^1_{\vert k\vert \ge 1} L^\infty_T L^2_v} + \Vert (1+t)^{\sigma/2} w_{\ell,q} f \Vert_{L^1_{\vert k\vert \ge 1} L^2_T L^2_{v,D}} \notag \\
&\le C \Vert w_{\ell,q} f _0 \Vert_{L^1_k L^2_v} +C\Vert (1+t)^{\sigma/2} f \Vert_{L^1_{\vert k\vert \ge 1} L^2_T L^2_v(B_R)}\notag\\
&+ C \Vert (1+t)^{\sigma/2} w_{\ell,q} f \Vert_{L^1_k L^\infty_T L^2_v} \Vert w_{\ell,q} f\Vert_{ L^1_k L^2_T L^2_{v,D}}\notag\\
&+  C \sqrt{\sigma} \int_{\vert k\vert \ge 1} \Big(\int^T_0 (1+t)^{\sigma-1} \Vert w_{\ell,q} \hat{f} \Vert_{L^2_v}^2 dt \Big)^{1/2} dk.
\label{ineq: time weight high freq}
\end{align}
As in the hard potential case, it can be verified that
\begin{align}
&\Big\Vert \frac{\vert \nabla_x \vert}{\langle \nabla_x \rangle} (1+t)^{\sigma/2} (a,b,c) \Big\Vert_{L^1_k L^2_T} 
\le C \Vert f_0 \Vert_{L^1_k L^2_v} 
+C \Vert (1+t)^{\sigma/2} f\Vert_{L^1_k L^\infty_T L^2_v}\notag \\
&+C\Vert (1+t)^{\sigma/2}(\mathbf{I}-\mathbf{P})f \Vert_{L^1_k L^2_T L^2_{v,D}} 
+C\Vert (a,b,c)\Vert_{L^p_k L^\infty_T}\notag \\
&+C \Vert (1+t)^{\sigma/2} f \Vert_{L^1_k L^\infty_T L^2_v} \Vert f\Vert_{L^1_k L^2_T L^2_{v,D}}
\label{ineq: time weight abc}
\end{align}
and
\begin{align}
&\Vert (1+t)^{\sigma/2} f \Vert_{L^1_k L^\infty_T L^2_v} + \Vert (1+t)^{\sigma/2} (\mathbf{I}-\mathbf{P})f \Vert_{L^1_k L^2_T L^2_{v,D}}
\le C \Vert f_0\Vert_{L^1_k L^2_v}\notag \\
&+C \Vert (1+t)^{\sigma/2} f \Vert_{L^1_k L^\infty_T L^2_v} \Vert f\Vert_{L^1_k L^2_T L^2_{v,D}}
+C\sqrt{\sigma}\int_{\mathbb{R}^3} \Big(\int^T_0 (1+t)^{\sigma-1} \Vert \hat{f} \Vert_{L^2_v}^2 dt \Big)^{1/2} dk.
\label{ineq: time weight no velocity}
\end{align}
Therefore, similar to the proofs of Lemmas \ref{lem: micro time-weighted} and \ref{lem.twemp}, once we can show appropriate estimates of the following three terms 
\begin{gather}
\int_{\vert k\vert\le 1} \Big(\int^T_0 (1+t)^{\sigma-1}\Vert w_{\ell,q} (\mathbf{I}-\mathbf{P}) \hat{f} \Vert_{L^2_v}^2 dt \Big)^{1/2} dk, \label{time-remainder-soft1}\\
\int_{\vert k\vert \ge 1} \Big(\int^T_0 (1+t)^{\sigma-1} \Vert w_{\ell,q} \hat{f} \Vert_{L^2_v}^2 dt \Big)^{1/2} dk, \label{time-remainder-soft2}\\
\int_{\mathbb{R}^3} \Big(\int^T_0 (1+t)^{\sigma-1} \Vert \hat{f}\Vert_{L^2_v}^2 dt \Big)^{1/2} dk \label{time-remainder-soft3},
\end{gather}
we can close the a priori estimate by an appropriate linear combination of \eqref{ineq: time weight I-P low freq}, \eqref{ineq: time weight high freq}, \eqref{ineq: time weight abc} and \eqref{ineq: time weight no velocity}.

As in \cite{SG-CPDE,SG-08-ARMA}, we decompose the full velocity space $\mathbb{R}^3_v$ into  $E+E^c$ with
\begin{align}\label{EandEC}
E=\{ \langle v\rangle^{\vert \gamma+2s\vert} \le (1+t)^{r}\},\quad
{0<r<\frac{p'\varepsilon}{3+p'}\in \Big[\frac{\varepsilon}{4}, \frac{\varepsilon}{2}\Big)}
\end{align}
for $3/2<p\le \infty$, where $p'$ is the H\"older conjugate of $p$.
We carry on estimates on each domain.

\medskip 
\noindent\textbf{Step 1:
Estimates on $E$}. 
It holds $1\le (1+t)^{r}\langle v\rangle^{\gamma+2s}$ on $E$, so
the $E$-part of \eqref{time-remainder-soft1} is bounded by
\begin{align*}
\int_{\vert k\vert\le 1} \Big(\int^T_0 \int_E (1+t)^{\sigma-1+r} \langle v\rangle^{\gamma+2s} \vert w_{\ell,q} (\mathbf{I}-\mathbf{P}) \hat{f} \vert^2 dv dt \Big)^{1/2} dk\\
\le \int_{\vert k\vert\le 1} \Big(\int^T_0 (1+t)^{\sigma-1+r} \Vert w_{\ell,q} (\mathbf{I}-\mathbf{P}) \hat{f} \Vert_{L^2_{v,D}}^2 dt \Big)^{1/2} dk
\end{align*}
since $\Vert \cdot \Vert_{L^2_{\gamma/2+s}}\le C\Vert \cdot \Vert_{L^2_{v,D}}$.
We use the interpolation with $\theta=\frac{1-r}{\sigma+\varepsilon+1-r}\in(0,1)$,
\begin{align}\label{ineq:InterpolationE}
(1+t)^{\sigma-1+r} 
= (1+t)^{\sigma(1-\theta)+\theta(-1-\varepsilon+r)}
\le \eta^2 (1+t)^\sigma + C_\eta^2 (1+t)^{-1-\varepsilon+r},
\end{align}
where $\eta >0$ is an arbitrarily small constant again.
By substitution we have
\begin{align*}
&\int_{\vert k\vert\le 1} \Big(\int^T_0 \int_E (1+t)^{\sigma-1+r} \langle v\rangle^{\gamma+2s} \vert w_{\ell,q} (\mathbf{I}-\mathbf{P}) \hat{f} \vert^2 dv dt \Big)^{1/2} dk\\
&\le \eta \int_{\vert k\vert\le 1} \Big(\int^T_0 (1+t)^{\sigma} \Vert  w_{\ell,q} (\mathbf{I}-\mathbf{P}) \hat{f} \Vert^2_{L^2_{v,D}} dv dt \Big)^{1/2} dk\\
&+ C_\eta \int_{\vert k\vert\le 1} \Big(\int^T_0 (1+t)^{-1-\varepsilon+r} \Vert  w_{\ell,q} (\mathbf{I}-\mathbf{P}) \hat{f} \Vert^2_{L^2_{v,D}} dv dt \Big)^{1/2} dk.
\end{align*}
The first term $\eta \Vert (1+t)^{\sigma/2} w_{\ell,q} (\mathbf{I}-\mathbf{P}) f \Vert_{L^1_{\vert k \vert\le 1} L^2_T L^2_{v,D}}$ is absorbed into the left hand side. Since 
$-1-\varepsilon+r<-1$, the second one is bounded by
\begin{align*}
&\Vert w_{\ell,q} (\mathbf{I}-\mathbf{P}) f \Vert_{L^p_{\vert k \vert\le 1} L^\infty_T L^2_{v,D}} \Big(\int_{\vert k\vert\le 1} dk\Big)^{1/p'} \Big( \int^T_0 (1+t)^{-1-\varepsilon+r}dt\Big)^{1/2} \\
&\le C \Vert w_{\ell,q}  f_0 \Vert_{L^p_k  L^2_v} \le C \Vert w_{\ell+j,q}  f_0 \Vert_{L^p_k  L^2_v}
\end{align*}
by the H\"older inequality and Lemma \ref{lem: L^p soft}.
Recall \eqref{Weight Def Re} for $w_{\ell,q} \le w_{\ell+j,q}$.

In the same manner, for \eqref{time-remainder-soft2}, it holds
\begin{align*}
&\int_{\vert k\vert \ge 1} \Big(\int^T_0 \int_E (1+t)^{\sigma-1} \vert w_{\ell,q} \hat{f} \vert^2 dv dt \Big)^{1/2} dk \\
&\le \int_{\vert k\vert \ge 1} \Big(\int^T_0 (1+t)^{\sigma-1+r} \Vert w_{\ell,q} \hat{f} \Vert_{L^2_{v,D}}^2 dt \Big)^{1/2} dk\\
&\le \int_{\vert k\vert \ge 1} \Big(\int^T_0 \big( \eta^2(1+t)^{\sigma}+C_\eta^2)
\Vert w_{\ell,q} \hat{f} \Vert_{L^2_{v,D}}^2 dt \Big)^{1/2} dk\\
&\le \eta\Vert (1+t)^{\sigma/2} w_{\ell,q} f \Vert_{L^1_{\vert k \vert \ge 1} L^2_T L^2_{v,D}} + C_\eta  \Vert w_{\ell,q} f_0 \Vert_{L^1_k L^2_v}.
\end{align*}

For \eqref{time-remainder-soft3}, we use the different interpolation
\begin{align*}
(1+t)^{\sigma-1+r}
&=(1+t)^{(1-\theta)\sigma} \vert k\vert^{2(1-\theta)} \cdot (1+t)^{\theta(\sigma-\omega)} \vert k\vert^{-2(1-\theta)} \\
&\le \eta^2 (1+t)^\sigma \vert k\vert^2 
+ C_\eta^2 (1+t)^{\sigma-\omega} \vert k\vert^{-2\frac{1-\theta}{\theta}}
\end{align*}
for $\vert k\vert \le 1$, where
\begin{align*}
\omega=4-\frac{3}{p}-\varepsilon,\quad
\theta=\frac{1-r}{\omega}.
\end{align*}
Notice $\sigma-\omega=-1-\varepsilon<-1$ and $p'(1-\theta)/\theta<3$. By calculation it can be checked that the latter is equivalent to $r<p'\varepsilon/(3+p')$ in \eqref{EandEC}. Hence
\begin{align*}
\int^\infty_0 (1+t)^{\sigma-\omega} dt
<\infty,\quad
\int_{\vert k\vert\le 1}\vert k \vert^{-p'\frac{1-\theta}{\theta}} dk
<\infty
\end{align*}
by the choice of $r$. This further leads us to
\begin{align*}
&\int_{\vert k\vert \le 1} \Big( (1+t)^{\sigma-\omega} \vert k\vert^{-2\frac{1-\theta}{\theta}} \Vert \hat{f}\Vert^2_{L^2_{v,D}} dt \Big)^{1/2} dk\\
&\le \Big(\int^\infty_0 (1+t)^{\sigma-\omega} dt\Big)^{1/2} \Big(\int_{\vert k\vert\le 1}\vert k \vert^{-p'\frac{1-\theta}{\theta}} dk \Big)^{1/p'} \Vert f\Vert_{L^p_k L^\infty_T L^2_{v,D}}\\
&\le C\Vert f\Vert_{L^p_k L^\infty_T L^2_{v,D}}.
\end{align*}
Using \eqref{ineq:InterpolationE} for $\vert k\vert \ge 1$, we have
\begin{align*}
&\int_{\mathbb{R}^3} \Big(\int^T_0 (1+t)^{\sigma-1} \Vert \hat{f} \Vert_{L^2_v}^2 dt \Big)^{1/2} dk\\
&\le \Big(\int_{\vert k\vert \le 1}+\int_{\vert k\vert \ge 1}\Big) \Big(\int^T_0 (1+t)^{\sigma-1+r} \Vert \hat{f} \Vert_{L^2_{v,D}}^2 dt \Big)^{1/2} dk\\
&\le \int_{\vert k\vert \le 1} \Big(\int^T_0 (\eta^2(1+t)^\sigma \vert k\vert^2+C_\eta^2(1+t)^{\sigma-\omega}\vert k\vert^{-2\frac{1-\theta}{\theta}}) \Vert \hat{f} \Vert_{L^2_{v,D}}^2 dt \Big)^{1/2} dk\\
&+\int_{\vert k\vert \ge 1}\Big(\int^T_0 (\eta^2(1+t)^\sigma +C_\eta^2) \Vert \hat{f} \Vert_{L^2_{v,D}}^2 dt \Big)^{1/2} dk\\
&\le \eta \Vert \vert\nabla_x\vert (1+t)^{\sigma/2} f \Vert_{L^1_{\vert k\vert \le 1} L^2_T L^2_{v,D}} + C_\eta \Vert f \Vert_{L^p_k L^\infty_T L^2_{v,D}} \\
& + \eta \Vert (1+t)^{\sigma/2} f \Vert_{L^1_{\vert k\vert \ge 1} L^2_T L^2_{v,D}} + C_\eta \Vert f\Vert_{L^1_{\vert k\vert \ge 1} L^2_T L^2_{v,D}}.
\end{align*}
The terms multiplied with $\eta$ are absorbed into (recall the argument subsequent to \eqref{ineq: weight high freq}), and
by Lemmas \ref{lem: L^1 soft} and \ref{lem: L^p soft} the others are bounded as follows:
\begin{align*}
&\Vert f\Vert_{L^p_k L^\infty_T L^2_{v,D}}\le \Vert f\Vert_{L^p_k L^\infty_T L^2_v}
\le C\Vert w_{\ell,q} f_0\Vert_{L^p_k L^2_v},\\
&\Vert f\Vert_{L^1_{\vert k\vert \ge 1} L^2_T L^2_{v,D}} \le \Vert w_{\ell,q} f\Vert_{L^1_{\vert k\vert \ge 1} L^2_T L^2_{v,D}}\le C\Vert w_{\ell,q} f_0 \Vert_{L^1_k L^2_v}.
\end{align*}
Hence \eqref{time-remainder-soft3} is handled.

\medskip 
\noindent\textbf{Step 2:
Estimates on $E^c$}.
First, we consider  \eqref{time-remainder-soft1}
\begin{align*}
\int_{\vert k\vert \le 1} \Big( \int^T_0 \int_{E^c} (1+t)^{\sigma-1} \vert w_{\ell,q} (\mathbf{I}-\mathbf{P}) \hat{f}\vert^2 dv dt \Big)^{1/2} dk.
\end{align*}
For this term, we use the interpolation
\begin{align*}
(1+t)^{\sigma -1} 
&= (1+t)^{(1-\theta)\sigma}\langle v\rangle^{(1-\theta)(\gamma+2s)}\cdot (1+t)^{\theta(\sigma-\frac{r_1}{r_1-1})}\langle v\rangle^{\theta\frac{\vert \gamma+2s\vert}{r_1-1}}\\
& \le \eta^2 (1+t)^{\sigma} \langle v\rangle^{\gamma+2s} +  C_\eta^2 (1+t)^{\sigma-\frac{r_1}{r_1-1}} \langle v\rangle^{\frac{\vert \gamma+2s\vert}{r_1-1}}
\end{align*}
for some $1<r_1<\infty$ chosen later.
Here $\theta=1-1/r_1\in (0,1)$.
We set $2\alpha=1/(r_1-1)$ for brevity.
The first term is controlled by
\begin{align*}
&\int_{\vert k\vert \le 1} \Big( \int^T_0 \int_{\mathbb{R}^3} (1+t)^{\sigma} \langle v\rangle^{\gamma+2s} \vert w_{\ell,q} (\mathbf{I}-\mathbf{P}) \hat{f}\vert^2 dv dt \Big)^{1/2} dk\\
&\le \int_{\vert k\vert \le 1} \Big( \int^T_0 (1+t)^{\sigma} \Vert w_{\ell,q} (\mathbf{I}-\mathbf{P}) \hat{f}\Vert^2_{L^2_{v,D}} dt \Big)^{1/2} dk
=\Vert w_{\ell,q} (\mathbf{I}-\mathbf{P}) f\Vert_{L^1_{\vert k\vert\le 1} L^2_T L^2_{v,D}},
\end{align*}
and the second one is bounded by
\begin{align}
&\int_{\vert k\vert \le 1} \Big( \int^T_0 \int_{E^c} (1+t)^{\sigma-\frac{r_1}{r_1-1}} \langle v\rangle^{2\vert \gamma+2s\vert\alpha} \vert w_{\ell,q} (\mathbf{I}-\mathbf{P}) \hat{f}\vert^2 dv dt \Big)^{1/2} dk \notag \\
&\le \int_{\vert k\vert \le 1} \Big( \int^T_0 (1+t)^{\sigma-\frac{r_1}{r_1-1}} \Vert w_{\ell+\alpha,q} (\mathbf{I}-\mathbf{P}) \hat{f}\Vert_{L^2_v}^2  dt \Big)^{1/2} dk \notag\\
&\le C \Vert w_{\ell+\alpha,q} (\mathbf{I}-\mathbf{P}) f\Vert_{L^p_{\vert k\vert\le 1} L^\infty_T L^2_v}\Big(\int^\infty_0 (1+t)^{\sigma-\frac{r_1}{r_1-1}} dt \Big)^{1/2}  \notag \\
&\le C \Vert w_{\ell+\alpha,q} f_0\Vert_{L^p_k L^2_v} \label{ineq: j_1}
\end{align}
by Lemma \ref{lem: L^p soft},
provided
\begin{align*}
\sigma-\frac{r_1}{r_1-1}<-1
\Leftrightarrow \sigma< \frac{1}{r_1-1}=2\alpha.
\end{align*}
Therefore the infimum of $\alpha$ in \eqref{ineq: j_1} is $\sigma/2$. We take $r_1=1+(\sigma+\varepsilon')^{-1}$, $\varepsilon'>0$ so that $\alpha$ is close to $\sigma/2$ as we wish.
Since $j>\sigma/(2r)>\sigma/2$ under \eqref{ineq:SoftIndexCondition}, the bound is replaced with $C\Vert w_{\ell+j,q} f_0\Vert_{L^p_k L^2_v}$ if $\varepsilon'>0$ is small enough.

Next, using $1\le (1+t)^{-r} \langle v\rangle^{\vert \gamma+2s\vert}$ on $E^c$, for \eqref{time-remainder-soft2} we have
\begin{align}
&\int_{\vert k\vert\ge 1} \Big(\int^T_0 \int_{E^c} (1+t)^{\sigma-1} \vert w_{\ell,q} \hat{f}\vert^2 dvdt\Big)^{1/2} dk \notag \\
&\le \int_{\vert k\vert\ge 1} \Big(\int^T_0 \int_{E^c} (1+t)^{\sigma-1-2r j} \vert w_{\ell+j,q} \hat{f}\vert^2 dvdt\Big)^{1/2} dk \notag \\
& \le C \Vert w_{\ell+j,q}  f\Vert_{L^1_{\vert k\vert \ge 1} L^\infty_T L^2_v} \Big(\int^\infty_0 (1+t)^{\sigma-1-2r j} dt\Big)^{1/2} \notag \\
& \le C \Vert w_{\ell+j,q}  f_0\Vert_{L^1_k L^2_v} \notag
\end{align}
by Lemma \ref{lem: L^1 soft} because of 
$\sigma-1-2rj<-1$ under the assumption \eqref{ineq:SoftIndexCondition}.

We divide \eqref{time-remainder-soft3} into the two parts
\begin{align*}
\Big(\int_{\vert k\vert \ge 1} + \int_{\vert k\vert \le 1}\Big)
\Big(\int^T_0 \int_{E^c}(1+t)^{\sigma-1} \vert \hat{f}\vert^2 dv dt\Big)^{1/2} dk.
\end{align*}
For the part $\vert k\vert \ge 1$, we apply the same technique used in \cite{DLSS}. That is, on $E^c$ we have
\begin{align*}
w^{-2}_{\ell,q} \le (1+t)^{-2r\ell} e^{-\frac{q}{2}(1+t)^{\frac{r}{\vert \gamma+2s\vert}}}. 
\end{align*}
Plugging this into the integrand gives
\begin{align*}
&\int_{\vert k\vert\ge 1} \Big(\int^T_0 \int_{E^c}(1+t)^{\sigma-1} \vert \hat{f}\vert^2 dv dt\Big)^{1/2} dk\\
&\le \int_{\vert k\vert\ge 1} \Big(\int^T_0 (1+t)^{\sigma-1-2r\ell} e^{-\frac{q}{2}(1+t)^{\frac{r}{\vert \gamma+2s\vert}}} \Vert w_{\ell,q} \hat{f}\Vert_{L^2_v}^2 dt\Big)^{1/2} dk\\
&\le C \Vert w_{\ell,q} f\Vert_{L^1_{\vert k\vert\ge1} L^\infty_T L^2_v}  \Big(\int^\infty_0 (1+t)^{\sigma-1-2r\ell} e^{-\frac{q}{2}(1+t)^{\frac{r}{\vert \gamma+2s\vert}}} dt\Big)^{1/2}\\
&\le C\Vert w_{\ell,q} f_0\Vert_{L^1_k L^2_v}
\end{align*}
for any $r>0$, $q>0$, and $\ell\ge 0$. 
If $q=0$ (polynomial weight case), recall \eqref{ineq:SoftIndexCondition} so that $\sigma-1-2r\ell<-1$.

Finally we consider the part $\vert k\vert \le 1$ of \eqref{time-remainder-soft3}. It holds that
\begin{align}
&\int_{\vert k\vert \le 1} \Big( \int^T_0 \int_{E^c} (1+t)^{\sigma-1} \vert \hat{f}\vert^2 dvdt\Big)^{1/2} dk \notag\\
&\le \sqrt{2}\int_{\vert k\vert\le 1} \Big( \int^T_0 \int_{E^c} (1+t)^{\sigma-1} \vert (\mathbf{I}-\mathbf{P}) \hat{f}\vert^2 dvdt\Big)^{1/2} dk
\label{finalterm1}\\
&+\sqrt{2}\int_{\vert k\vert\le 1} \Big( \int^T_0 \int_{E^c} (1+t)^{\sigma-1} \vert \mathbf{P} \hat{f}\vert^2 dvdt\Big)^{1/2} dk
\label{finalterm2}.
\end{align}
Since $1\le w_{\ell,q}$, the estimate of \eqref{finalterm1} can be carried out as \eqref{time-remainder-soft1} was done. For \eqref{finalterm2} we use the interpolation
\begin{align*}
(1+t)^{\sigma-1} \le \eta^2 (1+t)^\sigma \vert k\vert^2 + C_\eta^2 (1+t)^{\sigma-\frac{r_2}{r_2-1}} \vert k\vert^{-\frac{2}{r_2-1}},\quad
r_2>{\frac{4p-3}{3p-3}}(>1).
\end{align*}
Notice the condition on $r_2$ is equivalent to $p'/(r_2-1)<3$.
Recalling $\vert k\vert^2\le \frac{2\vert k\vert^2}{1+\vert k\vert^2}$, we get 
\begin{align*}
\int_{\vert k\vert\le 1} \Big( \int^T_0 \int_{E^c} (1+t)^\sigma \vert k\vert^2 \vert \mathbf{P}\hat{f}\vert^2 dvdt\Big)^{1/2} dk\le C \Big\Vert \frac{\vert \nabla_x \vert}{\langle \nabla x\rangle} (1+t)^{\sigma/2} (a,b,c) \Big\Vert_{L^1_{\vert k\vert \le 1} L^2_T},
\end{align*}
which is absorbed. Also, by Lemma \ref{lem: L^p soft}, for sufficiently large $j'>0$ it holds
\begin{align*}
& \int_{\vert k\vert \le 1} \Big(\int^T_0 \int_{E^c} (1+t)^{\sigma-\frac{r_2}{r_2-1}} \vert k\vert^{-\frac{2}{r_2-1}} \vert \mathbf{P}\hat{f}\vert^2 dvdt \Big)^{1/2} dk\\
&\le \int_{\vert k\vert \le 1} \Big(\int^T_0 \int_{E^c} (1+t)^{\sigma-\frac{r_2}{r_2-1}-2r j'} \vert k\vert^{-\frac{2}{r_2-1}} \vert w_{j',0} \mathbf{P}\hat{f}\vert^2 dvdt \Big)^{1/2} dk\\
&\le  \Big(\int_{\vert k\vert\le 1} \vert k\vert^{-\frac{p'}{r_2-1}} dk \Big)^{1/p'} \Big(\int^T_0 (1+t)^{\sigma-\frac{r_2}{r_2-1}-2r j'} dt \Big)^{1/2} \Vert w_{j',0} \mathbf{P}f\Vert_{L^p_{\vert k \vert\le 1} L^\infty_T L^2_v}\\
&\le C \Vert f \Vert_{L^p_{\vert k \vert\le 1} L^\infty_T L^2_v}\\
&\le C \Vert w_{\ell+j,q} f_0 \Vert_{L^p_k L^2_v}.
\end{align*}
In the penultimate inequality, we have used the fact that for any $j'>0$, $w_{j',0}\mathbf{P}$ is a bounded operator over $L^2_v$.

\medskip 
\noindent\textbf{Step 3}: We substitute all the estimates of \eqref{time-remainder-soft1}, \eqref{time-remainder-soft2}, \eqref{time-remainder-soft3} obtained in Steps 1 and 2 into \eqref{ineq: time weight I-P low freq}-\eqref{ineq: time weight no velocity}.
Then the rest is quite similar to the proof of Lemma \ref{lem: L^1 soft}.
Indeed, we take an appropriate linear combination of the resultant inequalities, then estimate the nonlinear terms by the macro-micro decomposition. 
Using Lemma \ref{lem: L^1 soft}, if $\Vert w_{\ell, q} f_0 \Vert_{L^1_k L^2_v} +\delta$ is sufficiently small, the nonlinear terms are absorbed into the left hand side. 
This completes the proof.
\end{proof}

\begin{proof}[Proof of Theorem \ref{thm: MainSoft}]
Now, thanks to Lemma \ref{lem: time-weighted soft}, \eqref{apaweak} is verified for the soft potential case provided that
$\Vert w_{\ell+j,q} f_0 \Vert_{L^1_k L^2_v}
+  \Vert w_{\ell+j,q} f_0\Vert_{L^p_k L^2_v}$
is sufficiently small. Furthermore, \eqref{thm.sc.c1} follows from Lemma \ref{lem: time-weighted soft} by noting 
\begin{align}
\mathscr{E}(T)+\mathscr{D}(T) &\simeq  \Vert (1+t)^{\sigma/2} w_{\ell,q} f\Vert_{L^1_k L^\infty_T L^2_v}
+\Vert (1+t)^{\sigma/2} w_{\ell,q} (\mathbf{I}-\mathbf{P})  f \Vert_{L^1_k L^2_T L^2_{v,D}}\notag\\
&\quad+\Big\Vert \frac{\vert \nabla_x \vert}{\langle \nabla_x \rangle} (1+t)^{\sigma/2} (a,b,c) \Big\Vert_{L^1_k L^2_T}.\notag
\end{align}
Similarly, \eqref{thm.sc.c2} follows from Lemma \ref{lem: L^p soft} by noting
\begin{align}
  \mathcal{E}_p(T) + \mathcal{D}_p(T) &\simeq  \Vert w_{\ell,q} f\Vert_{L^p_k L^\infty_T L^2_v}
+\Vert  w_{\ell,q} (\mathbf{I}-\mathbf{P})  f \Vert_{L^p_k L^2_T L^2_{v,D}}\notag\\
&\quad +\Big\Vert \frac{\vert \nabla_x \vert}{\langle \nabla_x \rangle}  (a,b,c) \Big\Vert_{L^p_k L^2_T}.\notag
\end{align}
We then have proved the global existence and large-time behavior of solutions for the case of soft potentials. 
\end{proof}

\section{Appendix}
Here we give some auxiliary statements and proofs that have been postponed in the previous sections.

On local existence, it suffices to repeat the argument in \cite{DLSS}, which was motivated by \cite{AMUXY-2011-CMP} and \cite{MS-JDE-2016}. The key idea is that, since $L^1_k \cap L^p_k\subset L^1_k \cap L^\infty_k \subset L^2_k \simeq L^2_x$, we can work on $L^\infty_{T_0} L^2_{x,v}$ to construct a solution to an approximate equation via the Hahn-Banach extension theorem as in the same way of \cite{DLSS}. Compared to the argument in \cite{DLSS}, our case is rather simple because the estimates of derivatives is not necessary and there is no boundary. We do not give full proof.

\subsection{Proof of local existence}
In this subsection, we will show the following claim of local existence for the hard potential case and $p=\infty$. 
Let $X=L^1_k\cap L^\infty_k$. Under the same assumption as in Theorem \ref{thm: MainHard}, there exist $\epsilon>0$, $T_0>0$, and $C>0$ such that if $F_0(x,v)=\mu(v)+\mu(v)^{1/2} f_0 (x,v)\ge 0$ and
\begin{align*}
\Vert f\Vert_{X L^\infty_{T_0} L^2_v} + \Vert f\Vert_{X L^2_{T_0} L^2_{v,D}} \le \epsilon, 
\end{align*}
then \eqref{BE} admits a unique local solution
\begin{align*}
f(t,x,v),\ 0\le t\le T_0,\ x,\ v\in\mathbb{R}^3.
\end{align*}
This solution belongs to
\begin{align*}
L^1_k L^\infty_{T_0} L^2_v \cap L^\infty_k L^\infty_{T_0} L^2_v \cap L^1_k L^2_{T_0} L^2_{v,D} \cap L^\infty_k L^2_{T_0} L^2_{v,D}
\end{align*}
and satisfies
\begin{align}\label{ineq:EnergyLocalSol}
\Vert f\Vert_{X L^\infty_{T_0} L^2_v}+ \Vert f\Vert_{X L^2_{T_0} L^2_{v,D}}\le C \Vert f_0 \Vert_{X L^2_v}.
\end{align}
We remark that the same proof below applies to the soft potential case, where we have to define a solution space according to \eqref{eq:SoftL^1Energy}, \eqref{eq:SoftL^1Diss}, \eqref{eq:SoftL^pEnergy}, and \eqref{eq:SoftL^pDiss}.

We mainly follow \cite{DLSS}, while we also remark that the idea in \cite{DLSS} came from \cite{AMUXY-2011-CMP} and \cite{MS-JDE-2016}. 

First, we consider the linearized equation
\begin{align}\label{eq:linearised}
\begin{cases}
\partial_t f +v \cdot \nabla_x f + L_1  f  -\Gamma(g , f) = -L_2 g,\\
f(0,x,v)=f_0(x,v).
\end{cases}
\end{align}
Recall $L_1f=-\mu^{1/2}Q(\mu,\mu^{1/2} f)$ and $L_2f=-\mu^{1/2}Q(\mu^{1/2} f,\mu)$. They satisfy~\cite{AMUXY-2012-JFA}
\begin{align}
&(L_1f, f)_{L^2_v} \ge \delta_0 \Vert f\Vert_{L^2_{v,D}}-C \Vert f\Vert_{L^2_v}, \label{ineq:L1} \\
& \vert (L_2 f, g)_{L^2_v}\vert\le C \Vert \mu^{1/10^3} f \Vert_{L^2_v} \Vert \mu^{1/10^3} g \Vert_{L^2_v}\le C \Vert f \Vert_{L^2_{v,D}} \Vert g \Vert_{L^2_{v,D}}.
\label{ineq:L2}
\end{align} 
To prove that \eqref{eq:linearised} has a unique weak solution, we have to mollify the functions. Hence, applying the standard Friedrichs mollifier $\{ \chi_\varepsilon(x)\}_{0<\varepsilon<1}$, we solve
\begin{align}\label{eq: mollified}
\begin{cases}
\partial_t f_\varepsilon +v \cdot \nabla_x f_\varepsilon + L_1  f_\varepsilon  -\Gamma(g_\varepsilon , f_\varepsilon) = -L_2 g_\varepsilon,\\
f_\varepsilon(0,x,v)=f_{0, \varepsilon}(x,v),
\end{cases}
\end{align}
under the assumption
\begin{align*}
&g_\varepsilon \in XL^\infty_{T_0} L^2_v\cap X L^2_{T_0} L^2_{v,D},\quad f_{0,\varepsilon}\in XL^2_v,\\
&\Vert g_\varepsilon\Vert_{XL^\infty_{T_0}L^2_v} + \Vert g_\varepsilon\Vert_{XL^2_{T_0}L^2_{v,D}} \le \epsilon,
\end{align*}
where
\begin{align*}
g_\varepsilon= g*_x\chi_\varepsilon,\ f_{0,\varepsilon}=f_0 *_x \chi_\varepsilon.
\end{align*}

Since $L^1_k \cap L^\infty_k \subset L^2_k \simeq L^2_x$, we work on $L^\infty_{T_0} L^2_{x,v}$ to construct a weak solution via the Hahn-Banach theorem. 
Let us define 
\begin{align*}
\mathscr{Q}=-\partial_t + \Big(v\cdot \nabla_x +L_1 -\Gamma(g_\varepsilon,\cdot)\Big)^*,
\end{align*}
where $*$ denotes the conjugate operator with respect to the standard inner product over $L^2_{x,v}$. This operator is defined over
\begin{align*}
\mathbb{W}_1=\{ f\in H^1(0,T_0; \mathcal{S}(\mathbb{R}^6_{x,v}))\ |\ f(T_0,x,v)\equiv 0\},
\end{align*}
where $\mathcal{S}(\mathbb{R}^d)$ denotes the set of rapidly decreasing functions on $\mathbb{R}^d$.  Then it holds
\begin{align*}
\mathrm{Re} (\mathscr{Q} f_\varepsilon, f_\varepsilon)_{L^2_{x,v}} =-\frac{1}{2}\frac{d}{dt} \Vert f_\varepsilon\Vert_{L^2_{x,v}}^2 +(L_1 f_\varepsilon, f_\varepsilon)_{L^2_{x,v}} -\mathrm{Re}(\Gamma(g_\varepsilon, f_\varepsilon), f_\varepsilon)_{L^2_{x,v}}.
\end{align*}
By the trilinear estimate \eqref{ineq:Trilinear} it holds
\begin{align*}
\vert (\Gamma(g_\varepsilon, f_\varepsilon), f_\varepsilon)_{L^2_{x,v}} \vert
\le C \int_{\mathbb{R}^3_x} \Vert g_\varepsilon \Vert_{L^2_v} \Vert f_\varepsilon \Vert_{L^2_{v,D}}^2 dx 
\le C \Vert g_\varepsilon \Vert_{L^\infty_x L^2_v} \Vert f_\varepsilon\Vert_{L^2_x L^2_{v,D}}^2.
\end{align*}
Therefore if $\epsilon$ is sufficiently small, using \eqref{ineq:L1} we have
\begin{align}\label{ineq: local1}
&\frac{1}{2}\Vert f_\varepsilon (t) \Vert_{L^2_{x,v}} +(\delta_0 - C\epsilon) \int^{T_0}_t \Vert f_\varepsilon(t)\Vert_{L^2_x L^2_{v,D}} d\tau \notag\\ 
&\le \int^{T_0}_t \vert (\mathscr{Q}f_\varepsilon,f_\varepsilon)_{L^2_{x,v}}\vert dt+C \int^{T_0}_t \Vert f_\varepsilon \Vert_{L^2_x L^2_{v,D}} d\tau \notag\\
& \le (CT_0+\eta) \Vert f_\varepsilon \Vert_{L^\infty_{T_0} L^2_{x,v}}^2 +C\Vert \mathscr{Q} f_\varepsilon \Vert_{L^1_{T_0} L^2_{x,v}}^2.
\end{align}
This implies that $\mathscr{Q}$ is injective on $\mathbb{W}_1$.

Now we can define a map $\mathcal{M}$ over $\mathbb{W}_2$ as follows:
\begin{align*}
&\mathbb{W}_2 = \{ w=\mathscr{Q} f\ |\ f\in \mathbb{W}_1\} \subset L^1_{T_0} L^2_{x,v},\\
&\mathcal{M}: \mathbb{W}_2 \rightarrow \mathbb{C},\
w_\varepsilon=\mathscr{Q}f_\varepsilon\mapsto (f_\varepsilon(0), f_{0,\varepsilon})_{L^2_{x,v}}-\int^{T_0}_0 (L_2 g_\varepsilon, f_\varepsilon)_{L^2_{x,v}} dt.
\end{align*}
This map is well-defined, because $\mathscr{Q}: \mathbb{W}_1\rightarrow \mathbb{W}_2$ is a bijection. Also, $\mathcal{M}$ is bounded by
\begin{align*}
\vert \mathcal{M}(w_\varepsilon)\vert 
&\le  \Vert f_\varepsilon(0)\Vert_{L^2_{x,v}} \Vert f_{0,\varepsilon}\Vert_{L^2_{x,v}} +C\Vert g_\varepsilon \Vert_{L^\infty_{T_0} L^2_{x,v}}  \Vert  f_\varepsilon \Vert_{L^1_{T_0} L^2_{x,v}}\\
&\le C(T_0,\epsilon) ( \Vert f_{0,\varepsilon} \Vert_{L^2_{x,v}} +\Vert g_\varepsilon\Vert_{L^\infty_{T_0} L^2_{x,v}}) \Vert \mathscr{Q} f_\varepsilon\Vert_{L^\infty_{T_0} L^2_{x,v}}.
\end{align*}
By the Hahn-Banach theorem, $\mathcal{M}$ can be extended to a bounded linear functional on $L^1_{T_0}L^2_{x,v}$ and there exists $f_\varepsilon\in L^\infty_{T_0} L^2_{x,v}$ such that it holds
\begin{align*}
\mathcal{M}(w_\varepsilon)=\int^{T_0}_0 (f_\varepsilon (t), w_\varepsilon (t))_{L^2_{x,v}} dt,\ w_\varepsilon \in L^1_{T_0}L^2_{x,v},\\
\Vert f_\varepsilon\Vert_{L^\infty_{T_0} L^2_{x,v}} \le C(T_0,\epsilon) ( \Vert f_{0,\varepsilon} \Vert_{L^2_{x,v}} +\Vert g_\varepsilon\Vert_{L^\infty_{T_0} L^2_{x,v}}).
\end{align*}
This is a weak solution to \eqref{eq: mollified}. Indeed,
\begin{align*}
\mathcal{M}(\mathscr{Q}f_\varepsilon)=\int^{T_0}_0 (f_\varepsilon (t), \mathscr{Q}f_\varepsilon)_{L^2_{x,v}} dt=(f_\varepsilon(0), f_{0,\varepsilon})_{L^2_{x,v}}-\int^{T_0}_0 (L_2 g_\varepsilon, f_\varepsilon)_{L^2_{x,v}} dt.
\end{align*}
It is obvious to show $f_\varepsilon \in L^2_{T_0} L^2_x L^2_{v,D}$ by \eqref{ineq: local1}.

Next, we verify that
\begin{align}\label{convergence}
f_\varepsilon \rightarrow \exists f \in XL^\infty_{T_0}L^2_v \cap XL^2_{T_0} L^2_{v,D}
\end{align}
as $\varepsilon \rightarrow 0$. By the energy estimate, we deduce
\begin{align*}
&\Vert \hat{f}_\varepsilon(t)\Vert_{L^2_v} +\sqrt{\delta_0} \Vert \hat{f}_\varepsilon(t)\Vert_{L^2_{T_0}L^2_{v,D}} \\
&\le \Vert \hat{f}_{0,\varepsilon}\Vert_{L^2_v}
+C \Vert \hat{f}_\varepsilon(t)\Vert_{L^2_{T_0}L^2_v}\\
&+\Big(\int^{T_0}_0 \vert (L_2 \hat{f}_\varepsilon, \hat{g}_\varepsilon)\vert dt\Big)^{1/2}
+\Big(\int^{T_0}_0 \vert (\hat{\Gamma}(\hat{g}_\varepsilon, \hat{f}_\varepsilon), \hat{f}_\varepsilon)\vert dt\Big)^{1/2}.
\end{align*}
Since we have
\begin{align*}
\Big\Vert \Big(\int^{T_0}_0 \vert (L_2 \hat{g}_\varepsilon, \hat{f}_\varepsilon)\vert dt\Big)^{1/2} \Big\Vert_X
&\le \eta \Vert f_\varepsilon\Vert_{X L^2_{T_0} L^2_{v,D}} +C_\eta \Vert g_\varepsilon\Vert_{X L^2_{T_0} L^2_{v,D}},\\ 
\Big\Vert \Big(\int^{T_0}_0 \vert (\hat{\Gamma}(\hat{g}_\varepsilon, \hat{f}_\varepsilon), \hat{f}_\varepsilon)\vert dt\Big)^{1/2} \Big\Vert_X
&\le \eta \Vert f_\varepsilon \Vert_{X L^2_{T_0} L^2_{v,D}}\\
&+C_\eta  \Vert g_\varepsilon \Vert_{L^1_k L^2_{T_0}  L^2_{v,D}} \Vert f_\varepsilon \Vert_{X L^2_{T_0} L^2_{v,D}}
\end{align*}
by \eqref{ineq:Trilinear} and \eqref{ineq:L2},
the substitution of these estimates yields
\begin{align*}
&\Vert f_\varepsilon \Vert_{XL^\infty_{T_0} L^2_v}
+\sqrt{\delta_0} \Vert f_\varepsilon \Vert_{XL^2_{T_0} L^2_{v,D}}\\
&\le \Vert f_{0,\varepsilon} \Vert_{X L^2_v}
+\sqrt{T_0} \Vert f_\varepsilon \Vert_{X L^\infty_{T_0} L^2_v}
+\eta \Vert f_\varepsilon \Vert_{X L^2_{T_0} L^2_{v,D}} 
+C_\eta \Vert g_\varepsilon \Vert_{X L^2_{T_0} L^2_{v,D}}\\
&+C_\eta \Vert g_\varepsilon \Vert_{L^1_k L^2_{T_0}  L^2_{v,D}} \Vert f_\varepsilon \Vert_{X L^2_{T_0} L^2_{v,D}}.
\end{align*}
Hence, if $T_0>0$ and $\epsilon>0$ are sufficiently small, we have the estimate
\begin{align*}
\Vert f_\varepsilon \Vert_{XL^\infty_{T_0} L^2_v}
+\lambda \Vert f_\varepsilon \Vert_{XL^2_{T_0} L^2_{v,D}}
\le C \Vert f_{0} \Vert_{X L^2_v}
+ C \Vert g\Vert_{X L^2_{T_0} L^2_{v,D}}
\end{align*}
uniform in $0<\varepsilon<1$.
Notice we have used $\Vert f_\varepsilon \Vert_X \le \Vert f \Vert_X$ uniform in $\varepsilon$. 
This gives \eqref{convergence}.

Finally, we inductively define $\{f_n\}_{n\in \mathbb{N}}$ so as $f_n$ is a weak solution to
\begin{align*}
\begin{cases}
\partial_t f_n +v \cdot \nabla_x f_n + L_1  f_n  -\Gamma(f_{n-1} , f_n) = -L_2 f_{n-1},\\
f_n(0,x,v)=f_0(x,v).
\end{cases}
\end{align*}
By the aforementioned argument, $f_n\rightarrow \exists f$ in $XL^\infty_{T_0} L^2_v \cap XL^2_{T_0} L^2_{v,D}$ if $f_0$ is sufficiently small in $X$. This completes the proof of local existence.

\subsection{Proof of non-negativity}
In this subsection, we will prove the non-negativity of the local solution we have obtained in the previous subsection.

The method of the proof is essentially the same as that of \cite[Proposition 5.1]{AMUXY-ARMA-2011}, however, as it is noted in \cite{MS-JDE-2016}, we use \cite[Corollary 6.20]{MS-JDE-2016} instead of \cite[Lemma 4.9]{AMUXY-ARMA-2011} to handle $L^2_{v,D}$-norms so that we can avoid using the Sobolev norms.  For self-containedness, we reproduce it.  

If $\{f_n\}$ is the sequence of approximate solutions in the previous proof, and if $F_n = \mu + \mu^{1/2}f_n$, then 
$\{F_n\}$ is constructed successively by the following linear Cauchy problem
\begin{align*}
\begin{cases}
\partial_t F_{n+1} + v\,\cdot\,\nabla_x
F_{n+1} =Q (F_n, F_{n+1}), \\ 
F_{n+1}|_{t=0} = F_0 =\mu + \mu^{1/2}
 f_0\geq 0\, ,\ n\in\mathbb{N}.
\end{cases}
\end{align*}
Hence the non-negativity of the solution to the original Cauchy problem \eqref{eq: Boltzmann} comes  from the following induction argument: 
Suppose  that
\begin{align}\label{4.4.1+100}
F_n = \mu + \mu^{1/2} f_n \geq 0
\end{align}
for some $n\in\mathbb{N}$. We will show that then  (\ref{4.4.1+100}) is true
for $n+1$.

If we put $\tilde F_n = \mu^{-1/2}F_n = \mu^{1/2} + f_n$ then $\tilde F_n$ satisfies 
\begin{align}\label{4.4.3-bis}
\begin{cases}
\partial_t \tilde F_{n+1} + v\,\cdot\,\nabla_x
\tilde F_{n+1} =\Gamma (\tilde F_n, \tilde F_{n+1}), \\
\tilde F_{n+1}|_{t=0} =\tilde F_0 = \mu^{1/2} + 
 f_0\geq 0\, ,\ n\in\mathbb{N}.
\end{cases}
\end{align}
Since $L^1_k L^2_T L^2_{v,D} \subset L^\infty_x L^2_T L^2_{v,D} \subset L^2_T L^\infty_x L^2_{v,D},$
it holds $\int_0^T \Vert \tilde{F}_n\Vert^2_{L^\infty_xL^2_{v,D}} dt < \infty$,
and hence, if $\tilde F_n^{\pm} = \pm \max(\pm \tilde F_n, 0)$ then both
$\int_0^T \Vert \tilde{F}_n^\pm\Vert^2_{L^\infty_xL^2_{v,D}} dt$ are also finite
by means of the same argument as in the proof of \cite[Proposition 5.1]{AMUXY-ARMA-2011}. 
Take the convex function $\beta (s) = \frac 1 2 (s^- )^2= \frac 1 2
s\,(s^- )  $ with $s^-=\min\{s, 0\}$.  Let $\varphi(v,x) = (1+|v|^2 +|x|^2)^{\alpha/2}$ with $\alpha >3/2$,
and notice that
\begin{align*}
\beta_s (\tilde F_{n+1}) \varphi(v,x)^{-1}&: =\left(\frac{d}{ds}\,\,\beta
\right)(\tilde F_{n+1}) \varphi(v,x)^{-1}\\
&={\tilde F^{ -}_{n+1}} \varphi(v,x)^{-1}\in
L^\infty_T L^2_{x,v}.
\end{align*}

Multiply the first equation of \eqref{4.4.3-bis}
by $\beta_s (\tilde F_{n+1})\varphi(v,x)^{-2}$ $
= \tilde F^{-}_{n+1}\varphi(v,x)^{-2}$ and integrate over
$[0,t] \times \mathbb{R}^6_{x,v}$, ($t \in (0,T]$). Then, since $\beta(F_{n+1}(0)) =  (\tilde F_{0}^-)^2/2 =0$, we have
\begin{align*}
&\int_{\mathbb{R}^6} \beta ( \tilde F_{n+1}(t)) \varphi(v,x)^{-2}dxdv
\\
&\qquad =\int_0^t \int_{\mathbb{R}^6}
\Gamma(\tilde F_n(\tau),\, \tilde F_{n+1}(\tau) )\,\, \beta_s(\tilde F_{n+1}(\tau)) \varphi(v,x)^{-2} \,\,dxdvd\tau \\
&\qquad\qquad-\int_0^t \int_{\mathbb{R}^6} { v\,\cdot\, \nabla_x \enskip  (  \beta
(\tilde F_{n+1}(\tau)) \varphi(v,x)^{-2}) }dxdv d\tau\\
&\qquad\qquad + \int_0^t \int_{\mathbb{R}^6} {\big (\varphi(v,x)^{2}\,\,v\, \cdot\,
\nabla_x \,\varphi(v,x)^{-2} \big) }\enskip  \beta (\tilde F_{n+1}(\tau))\varphi(v,x)^{-2}dxdvd\tau, 
\end{align*}
where the first term on the right-hand side is well-defined because 
\begin{align*}
\int_0^T \Vert \tilde{F}_{n+1}\Vert^2_{L^\infty_xL^2_{v,D}} dt,\quad
\int_0^T \Vert \tilde{F}_{n+1}^-\Vert^2_{L^\infty_xL^2_{v,D}} dt < \infty.
\end{align*}
Since the second term vanishes
and $|v\, \cdot\, \nabla_x\, \varphi(v,x)^{-2} | \leq C \varphi(v,x)^{-2}$,
we obtain
\begin{align*}
&\int_{\mathbb{R}^6} \beta ( \tilde F_{n+1}(t)) \varphi(v,x)^{-2}dxdv
\\
&\qquad \le \int_0^t \Big(\int_{\mathbb{R}^6}
\Gamma(\tilde F_n(\tau),\, \tilde F_{n+1}(\tau) )\,\, \beta_s(\tilde F_{n+1}(\tau)) \varphi(v,x)^{-2} \,\,dxdv\Big)d\tau \\
&\qquad\qquad \qquad \qquad \qquad +C \int_0^t 
\int_{\mathbb{R}^6} \beta ( \tilde F_{n+1}(\tau)) \varphi(v,x)^{-2}dxdvd\tau.
\end{align*}
The integrand $(\cdot)$ of the  first term on the right-hand side is equal to
\begin{align*}
&\int_{\mathbb{R}^6}  \Gamma(\tilde F_n,  \tilde F_{n+1}^- ) \tilde F_{n+1}^- \varphi(v,x)^{-2} dxdv\\
&\qquad\qquad\qquad + \int   B \, \mu^{1/2}(u) \tilde F_n(u') \tilde F_{n+1}^+(v') \tilde 
 F_{n+1}^- \varphi(v,x)^{-2}dvdu d\sigma dx \\
 &=A_1 + A_2.
\end{align*}
From the induction hypothesis, the second term $A_2$ is non-positive.

On the other hand, we have
\begin{align*}
&A_1 = \int( \Gamma  (\tilde F_n, \varphi(v,x)^{-1}\tilde  F_{n+1}^-),  \varphi(v,x)^{-1} \tilde F_{n+1}^-)_{L^2(\mathbb{R}_v^3)} dx + R
\\
&= - \int (L_1(\varphi(v,x)^{-1}\tilde  F_{n+1}^-), \varphi(v,x)^{-1}\tilde  F_{n+1}^-)_{L^2(\mathbb{R}_v^3)} dx\\ 
 & + \int( \Gamma  ( f^n, \varphi(v,x)^{-1}\tilde  F_{n+1}^-),  \varphi(v,x)^{-1} F_{n+1}^-)_{L^2(\mathbb{R}_v^3)} dx +R,
\end{align*}
where $R$ is a remainder term. It follows from \cite[Corollary 6.20]{MS-JDE-2016}
that
\begin{align*}
\int_0^t |R| d\tau \leq&  \eta \int_0^t \int_{\mathbb{R}_x^3}
\Vert \varphi(v,x)^{-1}\tilde  F_{n+1}^-(\tau) \Vert_{L^2_{v,D}}^2 dx d\tau  \\
&+  C_\eta \int_0^t \int_{\mathbb{R}^6} \beta ( \tilde F_{n+1}(\tau)) \varphi(v,x)^{-2}dxdvd\tau.
\end{align*}
Moreover, by means of \eqref{ineq:L1} and \cite[Theorem 1.2]{AMUXY-KRM-2013} with \eqref{ineq:EnergyLocalSol}, we obtain
\begin{align*}
\int_0^t A_1 d\tau 
&\le -(\delta_0 - C(\eta + \epsilon))
\int_0^t \int_{\mathbb{R}_x^3}
\Vert \varphi(v,x)^{-1}\tilde  F_{-}^{n+1}(\tau) \Vert_{L^2_{v,D}}^2 dx  d\tau \\
&+  C_\eta \int_0^t \int_{\mathbb{R}^6} \beta ( \tilde F_{n+1}(\tau)) \varphi(v,x)^{-2}dxdvd\tau.
\end{align*}
Finally, we get 
\begin{align*}
\int_{\mathbb{R}^6} \beta ( \tilde F_{n+1}(t)) \varphi(v,x)^{-2}dxdv
\lesssim  \int_0^t 
\int_{\mathbb{R}^6} \beta ( \tilde F_{n+1}(\tau)) \varphi(v,x)^{-2}dxdvd\tau,
\end{align*}
which implies that 
$\tilde F_{n+1}(t,x,v) \ge 0$ for $(t,x,v) \in [0,T]\times \mathbb{R}^6$ by the Gronwall inequality.

\subsection{Proof of Lemma \ref{lem: macro a priori}}

By taking the following velocity moments
\begin{align*}
\mu^{\frac{1}{2}}, 
v_j\mu^{\frac{1}{2}}, 
\frac{1}{6}(|v|^2-3)\mu^{\frac{1}{2}},
(v_j{v_m}-1)\mu^{\frac{1}{2}}, 
\frac{1}{10}(|v|^2-5)v_j \mu^{\frac{1}{2}}
\end{align*}
with {$1\leq j,m\leq 3$} for the first equation of \eqref{eq: linearized BE}, one sees that the coefficient functions $[a,b,c]=[a,b,c](t,x)$ satisfy the fluid-type system
\begin{align}\label{mac.law}
\begin{cases}
\partial_t a +\nabla_x b=0,\\
\partial_t b +\nabla_x (a+2c)+\nabla_x\cdot \Theta ((\mathbf{I}-\mathbf{P}) f)=0,\\
\partial_t c +\frac{1}{3}\nabla_x\cdot b +\frac{1}{6}\nabla_x\cdot \Lambda ((\mathbf{I}-\mathbf{P}) f)=0,\\
\partial_t[\Theta_{jm}((\mathbf{I}-\mathbf{P}) f)+2c\delta_{ jm}]+\partial_j b_m+\partial_m b_j=\Theta_{jm}(\mathbbm{r}+\mathbbm{h}),\\
\partial_t \Lambda_j((\mathbf{I}-\mathbf{P}) f)+\partial_j c = \Lambda_j(\mathbbm{r}+\mathbbm{h}),
\end{cases}
\end{align}
where $\Theta_{jm} (f)=((v_jv_m-1)\mu^{1/2},f)$, $\Theta(f)=(\Theta_{jm}(f))_{1\le j,m\le 3}$ and $\Lambda_i(f)=\frac{1}{10}((\vert v\vert^2-5)v_j \mu^{1/2},f)$, $\Lambda(f)=(\Lambda_j (f))_{1\le j\le 3}$ are higher-order moment functions, and
\begin{align*}
\mathbbm{r}= -v\cdot \nabla_x (\mathbf{I}-\mathbf{P})f,\ \mathbbm{h}=-L (\mathbf{I}-\mathbf{P})f+H.
\end{align*}
Notice 
\begin{align*}
\vert \Theta(f)\vert, \vert \Lambda (f)\vert\le \Vert f\Vert_{L^2_v}, \Vert f\Vert_{L^2_{v,D}},
\end{align*}
which can be shown by the method used in the proof of Lemma  \ref{lem: inner product L}.

The Fourier transform of \eqref{mac.law} is
\begin{align}\label{mac.law.fourier}
\begin{cases}
\partial_t \hat{a} +ik\cdot \hat{b}=0,\\
\partial_t \hat{b} +ik  (\hat{a}+2\hat{c})+ik\cdot \Theta ((\mathbf{I}-\mathbf{P}) \hat{f})=0,\\
\partial_t \hat{c} +\frac{1}{3}ik\cdot \hat{b} +\frac{1}{6}ik \cdot \Lambda ((\mathbf{I}-\mathbf{P}) \hat{f})=0,\\
\partial_t[\Theta_{jm}((\mathbf{I}-\mathbf{P}) \hat{f})+2\hat{c}\delta_{ jm}]+ik_j \hat{b}_m+ik_m \hat{b}_j=\Theta_{jm}(\hat{\mathbbm{r}}+\hat{\mathbbm{h}}),\\
\partial_t \Lambda_j((\mathbf{I}-\mathbf{P}) \hat{f})+ik_j \hat{c} = \Lambda_j(\hat{\mathbbm{r}}+\hat{\mathbbm{h}}).
\end{cases}
\end{align}
Hereafter \eqref{mac.law.fourier}$_n$ denotes the $n$-th equation of \eqref{mac.law.fourier}.
We first multiply the complex conjugate of $ik_j \hat{c}/(1+\vert k\vert^2)$ to \eqref{mac.law.fourier}$_5$ to have
\begin{align}
\Big(\partial_t \Lambda_j ((\mathbf{I}-\mathbf{P})\hat{f}), \frac{ik_j \hat{c}}{1+\vert k\vert^2} \Big)+\frac{k_j^2}{1+\vert k\vert^2} \vert \hat{c}\vert^2 =\Big(\Lambda_j (\hat{\mathbbm{r}}+\hat{\mathbbm{h}}), \frac{ik_j \hat{c}}{1+\vert k\vert^2} \Big).\label{ap.adp1}
\end{align}
We substitute \eqref{mac.law.fourier}$_3$ to this equation, which yields
\begin{align*}
\partial_t \Big(\Lambda_j ((\mathbf{I}-\mathbf{P})\hat{f}), \frac{ik_j \hat{c}}{1+\vert k\vert^2}\Big)
&-\Big(\Lambda_j ((\mathbf{I}-\mathbf{P})\hat{f}), \frac{k_j}{1+\vert k\vert^2}\Big[ \frac{1}{3}k\cdot \hat{b}+\frac{1}{6}k\cdot \Lambda((\mathbf{I}-\mathbf{P})\hat{f})\Big]\Big)\\
&  +\frac{k_j^2}{1+\vert k\vert^2} \vert \hat{c}\vert^2 =\Big(\Lambda_j (\hat{\mathbbm{r}}+\hat{\mathbbm{h}}), \frac{ik_j \hat{c}}{1+\vert k\vert^2}\Big).
\end{align*}
Integrating this over $[0,T]$ gives
\begin{align*}
\int^T_0 \frac{k_j^2}{1+\vert k\vert^2}\vert \hat{c}\vert^2 dt 
& \le \Big[\vert \Lambda_j ((\mathbf{I}-\mathbf{P})\hat{f}) \vert^2 + \frac{k_j^2}{(1+\vert k\vert^2)^2}\vert \hat{c}\vert^2\Big]^T_{t=0}\\
&+\varepsilon_1^2\int^T_0 \frac{\vert k\vert^2}{1+\vert k\vert^2}\vert \hat{b}\vert^2dt +\frac{1}{4\varepsilon_1^2}\int^T_0 \frac{\vert \Lambda_j((\mathbf{I}-\mathbf{P})\hat{f})\vert^2}{1+\vert k\vert^2}dt\\
&+\int^T_0 \frac{\vert k\vert^2}{1+\vert k\vert^2}\vert \Lambda ((\mathbf{I}-\mathbf{P})\hat{f})\vert^2dt \\
&+ \frac{1}{4\varepsilon_1^2}\int^T_0 \frac{\vert \Lambda_j(\hat{\mathbbm{r}}+\hat{\mathbbm{h}})\vert^2}{1+\vert k\vert^2} dt +\varepsilon_1^2\int^T_0 \frac{\vert k\vert^2}{1+\vert k\vert^2}\vert \hat{c}\vert^2 dt,
\end{align*}
where $\varepsilon_1>0$ is a small constant chosen later.
Taking the summation $j=1$, $2$, $3$ then the square root of the resultant, we have
\begin{align}\label{macro.c}
&\frac{\vert k\vert }{\sqrt{1+\vert k\vert^2}} \Big(\int^T_0 \vert \hat{c}\vert^2 dt\Big)^{1/2} 
 \le C\Vert \hat{f}_0(k)\Vert_{L^2_v}+ C\Vert  (\mathbf{I}-\mathbf{P})\hat{f} \Vert_{L^\infty_T L^2_v} + \frac{\vert k\vert}{1+\vert k\vert^2}\sup_{0\le t \le T} \vert \hat{c}\vert  \notag \\
&+C\varepsilon_1 \frac{\vert k\vert}{\sqrt{1+\vert k\vert^2}} \Big( \int^T_0 \vert \hat{b}\vert^2dt\Big)^{1/2} +\frac{C}{\varepsilon_1} \Big(\int^T_0 \frac{\vert \Lambda((\mathbf{I}-\mathbf{P})\hat{f})\vert^2}{1+\vert k\vert^2}dt \Big)^{1/2} \notag \\
&+C\frac{\vert k\vert}{\sqrt{1+\vert k\vert^2}} \Vert (\mathbf{I}-\mathbf{P})\hat{f}\Vert_{L^2_T L^2_{v,D}}  \notag \\
&+ \frac{C}{\varepsilon_1}  \Big( \int^T_0 \frac{\vert \Lambda(\hat{\mathbbm{r}}+\hat{\mathbbm{h}})\vert^2}{1+\vert k\vert^2} dt\Big)^{1/2} + C\varepsilon_1 \frac{\vert k\vert}{\sqrt{1+\vert k\vert^2}} \Big(\int^T_0 \vert \hat{c}\vert^2 dt\Big)^{1/2}.
\end{align}

Next, we consider $\hat{b}$. We multiply the complex conjugate of $(ik_j \hat{b}_m+ ik_m \hat{b}_j)/(1+\vert k\vert^2)$ to \eqref{mac.law.fourier}$_{4}$  to have
\begin{align*}
&\Big(\partial_t [\Theta_{jm} ((\mathbf{I}-\mathbf{P})\hat{f})+2\hat{c} \delta_{jm}], \frac{i}{1+\vert k\vert^2}(k_j \hat{b}_m +k_m \hat{b}_j)\Big) 
+\frac{\vert k_j \hat{b}_m +k_m \hat{b}_j\vert^2}{1+\vert k\vert^2}\\
&=\Big(\Theta(\hat{\mathbbm{r}}+\hat{\mathbbm{h}}), \frac{i}{1+\vert k\vert^2}(k_j \hat{b}_m +k_m \hat{b}_j)\Big).
\end{align*}
The substitution of  \eqref{mac.law.fourier}$_2$ gives
\begin{align*}
&\partial_t \Big( \Theta_{jm} ((\mathbf{I}-\mathbf{P})\hat{f}) +2\hat{c} \delta_{jm}, \frac{i}{1+\vert k\vert^2}(k_j \hat{b}_m +k_m \hat{b}_j)\Big)\\
& -\Big(\Theta_{jm}((\mathbf{I}-\mathbf{P})\hat{f})+2\hat{c}\delta_{jm}, \frac{1}{1+\vert k\vert^2}k_jk_m (\hat{a}+2\hat{c})\\
&+\frac{1}{1+\vert k\vert^2}[k_j\{k\cdot \Theta((\mathbf{I}-\mathbf{P})\hat{f})\}_m+k_m\{k\cdot \Theta((\mathbf{I}-\mathbf{P})\hat{f})\}_j]\Big) 
+\frac{\vert k_j \hat{b}_m +k_m \hat{b}_j\vert^2}{1+\vert k\vert^2}\\
&=\Big(\Theta(\hat{\mathbbm{r}}+\hat{\mathbbm{h}}), \frac{i}{1+\vert k\vert^2}(k_j \hat{b}_m +k_m \hat{b}_j)\Big).
\end{align*}
By the same procedure to derive \eqref{macro.c}, one may have
\begin{align}\label{macro.b}
&\frac{\vert k\vert}{\sqrt{1+\vert k\vert^2}}\Big(\int^T_0 \vert \hat{b}\vert^2dt \Big)^{1/2} 
\le C\Vert \hat{f}_0(k)\Vert_{L^2_v}+ C\Vert  (\mathbf{I}-\mathbf{P})\hat{f} \Vert_{L^\infty_T L^2_v} +\sup_t \vert \hat{c}\vert   \notag   \\
&+ \frac{\vert k\vert}{\sqrt{1+\vert k\vert^2}}\sup_t \vert \hat{b}\vert+C\frac{\vert k\vert}{\sqrt{1+\vert k\vert^2}} \Big( \Vert (\mathbf{I}-\mathbf{P})\hat{f}\Vert_{L^2_{v,D}}^2 dt\Big)^{1/2}   \notag    \\
&+\frac{C}{\varepsilon_2}\frac{1}{\sqrt{1+\vert k\vert^2}} \Big(\int^T_0 \Vert (\mathbf{I}-\mathbf{P})\hat{f}\Vert_{L^2_{v,D}}^2dt \Big)^{1/2}
+ C \varepsilon_2 \frac{\vert k\vert}{\sqrt{1+\vert k\vert^2}} \Big(\int^T_0 \vert \hat{a}\vert^2 dt\Big)^{1/2}  \notag \\
&+ (C+\varepsilon_2+ \varepsilon_2^{-1}) \frac{\vert k\vert}{\sqrt{1+\vert k\vert^2}} \Big(\int^T_0 \vert \hat{c}\vert^2dt \Big)^{1/2} \notag  \\
&+ \frac{C}{\varepsilon_2}\frac{1}{\sqrt{1+\vert k\vert^2}} \Big(\int^T_0 \vert \Theta(\hat{\mathbbm{r}}+\hat{\mathbbm{h}})\vert^2 dt \Big)^{1/2}+C\varepsilon_2 \frac{\vert k\vert}{\sqrt{1+\vert k\vert^2}}\Big(\int^T_0 \vert \hat{b}\vert^2dt \Big)^{1/2},
\end{align}
where $\varepsilon_2>0$ will be chosen later.

Third, we will consider the estimate of $\hat{a}$. \eqref{mac.law.fourier}$_2$ and \eqref{mac.law.fourier}$_1$ give
\begin{align*}
\partial_t \Big(\hat{b}, \frac{ik\hat{a}}{1+\vert k\vert^2}\Big) -\Big(\hat{b}, \frac{k\otimes k \hat{b}}{1+\vert k\vert^2}\Big) +\frac{\vert k\vert^2}{1+\vert k\vert^2}(\vert \hat{a}\vert^2+2(\hat{c},\hat{a}))
+\frac{\vert k\vert^2}{1+\vert k\vert^2}(\Theta((\mathbf{I}-\mathbf{P})\hat{f}, \hat{a})\\
=0,
\end{align*}
thus
\begin{align}\label{macro.a}
\frac{\vert k\vert}{\sqrt{1+\vert k\vert^2}}\Big( \int^T_0 \vert \hat{a}\vert^2dt\Big)^{1/2} 
&\le C\Vert \hat{f}_0(k)\Vert_{L^2_v} + \sup_t \vert \hat{b}\vert +\frac{\vert k\vert}{1+\vert k\vert^2} \sup_t \vert \hat{a}\vert  \notag \\
&+ C\frac{\vert k\vert}{\sqrt{1+\vert k\vert^2}} \Big(\int^T_0 \vert\hat{b}\vert^2+\vert \hat{c}\vert^2 dt \Big)^{1/2} \notag \\
& +C \frac{\vert k\vert}{\sqrt{1+\vert k\vert^2}} \Big(\int^T_0 \Vert (\mathbf{I}-\mathbf{P})\hat{f}\Vert^2_{L^2_{v,D}} dt\Big)^{1/2}.
\end{align}

Now, taking the summation \eqref{macro.c}$+\delta_2$\eqref{macro.b}$+\delta_3$\eqref{macro.a} with $0< \delta_3\ll \delta_2\ll 1$ then taking $\varepsilon_1$ and $\varepsilon_2$ sufficiently small give
\begin{align*}
\sup_k \frac{\vert k\vert }{\sqrt{1+\vert k\vert^2}} \Big(\int^T_0 \vert (\hat{a},\hat{b},\hat{c})\vert^2 dt\Big)^{1/2} 
&\le C\Vert f_0\Vert_{L^\infty_k L^2_v} +C \Vert f\Vert_{L^\infty_k L^\infty_T L^2_v} \\
&+C \sup_k \Big(\int^T_0 \Vert (\mathbf{I}-\mathbf{P})\hat{f}\Vert_{L^2_{v,D}}^2 dt\Big)^{1/2} \\
&+C\sup_k   \Big(\int^T_0 \frac{\vert \Lambda(\hat{\mathbbm{r}}+\hat{\mathbbm{h}})\vert^2}{1+\vert k\vert^2}dt \Big)^{1/2}\\
&+ C\sup_k  \Big(\int^T_0 \frac{\vert \Theta(\hat{\mathbbm{r}}+\hat{\mathbbm{h}})\vert^2}{1+\vert k\vert^2}dt \Big)^{1/2}.
\end{align*}
By Lemma \ref{lem: inner product L}, it is easy to derive
\begin{align*}
&\sup_k\Big[\Big(\int^T_0 \frac{\vert \Lambda(\hat{\mathbbm{r}}+\hat{\mathbbm{h}})\vert^2}{1+\vert k\vert^2}dt \Big)^{1/2},
\Big(\int^T_0 \frac{\vert \Theta(\hat{\mathbbm{r}}+\hat{\mathbbm{h}})\vert^2}{1+\vert k\vert^2}dt \Big)^{1/2} \Big] \\
&\le C \Vert (\mathbf{I}-\mathbf{P})f\Vert_{L^\infty_k L^2_T L^2_{v,D}}+\sup_k \Big(\int^T_0 \frac{\vert (\hat{H}, \mu^{1/4})_{L^2_v})\vert^2}{1+\vert k\vert^2}dt \Big)^{1/2}.
\end{align*}
This finally gives the desired estimate.\qed

\subsection*{Data Availability Statement}
All data generated or analysed during this study are included in this published article.

\subsection*{Acknowledgements}
RJD is supported by the General Research Fund (Project No.~14301720) from RGC of Hong Kong. 
SS is supported by JSPS Kakenhi Grant (No.\ 20K14338).
YU is supported by JSPS Kakenhi Grant (No.\ 21K03327).

\frenchspacing
\bibliographystyle{cpam}

\end{document}